\tikzset{font=\small}
\newtheorem{theorem}{Theorem} 
\newtheorem{lemma}[theorem]{Lemma}
\newtheorem{corollary}[theorem]{Corollary}
\newtheorem{proposition}[theorem]{Proposition}
\newtheorem{example}[theorem]{Example}
\newtheorem{remark}[theorem]{Remark}
\numberwithin{equation}{section}
\DeclareRobustCommand{\stirling}{\genfrac\{\}{0pt}{}}
\title{Free skew Boolean intersection algebras and set partitions}
\author{Ganna Kudryavtseva}
\address{G. Kudryavtseva: University of Ljubljana,
Faculty of Civil and Geodetic Engineering, Jamova cesta~2, SI-1000 Ljubljana, Slovenia / Institute of Mathematics, Physics and Mechanics, Jadranska ulica 19, SI-1000 Ljubljana, Slovenia /Jo\v zef  Stefan Institute,
Jamova cesta 39, SI-1000 Ljubljana, Slovenia}
\email{ganna.kudryavtseva\symbol{64}fgg.uni-lj.si, ganna.kudryavtseva\symbol{64}imfm.si, ganna.kudryavtseva\symbol{64}ijs.si}
\thanks{The author was partially supported by  ARRS grant P1-0288.}
\begin{document}

\begin{abstract} We show that atoms of the $n$-generated free left-handed skew Boolean intersection algebra are in a bijective correspondence with pointed partitions of non-empty subsets of $\{1,2,\dots, n\}$.   Furthermore, under the canonical inclusion into the $k$-generated free algebra, where $k\geq n$, an atom of the $n$-generated free algebra decomposes into an orthogonal join of atoms of the $k$-generated free algebra in an agreement with the containment relation on the respective partitions.  As a consequence of these results, we describe the structure of finite free left-handed skew Boolean intersection algebras and express several their combinatorial characteristics in terms of Bell numbers and Stirling numbers of the second kind.  We also look at the infinite case. For countably many generators, our constructions lead to the `partition analogue' of the Cantor tree whose boundary is the `partition variant' of the Cantor set. \end{abstract}

\maketitle

\section{Introduction}\label{s0:introduction}

{\em Skew Boolean intersection algebras} (SBIAs) are non-commutative variants of generalized Boolean algebras (GBAs). These are algebras $(S; \wedge,\vee,\setminus, \sqcap, 0)$ of signature $(2,2,2,2,0)$. The operations $\wedge$, $\vee$ and $\setminus$ are  variants of the meet, join and difference operations in a generalized Boolean algebra, respectively. In general, both $\wedge$ and $\vee$ are non-commutative. The operation $\sqcap$ is the commutative {\em intersection} operation, which is another generalization of the meet operation tied to the natural partial order underlying the algebra. When an SBIA is commutative (meaning that both $\wedge$ and $\vee$ are commutative), the operations $\wedge$ and $\sqcap$ coincide and each becomes the usual meet operation.

Skew Boolean intersection algebras were first extensively studied by Bignall and Leech in \cite{BL}, where their close connection with discriminator varieties of universal algebra was established. Removing the intersection operation from the signature of an SBIA turns it into a {\em skew Boolean algebra} called the {\em SBA reduct} of the initial SBIA. Skew Boolean algebras (SBAs) are another non-commutative variants of generalized Boolean algebras (see Leech \cite{L2,L3}). Each such an algebra possesses a natural underlying partial order. If binary meets exist for this partial order, such a meet of $a$ and $b$ is called the {\em intersection} of $a$ and $b$, and the SBA is said to have {\em  intersections}. Upon adding the intersection operation to the signature of the SBA, it is turned into an SBIA.

Skew Boolean intersection algebras are natural and frequently appearing objects. Any finite SBA (or, more generally, any SBA whose maximal commutative quotient is finite) possesses intersections and can thus be looked at as an SBIA. Moreover, under the non-commutative Stone duality \cite{Kud} between left-handed skew Boolean algebras and \'etale spaces over locally compact Boolean spaces, SBAs possessing intersections can be characterized precisely as those for which the dual \'etale space is Hausdorff (see also \cite{BCV, KudLaw}).  The relationship of skew Boolean intersection algebras and another class of algebras axiomatizing override and update operations on functions was studied by Cvenko-Vah, Leech and Spinks in \cite{CVLS}. Cvetko-Vah and Salibra \cite{CVS} revealed  the connection between skew Boolean intersection algebras and Church algebras.  Varieties of skew Boolean intersection algebras have been  studied by Leech and Spinks in \cite{LS}.

Free skew Boolean algebras have been recently systematically studied by Leech and the author in \cite{KL}. There it was shown that a free SBA over $X$ can be looked at as a certain `upgrade' of the free GBA over $X$.
In particular, in the case where the generating set $X$ is finite of cardinality $n$, the atoms of the free left-handed SBA over $X$ are in a bijective correspondence with {\em pointed} non-empty subsets of $X$, that is, pairs $(A,a)$ where $A$ is a non-empty subset of $X$ and $a\in A$. It is shown in \cite{KL}  that for any generating set $X$, the free SBA over $X$ has intersections, however it is not free as an SBIA, as the intersection of any two generators equals $0$. This raises the question to study the structure of free SBIAs.

In the present paper we  demonstrate that the structure of free left-handed SBIAs upgrades that of the `partition analogues' of free GBAs. 
In particular, atoms of the free left-handed SBIA over a finite set $X$ are in a bijective correspondence with {\em pointed partitions} of non-empty subsets of $X$. On the set of all pairs $(Y,\alpha)$ where $Y$ is a non-empty subset of $X$ and $\alpha$ is a partition of a non-empty subset of $Y$, there is a natural {\em containment order} given by  $(Y,\alpha)\preceq (Z,\beta)$  if ${\mathrm{dom}}(\alpha)\subseteq {\mathrm{dom}}(\beta)$, $Y\setminus {\mathrm{dom}}(\alpha)\subseteq Z\setminus {\mathrm{dom}}(\beta)$, and $x\mathrel{\alpha} y$ if and only if $x\mathrel{\beta} y$ for any $x,y\in {\mathrm{dom}}(\alpha)$. We show that, under the canonical inclusion into the free left-handed SBIA over a finite set $Y$ where $X\subseteq Y$, the rule for decomposing of an atom over $X$ into an orthogonal join of atoms over $Y$ is governed by the  containment order on the corresponding partitions.

The structure of the paper is as follows. In Section \ref{s:prelim} we collect the background material on skew Boolean intersection algebras and explain that results for general SBIAs can be easily derived from  respective result  for the left-handed case which is the most important and to which the considerations in Sections \ref{s:normal_forms} and \ref{s:free}  are restricted. In Section~\ref{s:normal_forms} we give a construction of elements $e(X,\alpha, A)$, determined by pointed partitions $(X,\alpha, A)$, the main construction of the paper. These elements can be looked at as subtle generalizations of elementary conjunctions in a Boolean algebra. We call them {\em elementary elements}. We then study their properties and in Theorem~\ref{th:branching} we prove the Decomposition Rule, our crucial result, which was outlined in the previous paragraph. We do not require that the algebra is free to prove this result, just in a non-free algebra $(X,\alpha, A)$ can not be always reconstructed from  $e(X,\alpha, A)$ (in fact, this happens precisely if $e(X,\alpha, A)=0$). This leads to the theory of normal forms, which is summarized in Theorem \ref{th:normal_forms}. Section \ref{s:normal_forms} is concluded by pointing out that in the commutative case, that is, the case where $\sqcap=\wedge$, our theory reduces to the usual theory of disjunctive normal forms in generalized Boolean algebras. In Section~\ref{s:free} we turn to free left-handed SBIAs. In Proposition \ref{prop:free} we observe that, for $X$ finite, $S=\langle X\rangle$ is free over $X$ if and only if all elementary elements over $X$ are non-zero and pairwise distinct.  This leads to Theorem~\ref{th:combinatorial} where we describe the structure of finite free left-handed SBIAs and calculate several their combinatorial characteristics, such as the number of atomic ${\mathcal D}$-classes, the number of atoms and  cardinalities of  all the ${\mathcal D}$-classes. As soon as set partitions are involved, the characteristics are expressed in terms of Bell numbers and Stirling numbers of the second kind. We also show that the center of the free $n$-generated left-handed SBIA (and thus also its maximal commutative quotient) is isomorphic to the free $n$-generated GBA. Theorem~\ref{th:5.8} provides an explicit construction of infinite free algebras. It is followed by some of their properties. In the case, where the generating set $X$ is countable, the GBA underlying the SBA reduct of the free left-handed SBIA over $X$, is the dual GBA of the boundary (with one point removed) of the infinite partition tree, the `partition analogue' of the Cantor tree. Since the partition tree is Cantorian, its boundary is homeomorphic to the Cantor set. Thus the partition analogue of the free GBA over $X$ is isomorphic to the free GBA on $X$ itself. This contrasts the fact (see Corollary \ref{cor:free11}) that, for a finite generating set, the partition analogue of the free GBA is not free.

\section{Preliminaries}\label{s:prelim}

In this section we provide the  background material on skew Boolean intersection algebras and set up some notation. For a more extensive exposition of the background material and more results on skew lattices we refer the reader to \cite{BL, L2,L6,L3}.

\subsection{Skew Boolean algebras}\label{sub:skew}

A {\em  skew Boolean algebra} (or an {\em SBA}) is an algebra $(S;\wedge,\vee,\setminus, 0)$ of type $(2,2,2,0)$ such that the following identities hold:
\begin{enumerate}
\item \label{axs:associat} (associativity) $x\vee (y\vee z) = (x\vee y)\vee z$, $x\wedge (y\wedge z) = (x\wedge y)\wedge z$;
\item \label{axs:abs} (absorption) $x\vee (x\wedge y)=x=(y\wedge x)\vee x$ and $x\wedge (x\vee y)= x = (y\vee x)\wedge x$;
\item \label{axs:dist} (distributivity) $x\wedge (y\vee z)= (x\wedge y)\vee (x\wedge z)$, $(y\vee z)\wedge x=(y\wedge x)\vee (z\wedge x)$;
\item \label{axs:zero} (properties of $0$) $x\vee 0=0\vee x = x$, $x\wedge 0 = 0\wedge x =0$;
\item \label{axs:comp} (properties of relative complement) $(x\setminus y)\wedge y=y\wedge (x\setminus y)=0$, \\ $(x\setminus y)\vee (x\wedge y\wedge x)=x=(x\wedge y\wedge x)\vee (x\setminus y)$;
\item \label{axs:normality} (normality) $x\wedge y\wedge z\wedge t = x\wedge z\wedge y\wedge t$.
\end{enumerate}

Associativity and absorption tell us that $(S;\wedge,\vee)$ is a {\em skew lattice}, a non-commutative variant of a lattice.
Normality axiom is a weakened form of commutativiity. The absorption axiom, just as for lattices, implies:

\begin{enumerate}
\item[(7)] (idempotency) $x\vee x=x=x\wedge x$.
\end{enumerate}

Thus an SBA is a relatively complemented normal and distributive skew lattice with a zero element. GBAs may be characterised precisely as commutative SBAs.
Since SBAs are defined by identities, they form a variety of algebras.

To simplify notation, we take a convention to refer to a skew lattice $(S;\wedge,\vee)$ or a skew Boolean algebra $(S;\wedge,\vee,\setminus, 0)$ etc. just by $S$, thus making no notational difference between an algebra and its underlying set.

 The {\em underlying partial order} $\leq$ on a skew lattice $S$ is defined by $x\leq y$ if and only if $x\wedge y=y\wedge x=x$ or, equivalently, $x\vee y=y\vee x=y$. It plays a major role in this paper, as the intersection operation $\sqcap$, introduced later, coincides with the meet operation with respect to this order.

Normality implies that  the {\em principal subalgebra} $a^{\downarrow}=\{b\in S\colon b\leq a\}$
is a lattice for any element $a$ of an SBA $S$. Due to the presence of the zero element,  relative complementation and distributivity, each  $a^{\downarrow}$ is a GBA with the top element $a$.

A skew lattice $S$ is {\em rectangular} if it satisfies the identity $x\wedge y\wedge x=x$ or, equivalently, its dual
 $x\vee y\vee x=x$. 
 Given nonempty sets  $L$ and $R$, a rectangular skew lattice is defined on  $L\times R$  by
$$(a,b)\wedge (c,d)=(a,d)=(c,d)\vee (a,b).$$

To within isomorphism, every rectangular skew lattice is a copy of some such algebra.

 Let ${\mathcal{D}}$ be the equivalence on $S$ given by $x\mathrel{\mathcal{D}} y$ if and only if $x\wedge y\wedge x=x$ and $y\wedge x\wedge y=y$. The Clifford-McLean theorem for bands \cite{H} extends to skew lattices \cite[1.4]{L3} in that  ${\mathcal{D}}$ is a congruence on $S$, the ${\mathcal{D}}$-classes are maximal rectangular subalgebras of $S$ and the quotient $S/{\mathcal{D}}$ is the maximal lattice quotient of $S$. Note that $\{0\}$ always forms a separate ${\mathcal D}$-class. If $S$ is an SBA then $S/{\mathcal{D}}$ is the maximal GBA quotient of $S$. 
 
 Throughout the paper, for $a\in S$ by $[a]$ we  denote the
 ${\mathcal{D}}$-class of $a$ and by $\pi\colon S\to S/{\mathcal D}$  the {\em canonical projection} map that takes $a$ to $[a]$. The assignment $S\mapsto S/{\mathcal D}$ may be extended to a functor from the category of SBAs to the category of GBAs which is a left adjoint to the inclusion functor in the reverse direction. Thus $S/{\mathcal D}$ is sometimes called the {\em commutative reflection} of $S$.

\subsection{Left-handed skew Boolean algebras}\label{sub:left-handed}

A skew lattice is called {\em left-handed} (resp. {\em right-handed}) if it satisfies the identities
$$x\wedge y\wedge x=x\wedge y \text{ and } x\vee y\vee x=y\vee x$$
$$(\text{resp. } x\wedge y\wedge x=y\wedge x \text{ and } x\vee y\vee x=x\vee y).$$

In a left-handed skew lattice the rectangular subalgebras are {\em left flat} meaning that $x\mathrel{\mathcal{D}} y$ if and only if $x\wedge y=x$ and $y\wedge x=y$.
A left-handed SBA $S$ can be characterised as an SBA where the normality axiom is replaced by the following stronger axiom

\begin{enumerate}
\item[(6$'$)] (left normality) $x\wedge y\wedge z=x\wedge z\wedge y$,
\end{enumerate}
and a dual axiom holds for right-handed SBAs. It follows that left-handed (as well as right-handed) SBAs form a variety of algebras. 

Let $S$ be a left-handed SBA, $a\in S$ and $\beta\in S/{\mathcal D}$ where $\beta\leq [a]$. There is a unique $b\in S$ such that $a\geq b$ and $[b] = \beta$.  Indeed, for all $c\in \beta$, left normality implies $a\geq a\wedge c$ where $a\wedge c\in\beta$. But given $c,d\in \beta$ we get $a\wedge c=a\wedge c\wedge d=a\wedge d\wedge c=a\wedge d$. This element $b$ is called the {\em restriction} of $a$ to $\beta$ and is denoted by $a|_{\beta}$. Note that $a\wedge b = a|_{[a] \wedge [b]}$ and $a\setminus b = a|_{[a]\setminus [b]}$ for any $a,b\in S$.

An SBA $S$ is called {\em primitive} if $S/{\mathcal D}={\bf 2}$ where ${\bf 2}=\{0,1\}$ is a two-element Boolean algebra. Equivalently, $S$ is primitive if and only if it has a unique non-zero ${\mathcal D}$-class. That is, it is just a rectangular SBA with a zero adjoined. Let $X$ be a set and $0\not\in X$. Putting $a\wedge b=a$, $a\vee b=b$ and $a\setminus b=0$ for any $a,b\in X$ defines on $X\cup \{0\}$ the (unique possible) structure of a primitive left-handed SBA with zero $0$ and non-zero ${\mathcal D}$-class $X$.    If $X=\{1,2,\dots, n\}$, where $n\geq 1$, the primitive left-handed SBA $X\cup \{0\}$ is denoted by ${\bf{(n+1)}}_L$. Just as any GBA can be embedded into a power of ${\bf 2}$, any left-handed SBA can be embedded into a power of ${\bf 3}_L$  \cite[Corollary 1.14]{L2}. See and   also \cite{Kud1} for a construction of a canonical such embedding. Moreover, just as any finite GBA is isomorphic to a finite power of ${\bf 2}$, any  left-handed SBA $S$ with $S/{\mathcal D}$ finite is isomorphic to a finite product of primitive left-handed SBAs \cite[Theorem 1.16]{L2}.

Let $(S;\wedge,\vee,\setminus,0)$ be a left-handed SBA. We define new operations $\wedge'$ and $\vee'$ on $S$ by
$a\wedge' b = b\wedge a$ and $a\vee' b= b\vee a$. Then $(S;\wedge',\vee',\setminus, 0)$ is right-handed. It is the {\em right-handed dual} of $S$.  In what follows, when discussing one-sided SBAs, we consider left-handed algebras, but upon dualization, similar definitions, results, etc. also hold for right-handed algebras.

The relations ${\mathcal{L}}$ and ${\mathcal{R}}$ on a skew lattice $S$ are given by
$$a\mathrel{\mathcal{L}} b \Leftrightarrow a \wedge b = a \text{ and } b \wedge a = b,$$
$$a\mathrel{\mathcal{R}}b \Leftrightarrow  a \wedge b = b  \text{ and } b \wedge a = a.$$
 The following extends Kimura's respective result for regular bands \cite{Kim}. 
 
\begin{theorem}[{Leech \cite[1.6]{L3}}] \label{th:decomp2} The relations ${\mathcal{L}}$ and ${\mathcal{R}}$ are congruences for any skew lattice $S$. Moreover, $S/{\mathcal{L}}$ is the maximal right-handed image of $S$, $S/{\mathcal{R}}$ is the maximal left-handed image of $S$, and the following diagram is a pullback:
\begin{center}
\begin{tikzpicture}[scale=0.9]
\matrix (m) [matrix of math nodes, row sep=3.3em, column sep=1.8em, text height=1.2ex, text depth=0.25ex]
{  S & & S/{\mathcal{R}} \\
      S/{\mathcal{L}} & & S/{\mathcal{D}} \\};
\path[->>] (m-1-1) edge (m-2-1);
\path[->>] (m-1-1) edge (m-1-3);
\path[->>] (m-2-1) edge  (m-2-3);
\path[->>] (m-1-3) edge  (m-2-3);
\end{tikzpicture}
\end{center}
\end{theorem}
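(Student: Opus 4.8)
The plan is to derive everything from the Clifford--McLean decomposition already recorded above: $\mathcal{D}$ is a congruence, its classes are the maximal rectangular subalgebras, and $S/\mathcal{D}$ is the maximal lattice quotient. First I would observe that $\mathcal{L},\mathcal{R}\subseteq\mathcal{D}$: if $a\mathcal{L}b$ then $a\wedge b\wedge a=a\wedge a=a$ and $b\wedge a\wedge b=b$, so $a\mathcal{D}b$, and similarly for $\mathcal{R}$. Thus both relations refine the rectangular decomposition, and inside a single class $D\cong L\times R$ (with $(a,b)\wedge(c,d)=(a,d)$) a direct computation shows that $a\mathcal{L}b$ holds iff $a,b$ share the same $R$-coordinate and $a\mathcal{R}b$ iff they share the same $L$-coordinate. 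This makes $\mathcal{L}$ and $\mathcal{R}$ equivalence relations at once, and it yields two facts I will reuse: $\mathcal{L}\cap\mathcal{R}=\Delta$ (in fact immediate, since $a\wedge b=a=b\wedge a$ forces $a=b$), and $\mathcal{L}\circ\mathcal{R}=\mathcal{R}\circ\mathcal{L}=\mathcal{D}$, with explicit witnesses: for $a\mathcal{D}b$ one checks straight from the axioms that $b\wedge a\mathrel{\mathcal{L}}a$ and $b\wedge a\mathrel{\mathcal{R}}b$, while $a\wedge b$ serves the opposite composite.

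The heart of the argument, and the step extending Kimura's theorem, is that $\mathcal{L}$ and $\mathcal{R}$ are congruences of the whole skew lattice, i.e. compatible with both $\wedge$ and $\vee$. Here I would exploit a meet--join duality: using the absorption laws one verifies that $a\wedge b=a,\ b\wedge a=b$ is equivalent to $a\vee b=b,\ b\vee a=a$, so that $\mathcal{L}$ is simultaneously Green's $\mathcal{L}$-relation of the $\wedge$-band and Green's $\mathcal{R}$-relation of the $\vee$-band (and dually for $\mathcal{R}$). Since the two band reducts of a skew lattice are regular bands, Kimura's theorem \cite{Kim} applies to each: as the $\wedge$-band's $\mathcal{L}$-relation it is compatible with $\wedge$, and as the $\vee$-band's $\mathcal{R}$-relation it is compatible with $\vee$. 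Hence $\mathcal{L}$ is a skew-lattice congruence, and $\mathcal{R}$ is handled dually. I expect this to be the main obstacle: it is exactly the point where regularity of the reducts is needed, and the cleanest route is to isolate the absorption equivalence above rather than to verify the four compatibility conditions by hand.

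With the congruences in place, the handedness and maximality claims are quick. Because $\mathcal{L}$ collapses the $L$-coordinate inside every rectangular class, the $\mathcal{D}$-classes of $S/\mathcal{L}$ are right-zero bands (for $\overline{x},\overline{y}$ in a common class, $\overline{x}\wedge\overline{y}=\overline{x\wedge y}=\overline{y}$), whence $S/\mathcal{L}$ is right-handed; dually $S/\mathcal{R}$ is left-handed. For maximality, note that in any right-handed skew lattice $\mathcal{L}$ is trivial, since there $x\wedge y=y$ whenever $x\mathcal{D}y$. So if $\theta$ is any congruence with $S/\theta$ right-handed and $a\mathcal{L}b$, then the images satisfy $\overline{a}\mathcal{L}\overline{b}$ in $S/\theta$, forcing $\overline{a}=\overline{b}$, i.e. $\mathcal{L}\subseteq\theta$. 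Thus every right-handed image of $S$ factors through $S/\mathcal{L}$, making it the maximal one; dually for $S/\mathcal{R}$.

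Finally, for the pullback I would consider the map $\varphi\colon S\to (S/\mathcal{L})\times_{S/\mathcal{D}}(S/\mathcal{R})$ into the fibre product of the two canonical projections, given by $\varphi(s)=([s]_{\mathcal{L}},[s]_{\mathcal{R}})$; it is a well-defined homomorphism because each coordinate is one, and it lands in the fibre product since $[s]_{\mathcal{L}}$ and $[s]_{\mathcal{R}}$ have the common image $[s]_{\mathcal{D}}$. Injectivity of $\varphi$ is precisely $\mathcal{L}\cap\mathcal{R}=\Delta$, and surjectivity is precisely $\mathcal{L}\circ\mathcal{R}=\mathcal{D}$: a typical element of the fibre product is a pair $([a]_{\mathcal{L}},[b]_{\mathcal{R}})$ with $a\mathcal{D}b$, and the witness $c=b\wedge a$ from the first paragraph satisfies $c\mathcal{L}a$ and $c\mathcal{R}b$, so $\varphi(c)=([a]_{\mathcal{L}},[b]_{\mathcal{R}})$. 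Hence $\varphi$ is an isomorphism, which is exactly the assertion that the displayed square is a pullback.
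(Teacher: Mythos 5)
The paper offers no proof of this statement to compare against: it is quoted as background directly from Leech \cite[1.6]{L3}, so your reconstruction stands on its own. On its merits it is essentially correct and follows the classical route. The inclusion $\mathcal{L},\mathcal{R}\subseteq\mathcal{D}$, the coordinate description of $\mathcal{L}$ and $\mathcal{R}$ inside a rectangular class, the witnesses $b\wedge a$ and $a\wedge b$ establishing $\mathcal{L}\circ\mathcal{R}=\mathcal{R}\circ\mathcal{L}=\mathcal{D}$ (e.g.\ $c=b\wedge a$ gives $c\wedge a=c$, $a\wedge c=a\wedge b\wedge a=a$, $c\wedge b=b\wedge a\wedge b=b$, $b\wedge c=c$), the triviality of $\mathcal{L}$ in any right-handed skew lattice, and the reduction of the pullback claim to injectivity ($\mathcal{L}\cap\mathcal{R}=\Delta$) plus surjectivity ($\mathcal{D}\subseteq\mathcal{L}\circ\mathcal{R}$) of $\varphi(s)=([s]_{\mathcal{L}},[s]_{\mathcal{R}})$ all check out against the axioms and the Clifford--McLean facts the paper records.

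One dependency deserves flagging. Your congruence step --- the genuinely hard part, as you correctly identify --- rests on the assertion that \emph{both} band reducts $(S,\wedge)$ and $(S,\vee)$ of an arbitrary skew lattice are regular, so that Kimura's theorem applies to each. That assertion is true, but it is itself a nontrivial theorem of Leech (from his earlier work on skew lattices in rings), is not among the facts recorded in this paper's preliminaries, and is precisely the technical content that makes the extension of Kimura's result go through; as written you have in effect replaced one citation by another. (The normality axiom is only available here for SBAs, not for general skew lattices; even for SBAs it yields regularity of the $\wedge$-reduct in one line, $x\wedge y\wedge x\wedge z\wedge x = x\wedge x\wedge y\wedge z\wedge x = x\wedge y\wedge z\wedge x$, but the $\vee$-reduct still needs the general fact.) To make the proof self-contained you should either derive the regularity identities from associativity and absorption, or verify the four compatibility conditions for $\mathcal{L}$ and $\mathcal{R}$ directly. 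A smaller point: in the maximality paragraph, your computation $\bar x\wedge\bar y=\bar y$ inside a $\mathcal{D}$-class of $S/\mathcal{L}$ tacitly uses that $\bar x\mathrel{\mathcal{D}}\bar y$ in $S/\mathcal{L}$ forces $x\mathrel{\mathcal{D}}y$ in $S$; this is easy (pass to $S/\mathcal{D}$, where $[x]\wedge[y]=[x]$ and $[y]\wedge[x]=[y]$ give $[x]=[y]$) but should be said.
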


Theorem \ref{th:decomp2} provides an effective and direct tool to extend results obtained for left-handed SBAs to general SBAs.
Thus all results obtained in Sections \ref{s:normal_forms} and \ref{s:free} of this paper for left-handed algebras, admit  extensions to general algebras. These extensions can be obtained similarly as in \cite{KL} where results on free SBAs are discussed and explicitly formulated also for right-handed and general algebras. We leave the details to an interested reader.

\subsection{Skew Boolean intersection algebras} A skew Boolean algebra has {\em (finite) intersections} if any finite set of its elements has the greatest lower bound with respect to the underlying partial order, called the {\em intersection} and denoted $\sqcap$.\footnote{This differs from the standard notation, but we need to reserve the symbols $\cap$ and $\cup$ to denote set intersection and union.} It is of course enough to require only that binary intersections exist.

If an SBA $(S; \wedge,\vee,\setminus, 0)$ has intersections, upon adding the intersection operation $\sqcap$ to the signature of the algebra, we get the algebra $(S;\wedge,\vee,\setminus, \sqcap, 0)$ called a {\em skew Boolean intersection algebra} or, briefly, an SBIA.  If $(S;\wedge,\vee,\setminus, \sqcap, 0)$ is an SBIA, then $(S; \wedge,\vee,\setminus, 0)$ is its {\em SBA-reduct}. According to our earlier convention, when this does not cause an ambiguity, we abbreviate an SBIA or a left-handed SBIA  $(S;\wedge,\vee,\setminus, \sqcap, 0)$ simply by $S$. By \cite[Proposition 2.6]{BL} we have:

\begin{proposition}
Let $(S; \wedge,\vee,\setminus, \sqcap, 0)$ be an algebra of type $(2,2,2,2,0)$. Then it is an SBIA if and only if $(S;\wedge,\vee,\setminus, 0)$ is an SBA, $(S,\sqcap)$ is a semilattice (meaning that $\sqcap$ is idempotent and commutative) and the following identities hold:
$$
x\sqcap (x\wedge y\wedge x)=x\wedge y\wedge x; \quad x\wedge (x\sqcap y) = x\sqcap y = (x\sqcap y)\wedge x.
$$
 \end{proposition}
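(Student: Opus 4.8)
The plan is to prove both implications by unravelling what it means for $\sqcap$ to be the intersection, i.e.\ the greatest lower bound with respect to the underlying partial order $\leq$, where $a\leq b$ abbreviates $a\wedge b=b\wedge a=a$. Throughout I would freely use associativity and idempotency in the SBA reduct, together with the elementary fact that $x\wedge y\wedge x\leq x$, which follows since $(x\wedge y\wedge x)\wedge x=x\wedge y\wedge x=x\wedge (x\wedge y\wedge x)$.

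For the forward direction, assume $(S;\wedge,\vee,\setminus,\sqcap,0)$ is an SBIA, so that $(S;\wedge,\vee,\setminus,0)$ is an SBA and $\sqcap$ returns greatest lower bounds. The conditions on the SBA reduct and the semilattice axioms for $\sqcap$ are immediate, since any greatest-lower-bound operation is idempotent, commutative and associative. The two displayed identities then fall out of the order-theoretic meaning of $\sqcap$: because $x\wedge y\wedge x\leq x$, the meet of $x$ and $x\wedge y\wedge x$ is just $x\wedge y\wedge x$, giving the first identity; and because $x\sqcap y$ is a lower bound of $x$, we have $x\sqcap y\leq x$, which is precisely the content of the second identity.

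The substance is the converse. Assuming the SBA reduct, the semilattice axioms and the two identities, I would show that $x\sqcap y$ is the greatest lower bound of $x$ and $y$. The first key step is the auxiliary fact that $w\leq x$ implies $x\sqcap w=w$: indeed $w\leq x$ gives $w=x\wedge w\wedge x$, so the first identity (with $y$ replaced by $w$) yields $x\sqcap w=x\sqcap (x\wedge w\wedge x)=x\wedge w\wedge x=w$, and commutativity gives $w\sqcap x=w$ as well. Next, the second identity directly states $x\sqcap y\leq x$, while interchanging the roles of $x$ and $y$ and using commutativity of $\sqcap$ gives $x\sqcap y=y\sqcap x\leq y$; hence $x\sqcap y$ is a lower bound of both $x$ and $y$.

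It remains to show that $x\sqcap y$ is the greatest such bound, and this is the step I expect to be the main obstacle, since it is where associativity of $\sqcap$ and the second identity must be combined in the right way. Given any $z$ with $z\leq x$ and $z\leq y$, the auxiliary fact yields $z\sqcap x=z$ and $z\sqcap y=z$, whence associativity gives $z\sqcap (x\sqcap y)=(z\sqcap x)\sqcap y=z\sqcap y=z$. Finally, to convert this semilattice equation into the order relation $z\leq x\sqcap y$, I would invoke the second identity with $x\sqcap y$ in the role of the first variable and $z$ in the role of the second: since $(x\sqcap y)\sqcap z=z$, it reads $(x\sqcap y)\wedge z=z=z\wedge (x\sqcap y)$, which is exactly $z\leq x\sqcap y$. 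This closes the converse and completes the proof.
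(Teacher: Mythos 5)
Your proof is correct, but there is nothing in the paper to compare it against: the paper does not prove this proposition at all, quoting it directly from Bignall and Leech \cite[Proposition 2.6]{BL}. Your argument is the natural direct verification, and every step checks out: the forward direction is immediate from $\sqcap$ being a greatest-lower-bound operation together with $x\wedge y\wedge x\leq x$; in the converse, your auxiliary fact ($w\leq x$ implies $x\sqcap w=w$, via $w=x\wedge w\wedge x$ and the first identity), the lower-bound reading of the second identity, and the final conversion of $(x\sqcap y)\sqcap z=z$ back into $z\leq x\sqcap y$ via the second identity are all sound. One point deserves emphasis: you correctly identified the greatest-lower-bound step as the crux, and your use of associativity of $\sqcap$ there is not merely convenient but essential. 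The paper's parenthetical gloss (``meaning that $\sqcap$ is idempotent and commutative'') omits associativity, but the word \emph{semilattice} must be read in its standard sense (idempotent commutative \emph{semigroup}), as you implicitly do. Indeed, without associativity the proposition fails: on the GBA $\mathbf{2}^3$ with atoms $p,q,r$, define $\sqcap$ to agree with $\wedge$ except that $(p\vee q)\sqcap (q\vee r)=0$; then idempotency, commutativity and both displayed identities hold (the first only constrains comparable pairs, the second only forces $x\sqcap y$ to be \emph{some} lower bound), yet $x\sqcap y$ is not the greatest lower bound, and associativity is exactly what rules this out. So your proof, besides being complete, locates precisely where each hypothesis is used.
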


 Thus SBIAs and left-handed SBIAs form varieties of algebras.
 
 Let $(S;\wedge,\vee,\setminus, \sqcap, 0)$ be an SBIA. The congruence  ${\mathcal D}$ on its SBA reduct $(S;\wedge,\vee,\setminus, 0)$ is in general no longer a congruence on $(S;\wedge,\vee,\setminus, \sqcap, 0)$, as  ${\mathcal D}$  is is not in general respected by $\sqcap$. For example, in ${\bf 3}_L$ we have $1 \mathrel{{\mathcal D}} 1$ and $1 \mathrel{{\mathcal D}} 2$ but $1=1\sqcap 1$ is not  ${\mathcal{D}}$-related with  $0 = 1\sqcap 2$. More generally, if $a,b\in S$ and $a \mathrel{\mathcal D} b$ then $a \mathrel{\mathcal D} (a\sqcap b)$ holds if and only if $a=b$. It follows that ${\mathcal D}$  is respected by $\sqcap$ if and only if $S$ is commutative in which case $\sqcap=\wedge$.
By $S/{\mathcal D}$ we denote the quotient over ${\mathcal D}$ of the SBA reduct of $S$, bearing in mind that in general $[a\sqcap b]\not=[a]\wedge [b]$.

\subsection{Some properties of  left-handed SBIAs}

An SBIA is {\em primitive} if its SBA reduct is a primitive SBA (note that any primitive SBA has intersections). By \cite[Theorem 3.5]{BL}, every SBIA can be embedded (as an SBIA) into a product of primitive SBIAs.  

A ${\mathcal D}$-class $D$ of an SBA $S$ is {\em atomic}, if $D$ is an {\em atom} of $S/{\mathcal D}$, that is, $D\neq\{0\}$ and if $a\leq D$ holds in $S/{\mathcal D}$ then either $a=D$ or $a=0$. We recall the following known structure result which extends the classical fact that a finite Boolean algebra is isomorphic to the powerset Boolean algebra of the set of its atoms.

\begin{proposition}\label{prop:structure}
Let $S$ be a left-handed SBA which has a finite number of ${\mathcal D}$-classes and let $D_1,\dots, D_m$ be the list of all atomic ${\mathcal D}$-classes of $S$. Then $S$ is isomorphic, as an SBIA, to the product $D_1^0\times \dots \times D_m^0$ where, for each $i=1,\dots, m$, $D_i^0$ is the only primitive left-handed SBA on the set $D_i\cup \{0\}$. 
\end{proposition}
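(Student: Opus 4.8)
The plan is to produce one explicit map, built from the restriction operation, verify that it is an isomorphism of the \emph{SBA reducts} (which is essentially the finite structure theorem \cite[Theorem 1.16]{L2}), and then \emph{upgrade} it to an isomorphism of SBIAs by the simple observation that $\sqcap$ is order-theoretic and is therefore automatically preserved by any isomorphism of SBA reducts.

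First, I would record the setup. Since $S$ has only finitely many ${\mathcal D}$-classes, its commutative reflection $S/{\mathcal D}$ is a finite GBA, and by the remark recalled in the introduction (any SBA whose maximal commutative quotient is finite possesses intersections) the algebra $S$ indeed has intersections, so that speaking of $S$ as an SBIA is meaningful. By definition the atomic ${\mathcal D}$-classes $D_1,\dots,D_m$ are exactly the atoms of $S/{\mathcal D}$, whence $S/{\mathcal D}\cong \mathbf{2}^m$ and every $\beta\in S/{\mathcal D}$ is the orthogonal join $\beta=\bigvee_{i\,\colon\, D_i\leq\beta}D_i$ of the atoms below it. Each $D_i^0$ is a primitive left-handed SBA, hence automatically possesses intersections, with $x\sqcap y=x$ if $x=y$ and $x\sqcap y=0$ otherwise.

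Next, I would define $\phi\colon S\to D_1^0\times\cdots\times D_m^0$ by $\phi(a)=(a|_{[a]\wedge D_i})_{i=1}^m$. This is well defined because $[a]\wedge D_i\leq [a]$ and, $D_i$ being an atom, $[a]\wedge D_i\in\{0,D_i\}$, so the $i$-th coordinate is $a|_{D_i}\in D_i$ when $D_i\leq[a]$ and is $0$ otherwise. Using that every element is the orthogonal join of its restrictions to the atoms below it, $a=\bigvee_{i\,\colon\, D_i\leq[a]}a|_{D_i}$, one gets that $\phi$ is injective (each element is recovered from its coordinates) and surjective (assemble any tuple by an orthogonal join). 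That $\phi$ respects $\wedge,\vee,\setminus$ and $0$ would follow coordinatewise from the identities $a\wedge b=a|_{[a]\wedge[b]}$, $a\setminus b=a|_{[a]\setminus[b]}$ together with transitivity of restriction, $(a|_\beta)|_\gamma=a|_\gamma$ for $\gamma\leq\beta$; this is precisely \cite[Theorem 1.16]{L2} with the primitive factors identified as the $D_i^0$. Thus $\phi$ is an isomorphism of SBA reducts.

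The key step, and the only genuinely new content beyond the SBA case, would be to check that $\phi$ also respects $\sqcap$. Here the decisive point is that $\sqcap$ is not independent data: by definition $a\sqcap b$ is the greatest lower bound of $a$ and $b$ for the underlying order $\leq$, and $\leq$ is definable from $\wedge$ and $\vee$. Hence the SBA isomorphism $\phi$ is an isomorphism of the underlying ordered sets $(S,\leq)\to(D_1^0\times\cdots\times D_m^0,\leq)$, and any order isomorphism preserves all existing greatest lower bounds. Since $S$ has intersections and the product has intersections computed coordinatewise (each factor being a primitive SBIA), both $a\sqcap b$ and $\phi(a)\sqcap\phi(b)$ exist and must coincide, so $\phi$ is an isomorphism of SBIAs. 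The main obstacle I anticipate is not in this upgrade, which is almost free, but in the SBA-level claim of the previous paragraph; I would, however, lean on \cite[Theorem 1.16]{L2} for the homomorphism property and bijectivity. The two points I would be careful to pin down are that $S$ genuinely has intersections (this is where finiteness of $S/{\mathcal D}$ is used) and that the product's intersection is coordinatewise, so that the greatest-lower-bound argument applies to total, everywhere-defined operations.
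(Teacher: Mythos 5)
Your proposal is correct and takes essentially the same route as the paper: the paper's (outlined) proof uses exactly your map, assigning to $s\in S$ the tuple whose $k$-th coordinate is the unique atom of $D_k$ below $s$ (i.e.\ the restriction $s|_{D_k}$) when it exists and $0$ otherwise, and asserts that this extends to an SBIA isomorphism. The details you supply---that $S$ has intersections because $S/{\mathcal D}$ is finite, that bijectivity follows from recovering each element as the orthogonal join of its restrictions, and that $\sqcap$, being the meet for the underlying order, is automatically preserved by any isomorphism of SBA reducts---are precisely the details the paper declares ``readily recovered.''
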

\begin{proof}
We outline the proof, and the details can be readily recovered. To  $s\in S$ we assign $(a_1,\dots, a_m)\in D_1^0\times \dots \times D_m^0$ where, for each $k=1,\dots, m$, $a_k$ is either the atom in $D_k$ below $s$, if there is an atom in $D_k$ below $s$ (and then such an atom is unique), or $0$, otherwise. This assignment extends to an isomorphism between left-handed SBIAs. 
\end{proof}

Thus every finite left-handed SBIA is isomorphic to some ${\bf 2}^{m_2}\times {\bf 3}_L^{m_3}\times \dots \times {\bf (n+1)}_L^{m_{n+1}}$, where $m_i\geq 0$ for all $i$ under a convention that ${\bf n}_L^{0}\simeq \{0\}$.

The following results can be derived from \cite[Proposition 3.8]{BL} as well as from the duality theory of \cite{Kud}.

\begin{proposition} \label{prop:cong} Let $S$ be a finite SBIA and $\theta$ be a congruence on $S$.  Then the $\theta$-class $[0]_{\theta}$ of $0$ is either just $\{0\}$ or there are some atomic ${\mathcal D}$-classes, $D_1,\dots, D_k$, of $S$ such that $s\in [0]_{\theta}$ if and only if any atom below $s$ belongs to one of $D_1,\dots, D_k$. Furthermore, if $s\not\in [0]_{\theta}$ then $[s]_{\theta}=\{s\}$. 
\end{proposition}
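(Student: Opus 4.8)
The plan is to reduce to the structure theorem and then exploit the one feature separating an SBIA from its SBA reduct: the way $\sqcap$ interacts with the relation ${\mathcal D}$. Working in the left-handed case (the general case reducing to it via Theorem~\ref{th:decomp2}), Proposition~\ref{prop:structure} lets me identify $S$ with a finite product $D_1^0\times\cdots\times D_m^0$ of primitive SBIAs indexed by the atomic ${\mathcal D}$-classes of $S$. In this picture the atoms of $S$ are exactly the elements supported in a single coordinate, each $s\in S$ is the coordinatewise (hence well-defined) join of the atoms below it, and for $x,y$ in a fixed factor one has $x\wedge y=x$, $x\vee y=y$, and $x\sqcap y=x$ or $0$ according as $x=y$ or $x\ne y$.

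First I would pin down the kernel $K:=[0]_{\theta}$. Two observations do the work. If $s\in K$ and $a\le s$ is an atom, then $a=s\wedge a\mathrel{\theta}0\wedge a=0$, so $a\in K$; conversely if $s$ is a join of atoms all lying in $K$, then $s\mathrel{\theta}0$. Thus $K$ is exactly the set of $s$ all of whose atoms lie in $K$, and it remains to see that $K$ meets an atomic class in all or nothing. This is where $\sqcap$ enters: distinct atoms $a,b$ of a common factor $D_i$ satisfy $a\sqcap b=0$, so $a\mathrel{\theta}b$ would force $a=a\sqcap a\mathrel{\theta}a\sqcap b=0$; and if $a\mathrel{\theta}0$ then $b=b\vee 0\mathrel{\theta}b\vee a=a$ gives $b\mathrel{\theta}0$. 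Letting $C=\{i:D_i\subseteq K\}$, this yields the stated dichotomy: $K=\{0\}$ if $C=\emptyset$, and otherwise $K$ consists precisely of those $s$ whose atoms all belong to $\bigcup_{i\in C}D_i$.

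For the concluding clause I would establish the sharper fact that $\theta$ is determined by $C$, namely that $s\mathrel{\theta}t$ forces $s$ and $t$ to agree in every coordinate outside $C$. The engine is the identity $s=s\sqcap s\mathrel{\theta}s\sqcap t$, which gives $s\setminus(s\sqcap t)\mathrel{\theta}(s\sqcap t)\setminus(s\sqcap t)=0$, so $u:=s\setminus(s\sqcap t)\in K$. Reading $u$ off coordinatewise, $u_i=s_i$ whenever $s_i\ne t_i$ and $u_i=0$ otherwise; since $u\in K$, every coordinate $i$ with $s_i\ne t_i$ and $s_i\ne 0$ must lie in $C$, and the symmetric computation with $t\setminus(s\sqcap t)$ rules out $s_i\ne t_i$ with $t_i\ne 0$ outside $C$. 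Hence $s$ and $t$ agree off $C$. In particular, when $K=\{0\}$ (so $C=\emptyset$) the congruence is trivial and $[s]_{\theta}=\{s\}$ for every $s\ne 0$, which is the content of the final clause.

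I expect this last step to be the main obstacle. The description of the kernel is soft once the all-or-nothing collapse of atomic classes is isolated, but controlling the behaviour of $\theta$ away from $0$ needs the combined use of $\sqcap$ and $\setminus$ together with the kernel description already obtained. The only delicate bookkeeping is the coordinatewise evaluation of $s\setminus(s\sqcap t)$ in a product of primitive factors, which rests on the elementary fact that $(s\sqcap t)_i$ equals $s_i$ when $s_i=t_i$ and $0$ otherwise.
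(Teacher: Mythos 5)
The paper offers no proof of this proposition at all --- it is quoted as derivable from \cite[Proposition 3.8]{BL} and from the duality theory of \cite{Kud} --- so your self-contained argument via Proposition~\ref{prop:structure} is a genuinely different and more elementary route, and most of its content is correct. The identification of $K=[0]_{\theta}$ with the set of elements all of whose atoms lie in $K$, the all-or-nothing collapse of an atomic ${\mathcal D}$-class (via $a\sqcap b=0$ and $b\vee a=a$ for distinct atoms $a,b$ in a common class), and the coordinatewise evaluation of $s\setminus(s\sqcap t)$ showing that $s\mathrel{\theta}t$ forces $s$ and $t$ to agree in every coordinate outside $C$ are all sound. (Your appeal to Theorem~\ref{th:decomp2} for the two-sided case is left vague, but this matches the paper's own conventions in Section~\ref{s:prelim}; alternatively your computations run verbatim in a product of two-sided primitive factors, using $b\wedge a\wedge b=b$ in place of $b\vee a=a$.)

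The problem is the last step. You assert that the case $K=\{0\}$ ``is the content of the final clause,'' but the final clause quantifies over \emph{all} congruences: it claims $[s]_{\theta}=\{s\}$ for every $s\notin[0]_{\theta}$ even when $K\neq\{0\}$. Your agreement-off-$C$ result does not yield this when $C\neq\varnothing$ --- and it cannot, because the clause as literally stated is false. Take $S={\bf 3}_L\times{\bf 2}$ and let $\theta$ be the kernel of the projection onto ${\bf 2}$ (a congruence of SBIAs, since $\sqcap$ is computed coordinatewise in a product); then $[0]_{\theta}={\bf 3}_L\times\{0\}$ has exactly the stated form with $D_1$ the atomic class of ${\bf 3}_L\times\{0\}$, yet $s=(1,1)\notin[0]_{\theta}$ has $[s]_{\theta}=\{(0,1),(1,1),(2,1)\}$; already ${\bf 2}\times{\bf 2}$ with the analogous $\theta$ gives a commutative counterexample. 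What is true --- and what the paper's application in Proposition~\ref{prop:10}(1) actually needs --- is precisely your sharper fact, upgraded to an equality: $\theta$ is the factor congruence determined by $C$. You proved one inclusion; the converse is easy and worth adding: if $s$ and $t$ agree off $C$, write $s=u\vee v$ where $u$ agrees with $s$ off $C$ and is $0$ on $C$, while $v$ agrees with $s$ on $C$ and is $0$ off $C$; then $v\in K$, so $s\mathrel{\theta}u$, similarly $t\mathrel{\theta}u$, whence $s\mathrel{\theta}t$. So rather than claiming your $K=\{0\}$ conclusion matches the final clause, you should flag the clause as misstated: your argument establishes the correct statement (congruences are determined by their $0$-classes, and $[0]_{\theta}=\{0\}$ forces $\theta$ to be trivial), not the literal one.
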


\begin{proposition}\label{prop:10}
Let $S={\bf 2}^{m_2}\times {\bf 3}_L^{m_3}\times \dots \times {\bf (n+1)}_L^{m_{n+1}}$ and 
$T = {\bf 2}^{k_2}\times {\bf 3}_L^{k_3}\times \dots \times {\bf (n+1)}_L^{k_{n+1}}$, where 
$m_i, k_i\geq 0$ for each admissible $i$, be two finite left-handed SBIAs. Then:
\begin{enumerate}
\item $T$ is isomorphic to a quotient of $S$ (as an SBIA) if and only if $T$ is a direct factor of $S$, that is, $k_i\leq m_i$ for all $i$.
\item $T$ is isomorphic to the maximal commutative quotient of $S$ if and only if $m_2=k_2$ and $m_i=0$ for $i\geq 3$, that is, when $T\simeq {\bf 2}^{k_2}$.
\item The maximal commutative quotient of $S$ (as an SBIA) is isomorphic to the center of $S$. 
\end{enumerate}
\end{proposition}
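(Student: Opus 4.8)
The plan is to reduce everything to the primitive decomposition of Proposition~\ref{prop:structure}, which writes $S$ (up to isomorphism) as the product $\prod_i D_i^0$ of primitive left-handed SBIAs indexed by its atomic $\mathcal D$-classes. Since the atomic $\mathcal D$-classes are an intrinsic invariant of $S$ (they are the atoms of $S/{\mathcal D}$), the multiplicities $m_i$ (the number of factors isomorphic to ${\bf (i+1)}_L$, i.e.\ with $\mathcal D$-class of size $i$) are determined by $S$, and likewise $k_i$ by $T$. The whole proposition then becomes a statement about which sub-products arise as quotients, as commutative quotients, and as the center.

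For part~(1), the direction ``direct factor $\Rightarrow$ quotient'' is immediate: if $k_i\le m_i$ for all $i$ then $S\cong T\times T'$ for a complementary product $T'$, and the projection $S\tto T$ is a surjective SBIA-homomorphism. For the converse I would use Proposition~\ref{prop:cong}: a congruence $\theta$ on $S$ collapses into $[0]_\theta$ exactly the elements supported on some set $C$ of atomic $\mathcal D$-classes. The key computation is that this forces $\theta$ to be the kernel of the projection $\pi$ onto the factors outside $C$; indeed, writing an element as $s=s''\vee e$ where $e$ is supported on $C$ and $s''$ on its complement, the relations $e\mathrel\theta 0$ (valid since every atomic $\mathcal D$-class in $C$ lies in $[0]_\theta$) give $s\mathrel\theta s''$, so any two elements agreeing off $C$ are $\theta$-related. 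Hence $S/\theta\cong\prod_{i\notin C}D_i^0$, a sub-product, and by uniqueness of the decomposition this yields $k_i\le m_i$, i.e.\ $T$ is a direct factor.

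For part~(2) I would first record which primitive factors are commutative: ${\bf (i+1)}_L$ is commutative precisely when $i=1$, i.e.\ when it equals ${\bf 2}$, since two distinct elements $x\ne y$ of a $\mathcal D$-class satisfy $x\wedge y=x\ne y=y\wedge x$. A sub-product is therefore commutative iff every retained factor is a copy of ${\bf 2}$. The subtle point, and the one I expect to be the main obstacle, is that, as an SBIA, a non-commutative primitive factor admits no nontrivial commutative quotient and so must be collapsed \emph{entirely}: its only congruences are the identity and the full one, because any identification of two distinct $\mathcal D$-related elements $x,y$ forces $x=x\sqcap x\mathrel\theta x\sqcap y=0$ through the intersection operation (exactly the phenomenon exhibited for ${\bf 3}_L$ in Section~\ref{s:prelim}), so it has no quotient isomorphic to ${\bf 2}$. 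This is what makes the SBIA commutative reflection differ from the SBA reflection $S/{\mathcal D}$. Combining this with part~(1), the commutative quotients of $S$ are exactly the ${\bf 2}^r$ with $r\le m_2$, and the largest, the maximal commutative quotient, is ${\bf 2}^{m_2}$; thus $T$ is isomorphic to it iff $T\cong{\bf 2}^{m_2}$, that is $k_2=m_2$ and $k_i=0$ for $i\ge 3$.

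For part~(3) I would compute the center directly from the product decomposition. An element is central (commutes under $\wedge$ and $\vee$ with all of $S$, equivalently lies in a trivial $\mathcal D$-class) iff each of its components is central in the corresponding factor; in ${\bf 2}$ both elements are central, while in ${\bf (i+1)}_L$ with $i\ge 2$ only $0$ is. Hence the center is ${\bf 2}^{m_2}\times\{0\}^{\,\dots}$, which as a sub-SBIA is isomorphic to ${\bf 2}^{m_2}$. Comparing with part~(2) gives that the center is $\cong{\bf 2}^{m_2}\cong$ the maximal commutative quotient, as claimed. The only routine verifications left are that this set is closed under all five operations (clear, since each factor's central part is) and that in part~(1) the congruence $\theta$ identifies nothing beyond the fibres of $\pi$, so that $S/\theta\cong\prod_{i\notin C}D_i^0$ exactly.
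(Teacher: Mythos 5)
Your proposal is correct and takes essentially the same approach as the paper, whose own three-line proof derives part (1) from Propositions \ref{prop:cong} and \ref{prop:structure}, part (2) from the observation that the maximal commutative direct factor of $S$ is ${\bf 2}^{m_2}$, and part (3) from (2) together with the fact that the center of $S$ is isomorphic to ${\bf 2}^{m_2}$. The only divergences are cosmetic: you verify the center computation factorwise instead of citing \cite[Theorem 1.7]{L3}, your explicit simplicity argument for the primitive factors via $x\mathrel{\theta}y\Rightarrow x=x\sqcap x\mathrel{\theta}x\sqcap y=0$ makes precise what the paper leaves implicit in Proposition \ref{prop:cong} (including the deferred check that $\theta$ is exactly the kernel of the projection, which follows by intersecting with atoms), and you correctly read the statement's ``$m_i=0$ for $i\geq 3$'' in part (2) as a typo for ``$k_i=0$'', as the clause ``$T\simeq {\bf 2}^{k_2}$'' confirms.
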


\begin{proof} (1)  follows from Proposition \ref{prop:cong} and Proposition \ref{prop:structure} and its proof.
(2) is clear as the maximal commutative direct factor of $S$ is ${\bf 2}^{m_2}$.
(3) follows from (2) and the fact that the center of $S$ is isomorphic to ${\bf 2}^{m_2}$, see \cite[Theorem 1.7]{L3}.
\end{proof}

For future use, we need to record some identities holding in left-handed SBAs.

\begin{lemma}\label{lem:basic_sba} Let $S$ be a  left-handed SBA and  $x,y,z,t\in S$.
\begin{enumerate}
\begin{multicols}{2}
\item\label{sba1} $x\setminus (y\vee z)=x\setminus (z\vee y)$;
\item \label{sba2}  $(x\setminus y)\wedge z=(x\wedge z)\setminus y$;
\item \label{sba3} $x=(x\wedge y)\vee (x\setminus y)$;
\columnbreak
\item \label{sba4} $(x\setminus y)\setminus z=x\setminus (y\vee z)$;
\item \label{sba5} $(x\setminus y)\wedge (z\setminus t) = (x\wedge z)\setminus (y\vee t)$.
\end{multicols}
\end{enumerate}
\end{lemma}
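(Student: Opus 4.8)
The plan is to verify the five identities in Lemma~\ref{lem:basic_sba} by reducing everything to the primitive case and invoking the embedding theorem. Since any left-handed SBA embeds into a power of ${\bf 3}_L$ (by \cite[Corollary 1.14]{L2}), and each identity is a universally quantified equation, it suffices to check each one in ${\bf 3}_L$, where $\wedge$, $\vee$, $\setminus$ are given explicitly: $a\wedge b=a$, $a\vee b=b$, $a\setminus b=0$ for non-zero $a,b$ in the same ${\mathcal D}$-class, together with the obvious rules involving $0$. Concretely, I would note that in a left-handed SBA the value of any term is determined ${\mathcal D}$-class by ${\mathcal D}$-class, so each identity can be checked on the maximal GBA quotient $S/{\mathcal D}$ together with the left-normality identity (6$'$) that pins down the actual representative within a ${\mathcal D}$-class.

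For the individual steps, I would proceed as follows. For \eqref{sba1}, the point is that $\pi(y\vee z)=\pi(z\vee y)$ since $S/{\mathcal D}$ is a (commutative) GBA, and both sides are the restriction $x|_{[x]\setminus[y\vee z]}$, using the formula $a\setminus b=a|_{[a]\setminus[b]}$ recorded in the left-handed subsection; left-handedness guarantees the restriction depends only on the ${\mathcal D}$-class, not on the order of the join. For \eqref{sba2}, I would write both sides as restrictions of $x$: the left side is $(x|_{[x]\setminus[y]})|_{([x]\setminus[y])\wedge[z]}$ while the right side is $x|_{([x]\wedge[z])\setminus[y]}$, and these agree because the underlying GBA satisfies $([x]\setminus[y])\wedge[z]=([x]\wedge[z])\setminus[y]$ and restriction is transitive. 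Identity \eqref{sba3} is just the relative-complement axiom (5) rewritten: $x=(x\wedge y\wedge x)\vee(x\setminus y)$ together with $x\wedge y\wedge x=x\wedge y$ in the left-handed case, and an application of absorption/idempotency to match the stated form. For \eqref{sba4}, both sides equal $x|_{([x]\setminus[y])\setminus[z]}=x|_{[x]\setminus([y]\vee[z])}$, again reducing to the corresponding GBA identity $(u\setminus v)\setminus w=u\setminus(v\vee w)$ on ${\mathcal D}$-classes. Finally, \eqref{sba5} combines \eqref{sba2} and \eqref{sba4}: expand $(x\setminus y)\wedge(z\setminus t)$, push the meet inside using \eqref{sba2}, and collect the two difference terms using \eqref{sba4}.

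Alternatively, and perhaps more cleanly for a reader, I would present a single uniform argument: since every term built from $\wedge,\vee,\setminus$ evaluates, ${\mathcal D}$-class by ${\mathcal D}$-class, to a restriction of one of the variables, each identity reduces to checking (i) that the two sides lie over the same element of $S/{\mathcal D}$, which follows from the corresponding GBA identity, and (ii) that they select the same representative, which follows from left-normality. This reduces all five identities to routine Boolean-algebra computations in $S/{\mathcal D}$ plus the restriction formalism.

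I expect the main obstacle to be purely bookkeeping rather than conceptual: one must be careful that the restriction formula $a\setminus b=a|_{[a]\setminus[b]}$ and $a\wedge b=a|_{[a]\wedge[b]}$ interact correctly under iteration, and that transitivity of restriction ($(a|_\beta)|_\gamma=a|_\gamma$ when $\gamma\leq\beta$) is used in the right places; this transitivity is an immediate consequence of the uniqueness statement for restrictions established in the left-handed subsection. Once that machinery is in place, each of \eqref{sba1}--\eqref{sba5} collapses to a known identity in the generalized Boolean algebra $S/{\mathcal D}$, so no genuinely hard step remains.
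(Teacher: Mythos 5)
Your first route is exactly the paper's proof: since the five claims are identities and every left-handed SBA embeds into a power of ${\bf 3}_L$ \cite[Corollary 1.14]{L2}, it suffices to verify them in ${\bf 3}_L$ by a finite case check, which is all the paper says. Your alternative restriction-based argument is also sound for these particular identities, though be aware that the sweeping claim that every term built from $\wedge,\vee,\setminus$ evaluates to a restriction of one of its variables is false in general (e.g.\ $x\vee y$ in ${\bf 3}_L^2$ with $x=(1,0)$, $y=(0,1)$ is a restriction of neither); it works here only because $\vee$ appears either inside a subtrahend of $\setminus$, where only the ${\mathcal D}$-class $[y\vee z]$ matters, or at the top level in \eqref{sba3}, where the relative-complement axiom applies directly.
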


\begin{proof} It is enough to verify that the identities hold in ${\bf 3}_L$ and apply the fact that any left-handed SBA can be embedded into a power of ${\bf 3}_L$.
\end{proof}

\begin{lemma} \label{lem:aux1}Let $S$ be an SBA. For any $x,y,z,s,t\in S$: if $x\setminus y=x\setminus z$ then $(x\wedge s)\setminus (y\vee t)= (x\wedge s)\setminus (z\vee t)$ and $(s\wedge x)\setminus (y\vee t)= (s\wedge x)\setminus (z\vee t)$.
\end{lemma}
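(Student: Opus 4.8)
The plan is to reduce the statement to its SBA-reduct and then exploit the factorization of the difference operation through the relation $\mathcal{L}$ (or, equivalently, through restriction to $\mathcal{D}$-classes). The hypothesis $x\setminus y = x\setminus z$ is an equation in the SBA, so I would first observe that it suffices to prove each of the two desired identities separately, and that the second follows from the first upon passing to the right-handed dual (or by a symmetric argument), so I will concentrate on showing $(x\wedge s)\setminus(y\vee t) = (x\wedge s)\setminus(z\vee t)$.

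The key computational step is to use Lemma~\ref{lem:basic_sba} to rewrite both sides so that the hypothesis can be inserted. Using part~(\ref{sba4}), we have $(x\wedge s)\setminus(y\vee t) = ((x\wedge s)\setminus t)\setminus y$ and likewise $(x\wedge s)\setminus(z\vee t) = ((x\wedge s)\setminus t)\setminus z$ after reordering $y\vee t$ via part~(\ref{sba1}); so it is enough to show that the common element $w = (x\wedge s)\setminus t$ satisfies $w\setminus y = w\setminus z$. The hope is that $w$ is dominated by $x$ in a suitable sense, so that the hypothesis $x\setminus y = x\setminus z$ propagates down. Concretely, since $(x\wedge s)\setminus t = (x\setminus t)\wedge(s\setminus t)$ by part~(\ref{sba5}) lies below $x$ in the $\mathcal{L}$-structure, I expect $w\setminus y$ to factor as $w\wedge(x\setminus y)$, using identities of the form $(x\wedge s)\setminus y = x\wedge(s\setminus y)\wedge\cdots$ together with part~(\ref{sba2}) which commutes meets past differences.

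The main obstacle is handling the difference against the fixed left factor carefully: the operation $\setminus$ interacts with $\wedge$ only on one side in a clean way in a left-handed SBA, and the element $w$ has $x$ buried inside a meet $x\wedge s$ rather than sitting on the outside. The cleanest route I would pursue is to write, using parts~(\ref{sba2}) and~(\ref{sba5}), the chain
\begin{align*}
(x\wedge s)\setminus y &= (x\setminus y)\wedge s, \\
(x\wedge s)\setminus z &= (x\setminus z)\wedge s,
\end{align*}
so that the hypothesis $x\setminus y = x\setminus z$ immediately gives $(x\wedge s)\setminus y = (x\wedge s)\setminus z$; this is the special case $t=0$ and is the real heart of the matter. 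To get the general $t$, I would then replace $x$ throughout by $x\setminus t$, noting that $(x\setminus t)\setminus y = (x\setminus t)\setminus z$ still holds because differencing by $t$ on the left preserves the hypothesis — indeed $(x\setminus t)\setminus y = x\setminus(t\vee y) = x\setminus(y\vee t)$ and one applies the $t=0$ argument to the pair $y\vee t$, $z\vee t$ after checking $x\setminus(y\vee t)=x\setminus(z\vee t)$ reduces to the given equation via part~(\ref{sba4}).

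As a sanity check and fallback, since every left-handed SBA embeds into a power of ${\bf 3}_L$, the entire lemma can alternatively be verified by brute force in ${\bf 3}_L$, exactly as in the proof of Lemma~\ref{lem:basic_sba}; this guarantees the result is true and provides a safety net if the identity-chasing above encounters an unexpected snag with the order of operations. I would present the direct argument via parts~(\ref{sba2}), (\ref{sba4}) and~(\ref{sba5}) as the primary proof, since it is short and conceptually transparent, and relegate the embedding argument to a remark if needed.
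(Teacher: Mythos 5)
Your final argument is correct and essentially the paper's own proof: the paper likewise combines parts \eqref{sba2} and \eqref{sba4} of Lemma~\ref{lem:basic_sba}, rewriting $(x\wedge s)\setminus (y\vee t)=((x\setminus y)\wedge s)\setminus t$ so the hypothesis can be substituted, and it also records exactly your fallback, namely that these are quasi-identities and so may be verified in ${\bf 3}_L$. The only differences are cosmetic: you apply \eqref{sba4} before \eqref{sba2} (first deriving $x\setminus(y\vee t)=x\setminus(z\vee t)$, then meeting with $s$), you discard your speculative detour through $w=(x\wedge s)\setminus t$ in favour of this clean chain, and you treat the second equality by duality/symmetry where the paper simply says it ``is proved similarly.''
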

\begin{proof}
Applying parts \eqref{sba4} and \eqref{sba2}  of Lemma \ref{lem:basic_sba}, we obtain:
$$
(x\wedge s)\setminus (y\vee t) = ((x\wedge s)\setminus y)\setminus t = ((x\setminus y )\wedge s)\setminus t.
$$
But then also $(x\wedge s)\setminus (z\vee t)=((x\setminus z )\wedge s)\setminus t$, so that the first equality follows.
The second equality is proved similarly.

Alternatively, as these are quasi-identities, it is enough to prove them for ${\bf 3}_L$, as in the previous proof.
\end{proof}

The following simple observation will be frequently used.

\begin{lemma}\label{lem:useful} Let $S$ be a left-handed SBA and $a,b\in S$.
If $[a]\leq [b]$ then
$a\setminus b=0$.
\end{lemma}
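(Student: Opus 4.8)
The statement relates the order on the commutative reflection $S/\mathcal{D}$ to the difference operation on $S$ itself, so the natural strategy is to push the computation down to $S/\mathcal{D}$ via the canonical projection $\pi\colon S\to S/\mathcal{D}$ and exploit the fact that $S/\mathcal{D}$ is a GBA, where the corresponding implication is elementary. Recall from the preliminaries that $\mathcal{D}$ is a congruence on the SBA reduct of $S$, so $\pi$ is a homomorphism of SBAs; in particular it respects the operation $\setminus$.

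\textbf{Key steps.} First I would apply $\pi$ to the element $a\setminus b$. Since $\pi$ is a homomorphism and $\pi(x)=[x]$ for every $x$, this gives the identity
$$\pi(a\setminus b)=[a\setminus b]=[a]\setminus [b].$$
Second, I would use the hypothesis $[a]\leq [b]$ together with the fact that $S/\mathcal{D}$ is a generalized Boolean algebra: in any GBA the relative complement satisfies $u\setminus v=0$ whenever $u\leq v$ (this is just the GBA identity $u\setminus v=0$ for $u\leq v$, a standard Boolean fact applied in each principal ideal). Hence $[a]\setminus [b]=0$, and combining with the previous display yields $[a\setminus b]=0=[0]$, i.e. $a\setminus b\mathrel{\mathcal{D}}0$. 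Finally, I would invoke the observation recorded in the preliminaries that $\{0\}$ always forms a separate $\mathcal{D}$-class, so that $[x]=0$ forces $x=0$. Applying this to $x=a\setminus b$ gives $a\setminus b=0$, as required.

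\textbf{Main obstacle.} There is essentially no hard computation here; the only point that must be handled with care is the passage from $[a\setminus b]=0$ back to $a\setminus b=0$, which relies precisely on $\{0\}$ being its own $\mathcal{D}$-class rather than on any structural subtlety. An alternative, entirely routine, route would bypass $S/\mathcal{D}$ and instead embed $S$ into a power of $\mathbf{3}_L$ (as in the proof of Lemma~\ref{lem:basic_sba}) and verify the quasi-identity componentwise in $\mathbf{3}_L$; I expect the projection argument above to be cleaner and more transparent.
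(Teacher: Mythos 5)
Your proof is correct and takes essentially the same route as the paper: both pass to $S/\mathcal{D}$ via the congruence $\mathcal{D}$ to get $[a\setminus b]=[a]\setminus[b]=0$, and then conclude $a\setminus b=0$ from the fact that the zero $\mathcal{D}$-class contains only $0$ (the paper phrases this final step as $a\setminus b=a|_{[a]\setminus[b]}=a|_{[0]}=0$ using restrictions, which is the same observation in different notation).
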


\begin{proof} Since $[a\setminus b]=[a]\setminus [b]$ is the zero of $S/{\mathcal D}$ then $a\setminus b=a|_{[a]\setminus [b]}=a|_{[0]}=0$. 
\end{proof}

In the following lemma we collect some identities which hold in left-handed SBIAs.

\begin{lemma}\label{lem:basic_sbia} Let $S$ be a left-handed SBIA and $x,y,z\in S$.
\begin{enumerate}
\begin{multicols}{2}
\item \label{sbia3} $(x\wedge y)\sqcap (z\wedge y)=(x\sqcap z)\wedge y$;
\item \label{sbia4} $(x\sqcap y)\setminus (x\sqcap z)=(x\sqcap y)\setminus (x\sqcap y\sqcap z)$;
\columnbreak
\item \label{sbia5} $(x\vee y)\sqcap z =(x\sqcap z)\vee  (y\sqcap z)$.
\end{multicols}
\end{enumerate}
\end{lemma}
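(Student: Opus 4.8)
The plan is to prove each of the three identities in Lemma~\ref{lem:basic_sbia} by the same strategy already exploited in the excerpt: reduce everything to the primitive algebras. By \cite[Theorem 3.5]{BL} every SBIA embeds (as an SBIA) into a product of primitive SBIAs, and in the left-handed case the primitive algebras are precisely the ${\bf (n+1)}_L$. Since all three statements are identities in the SBIA signature, it suffices to verify them componentwise, hence in an arbitrary primitive left-handed SBIA $P=X\cup\{0\}$ with rectangular $\mathcal{D}$-class $X$. In $P$ the operations are completely explicit: for nonzero $a,b$ one has $a\wedge b=a$, $a\vee b=b$, $a\setminus b=0$, while $a\sqcap b=a$ if $a=b$ and $a\sqcap b=0$ if $a\ne b$; and every operation returns $0$ as soon as one argument is $0$. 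So each identity becomes a finite case check on whether the relevant generators are equal, unequal, or zero.

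First I would treat \eqref{sbia3}, $(x\wedge y)\sqcap(z\wedge y)=(x\sqcap z)\wedge y$. In $P$, if $y=0$ both sides are $0$; if $y\ne0$ then $x\wedge y$ equals $x$ when $x\ne0$ and $0$ otherwise, similarly for $z$, so the left side is nonzero exactly when $x=z\ne0$, matching the right side $(x\sqcap z)\wedge y$. Next \eqref{sbia5}, $(x\vee y)\sqcap z=(x\sqcap z)\vee(y\sqcap z)$: in $P$, $x\vee y$ picks out $y$ if $y\ne0$ and $x$ otherwise, and one checks both sides agree in each of the cases determined by which of $x,y,z$ are zero and which are mutually equal; the right-hand join collapses correctly because $\vee$ in a primitive algebra returns its rightmost nonzero argument. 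Finally \eqref{sbia4}, $(x\sqcap y)\setminus(x\sqcap z)=(x\sqcap y)\setminus(x\sqcap y\sqcap z)$: here $x\sqcap y$ is nonzero only when $x=y\ne0$, in which case $x\sqcap y=x$ and the two right-hand subtrahends $x\sqcap z$ and $x\sqcap y\sqcap z=x\sqcap z$ literally coincide, so both sides are equal and the identity is trivial on nonzero data, while if $x\sqcap y=0$ both sides are $0$.

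Alternatively, rather than expanding in $P$ by hand, one can give an equational derivation from the defining identities in the earlier proposition together with Lemma~\ref{lem:basic_sba}; for instance \eqref{sbia4} follows immediately from the identity $x\sqcap(x\wedge y\wedge x)=x\wedge y\wedge x$ and commutativity of $\sqcap$ by observing that $x\sqcap y\sqcap z$ and $x\sqcap z$ agree after absorbing into $x\sqcap y$, and \eqref{sbia3} and \eqref{sbia5} can be obtained by combining the mixed axioms $x\wedge(x\sqcap y)=x\sqcap y=(x\sqcap y)\wedge x$ with left normality and distributivity. The main obstacle, such as it is, lies in \eqref{sbia5}: the join does not in general commute with $\sqcap$, so the purely equational route requires care about left-handedness (so that $x\vee y\vee x=y\vee x$), whereas the embedding argument sidesteps this entirely. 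For that reason I would present the embedding-into-primitives verification as the primary proof, noting that it is a finite and routine case analysis, exactly as in the proofs of Lemma~\ref{lem:basic_sba} and Lemma~\ref{lem:aux1}.
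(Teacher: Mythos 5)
Your overall strategy is the same as the paper's: by \cite[Theorem 3.5]{BL} it suffices to verify the identities componentwise in primitive left-handed SBIAs, and your case analyses for \eqref{sbia3} and \eqref{sbia4} are essentially correct. But your description of the primitive algebra $P=X\cup\{0\}$ is internally inconsistent: it is false that ``every operation returns $0$ as soon as one argument is $0$'' --- one has $a\vee 0=0\vee a=a$ and $a\setminus 0=a$ in $P$ (only $\wedge$ and $\sqcap$ absorb zero). You even use the correct rule later (``$\vee$ returns its rightmost nonzero argument''), but the false rule matters, because it is precisely what makes your check of \eqref{sbia5} appear to go through.

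Decisively, the asserted case check for \eqref{sbia5} fails. Take $P={\bf 3}_L$ and $x=z=1$, $y=2$. Then $(x\vee y)\sqcap z=(1\vee 2)\sqcap 1=2\sqcap 1=0$, whereas $(x\sqcap z)\vee(y\sqcap z)=1\vee 0=1$. (Under your incorrect zero rule the right-hand side would also evaluate to $0$, which is how the discrepancy escaped you.) So no case analysis can establish \eqref{sbia5} for arbitrary $x,y,z$: the identity is not valid even in ${\bf 3}_L$ --- a point that the paper's own one-line proof (which is the same reduction to primitives, with the cases left implicit) also glosses over. What is true, and what every application in the paper actually needs (the clauses of a normal form are pairwise orthogonal), is the quasi-identity: if $x\sim y$, i.e.\ $x\wedge y=y\wedge x$ (in particular if $x\wedge y=y\wedge x=0$), then $(x\vee y)\sqcap z=(x\sqcap z)\vee(y\sqcap z)$. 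This version does pass the componentwise check, since compatibility in a primitive algebra forces $x=y$ or one of $x,y$ to be $0$, and both sides agree in each such case; as quasi-identities are inherited by products and subalgebras, the embedding argument then applies verbatim. Your alternative ``equational'' sketch does not repair this: for \eqref{sbia4} it is too vague to count as a derivation, and for \eqref{sbia5} no derivation from the axioms could succeed in the stated generality, since the identity is false.
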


\begin{proof} It is enough to verify the identities for primitive left-handed SBIAs. This is reduced to consideration of several cases, depending on if each pair of given elements $x,y,z$ has the same evaluation or not. For example, in the case where $x,y,z$ have pairwise distinct evaluations, the intersection of each pair is $0$. Then, for the first equality, both the left-hand side and the right-hand side equal $0$, etc.
\end{proof}

 Elements $a$ and $b$ of a left-handed SBA $S$ are called {\em compatible}, denoted  $a\sim b$, if $a\wedge b=b\wedge a$. Clearly, if $a,b\leq c$ for some $c\in S$ then $a\sim b$.
\begin{lemma}\label{lem:compat}\mbox{}
Let $S$ be a left-handed SBIA and $a,b\in S$.
Then $a\sim b$ if and only if $a\wedge b=a\sqcap b$.
\end{lemma}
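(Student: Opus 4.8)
The plan is to exploit the asymmetry, special to the left-handed setting, between the two natural order relations $a\wedge b\leq a$ and $a\wedge b\leq b$. The first always holds, whereas the second holds precisely when $a\sim b$; once this is isolated, both implications of the lemma follow immediately from the fact that $\sqcap$ is, by definition, the greatest lower bound with respect to the underlying order $\leq$.

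First I would record two elementary computations in the SBA reduct. Using idempotency, associativity and the left-handed identity $x\wedge y\wedge x=x\wedge y$, one gets $(a\wedge b)\wedge a=a\wedge b\wedge a=a\wedge b$ and $a\wedge(a\wedge b)=a\wedge b$, so $a\wedge b\leq a$ unconditionally. For the other side, $(a\wedge b)\wedge b=a\wedge b$ is immediate, while $b\wedge(a\wedge b)=b\wedge a\wedge b=b\wedge a$ again by left-handedness. Hence $a\wedge b\leq b$ if and only if $b\wedge a=a\wedge b$, that is, if and only if $a\sim b$. This single equivalence is the crux of the argument.

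For the backward implication, suppose $a\wedge b=a\sqcap b$. Since $a\sqcap b$ is a lower bound of $b$, we have $a\wedge b\leq b$, and the equivalence just established yields $a\sim b$. For the forward implication, suppose $a\sim b$. Then $a\wedge b$ is a common lower bound of $a$ and $b$ by the two computations above. To see that it is the greatest, I would take any $c$ with $c\leq a$ and $c\leq b$ and use $a\wedge c=c=c\wedge b$, associativity, and left normality $x\wedge y\wedge z=x\wedge z\wedge y$ to check that $c\wedge(a\wedge b)=(c\wedge a)\wedge b=c\wedge b=c$ and $(a\wedge b)\wedge c=a\wedge c\wedge b=c\wedge b=c$, so $c\leq a\wedge b$. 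Thus $a\wedge b$ is the greatest lower bound of $a$ and $b$, which by definition of the intersection is $a\sqcap b$.

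The only real obstacle is recognizing that left-handedness is exactly what breaks the symmetry between $a$ and $b$: it guarantees $a\wedge b\leq a$ for free while forcing $a\wedge b\leq b$ to encode the compatibility condition, and it is the clean equivalence $a\wedge b\leq b\iff a\sim b$ that drives both directions. Beyond the defining property of $\sqcap$ as the meet for $\leq$, no appeal to the full SBIA axioms is needed.
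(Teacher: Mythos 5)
Your proof is correct, and it takes a more elementary route than the paper's. The paper works through the quotient $S/{\mathcal D}$ and the restriction machinery: it first derives $a\sqcap b\leq a\wedge b$ from $[a\sqcap b]\leq [a]\wedge[b]=[a\wedge b]$ (both sides being restrictions of $a$), which reduces the equality $a\wedge b=a\sqcap b$ to the single inequality $a\wedge b\leq a\sqcap b$; its converse direction then invokes uniqueness of restrictions, identifying $a\wedge b=b|_{[a]\wedge[b]}=b\wedge a$. You bypass $S/{\mathcal D}$ entirely: your pivot is the purely equational observation that $b\wedge(a\wedge b)=b\wedge a\wedge b=b\wedge a$ by left-handedness, so that $a\wedge b\leq b$ holds precisely when $a\sim b$ --- which is exactly the computation hidden inside the paper's restriction argument, carried out by hand. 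In the forward direction, instead of comparing $a\wedge b$ with $a\sqcap b$ via the reverse inequality, you verify directly that $a\wedge b$ is the greatest lower bound, checking $c\leq a\wedge b$ for an arbitrary common lower bound $c$ using left normality; your computations $c\wedge(a\wedge b)=(c\wedge a)\wedge b=c$ and $(a\wedge b)\wedge c=a\wedge c\wedge b=c$ are both correct. The paper's version is shorter given the restriction apparatus already set up in Section 2; yours is self-contained, makes the role of left-handedness transparent, isolates the reusable equivalence $a\wedge b\leq b$ if and only if $a\sim b$, and substantiates your closing remark that nothing beyond the band identities and the definition of $\sqcap$ as the meet for $\leq$ is needed --- a point the paper's phrasing leaves implicit.
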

\begin{proof} Note that $[a\sqcap b]\leq [a]\wedge [b]=[a\wedge b]$ so that $a\sqcap b\leq a\wedge b$, as both are restrictions of $a$. Thus $a\wedge b=a\sqcap b$ is equivalent to $a\wedge b\leq a\sqcap b$. Assume that $a\sim b$, that is, $a\wedge b=b\wedge a$. Then $a\wedge b\leq a,b$ so that $a\wedge b\leq a\sqcap b$.
Conversely, assume that $a\sqcap b=a\wedge b$. Then $a\wedge b\leq b$ so that $a\wedge b=b|_{[a]\wedge [b]}=b\wedge a$.
\end{proof}

\section{Elementary elements and normal forms}\label{s:normal_forms}

Throughout this section, $S$ is a fixed left-handed SBIA and $X$ is a fixed finite non-empty subset of~$S$.

\subsection{Set partitions}
Let $Y$ be a non-empty set. The set of all partitions of $Y$ into non-empty subsets is denoted by ${\mathcal P}(Y)$. If $\alpha\in {\mathcal P}(Y)$,  $Y$ is the {\em domain} of $\alpha$, denoted by ${\mathrm{dom}}(\alpha)$.
A partition  $\alpha\in {\mathcal P}(Y)$ can be looked at as an equivalence relation on $Y$, so that  $x\mathrel{\alpha} y$ means that $x$ and $y$ belong to the same block of $\alpha$.
If $\alpha\in {\mathcal P}(Y)$ has the blocks $A_1,\dots, A_k$, we write $\alpha=\{A_1,\dots, A_k\}$ and say that $k$ is the {\em rank} of $\alpha$, denoted ${\mathrm{rank}}(\alpha)$. If the set $Y$ is linearly ordered, we agree to order the blocks of $\alpha\in {\mathcal P}(Y)$ by their minimum elements and write the elements inside each block in the increasing order. Furthermore, we adopt  a standard convention to list elements of the blocks rather than the blocks themselves and separate elements by vertical lines. Thus, e.g., the partition $\{\{x_1\},\{x_2,x_3\},\{x_4,x_5\}\}$ of $\{x_1,x_2,x_3,x_4,x_5\}$ is denoted by
$x_1|x_2 x_3| x_4x_5$. We also write $A\in \alpha$ to indicate that $A$ is a block of $\alpha$.

Our focus in this paper will be partitions of subsets of a given set. If $\alpha$ is a partition of a subset of $X$, we sometimes call the pair $(X,\alpha)$ just a {\em partition}.  Let  $Z\subseteq Y$, $Z\neq\varnothing$, and  $\alpha, \beta$ be partitions of non-empty subsets of $Z$ and $Y$, respectively.  We say that $(Y,\beta)$ {\em contains} $(Z,\alpha)$ and write $(Z,\alpha)\preceq (Y,\beta)$ if $${\mathrm{dom}}(\alpha)\subseteq {\mathrm{dom}}(\beta)\subseteq {\mathrm{dom}}(\alpha) \cup (Y\setminus Z)$$ and
for any $x,y\in {\mathrm{dom}}(\alpha)$: $x\mathrel{\alpha} y$ if and only if $x\mathrel{\beta} y$.

\begin{example} {\em Let $Y=\{x_1,x_2,x_3,x_4\}$, $Z=\{x_1,x_2,x_3\}$, $\alpha=x_1|x_2$,  $\beta=x_1|x_2x_4$
$\gamma=x_1x_3|x_2x_4$.  Then $(Y,\beta)$ contains $(Z,\alpha)$, but $(Y,\gamma)$ does  not as ${\mathrm{dom}}(\gamma)\not\subseteq {\mathrm{dom}}(\alpha)\cup (Y\setminus Z)$.}
\end{example}

Let $Y,Z, \alpha,\beta$ be as above and let  $(Z,\alpha)\preceq (Y,\beta)$. Each block $A$ of $\alpha$ is contained in a (unique) block of $\beta$ called the block {\em induced} by $A$ and denoted by $A\!\uparrow_{\alpha}^{\beta}$. Conversely, for each block $B$ of $\beta$ its intersection with ${\mathrm{dom}}(\alpha)$ is a block of $\alpha$  called the {\em restriction} of $B$ and denoted by $B\!\downarrow^{\beta}_{\alpha}$.

We call a partition $\alpha$ of a non-empty subset of $X$ {\em pointed} if some block $A\in\alpha$ is marked. We denote such a pointed partition by $(X,\alpha,A)$.

\subsection{Elements determined by pointed partitions}

Let $A=\{a_1,\dots, a_n\}$ be a finite non-empty subset of $S$.

\begin{itemize}
\item By $\sqcap A$ we denote the element $a_1\sqcap a_2\sqcap\dots \sqcap a_n$. This  is well-defined since the operation $\sqcap$ is commutative.

\item For $a\in A$, by $a\wedge (\wedge A)$ we denote the element $a\wedge a_1\wedge \dots\wedge a_n$. This is well-defined since the operation $\wedge$ is left normal.

\item For $s\in S$, by $s\setminus (\vee A)$ we denote the element $s\setminus (a_1\vee \dots \vee a_n)$. This is well-defined by part \eqref{sba1} of Lemma \ref{lem:basic_sba}. We also put $s\setminus (\vee \varnothing) = s$.
\end{itemize}

We turn to one of the main constructions of the paper. Let  $(X,\alpha,A)$ be a pointed partition of a non-empty subset of $X$.  Suppose that $\alpha=\{A_1,\dots, A_k\}$ (thus $A=A_i$ for some $i$) and $Y={\mathrm{dom}}(\alpha)$. We define the element
\begin{equation}\label{eq:pq}
e(X,\alpha, A) =p\setminus (\vee Q), \, \text{ where }
\end{equation}
\begin{equation*}\label{eq:def_p}
p= (\sqcap A)\wedge (\wedge \{\sqcap A_i \colon 1\leq i\leq k\})=(\sqcap A)\wedge (\sqcap A_1)\wedge\dots \wedge (\sqcap A_k) \text{ and }
\end{equation*}
\begin{equation*}\label{eq:def_q}
Q=(X\setminus Y)\cup \{\sqcap (A_i \cup  A_j)\colon 1\leq i<j\leq k\}.
\end{equation*}

For the reason, which will become clear later, we  call the elements $e(X,\alpha,A)$ the {\em elementary elements} over $X$.  We call the partition $(X,\alpha)$ the {\em support} of $e(X,\alpha, A)$.

\begin{remark} \label{rem:1} {\em In the special case where ${\mathrm{rank}}(\alpha)=1$, $Q$ equals $X\setminus Y$. In particular, if ${\mathrm{rank}}(\alpha)=1$ and $Y=X$,  $Q=\varnothing$, so that $e(X,\alpha,A)=p\setminus (\vee \varnothing)=p$.} \end{remark}

\begin{example}
{\em Let $X=\{x_1,x_2,x_3,x_4,x_5\}$,  $\alpha=x_1x_3|x_4$, $\beta=x_2|x_3x_4|x_5$ and $\gamma=x_1x_2x_3x_4x_5$. Then:
$$\begin{array}{l}
e(X,\alpha, \{x_4\})=(x_4\wedge (x_1\sqcap x_3))\setminus (x_2\vee x_5\vee (x_1\sqcap x_3\sqcap x_4));\\
e(X,\alpha, \{x_1,x_3\})=((x_1\sqcap x_3) \wedge x_4)\setminus (x_2\vee x_5\vee (x_1\sqcap x_3\sqcap x_4));\\
e(X,\beta, \{x_3,x_4\})=((x_3\sqcap x_4)\wedge  x_2\wedge x_5)\setminus (x_1\vee (x_2 \sqcap x_3 \sqcap x_4)\vee  (x_2 \sqcap x_5)\vee (x_3\sqcap x_4 \sqcap x_5));\\
e(X,\gamma, \{x_1,x_2,x_3,x_4,x_5\})=x_1\sqcap x_2\sqcap x_3\sqcap x_4\sqcap x_5.
\end{array}
$$}
\end{example}

\begin{proposition}\label{prop:crucial} Let  $(X,\alpha,A)$ and $(X,\beta,B)$ be pointed partitions of non-empty subsets of $X$. Then
\begin{enumerate}
\item \label{i:1}
$$
e(X,\alpha,A)\wedge e(X,\beta,B)=\left\lbrace\begin{array}{ll}e(X,\alpha, A), & \text{if } \alpha=\beta;\\
0, & \text{otherwise.}\end{array}\right.
$$

\item \label{i:2}
$$
e(X,\alpha,A)\sqcap e(X,\beta,B)=\left\lbrace\begin{array}{ll}e(X,\alpha, A), & \text{if } \alpha=\beta \text{ and } A=B;\\
0, & \text{otherwise.}\end{array}\right.
$$
\item\label{i:3}
$$
e(X,\alpha,A)\setminus e(X,\beta,B)=\left\lbrace\begin{array}{ll}e(X,\alpha, A), & \text{if } \alpha\neq \beta;\\
0, & \text{otherwise.}\end{array}\right.
$$
\end{enumerate}
\end{proposition}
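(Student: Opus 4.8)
The plan is to reduce everything to primitive left-handed SBIAs, exactly as in the proofs of Lemma~\ref{lem:basic_sba} and Lemma~\ref{lem:basic_sbia}, since the identities in each case are equivalent to identities (or, after case analysis, to conjunctions of identities) that hold universally. By \cite[Theorem 3.5]{BL} every SBIA embeds into a product of primitive SBIAs, so it suffices to verify each of the three displayed statements when $S$ is primitive; in that setting every element of $X$ evaluates either to $0$ or to an element of the single non-zero ${\mathcal D}$-class $D$, and $\sqcap$ acts as equality-testing: $a\sqcap b$ equals the common value if $a,b$ take the same non-zero value and equals $0$ otherwise. The whole argument then becomes bookkeeping about which of the generators collapse together.

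First I would record what $e(X,\alpha,A)$ evaluates to in a primitive algebra. Fix a homomorphism $\varphi\colon S\to P$ into a primitive left-handed SBIA $P$ with non-zero ${\mathcal D}$-class $D$. Writing $Y={\mathrm{dom}}(\alpha)$, the element $p=(\sqcap A)\wedge(\sqcap A_1)\wedge\cdots\wedge(\sqcap A_k)$ is non-zero under $\varphi$ if and only if every generator in $Y$ maps into $D$ \emph{and} the generators in each block $A_i$ all receive the \emph{same} value in $D$; in that case $\varphi(p)$ equals $\varphi(a)$ for any $a\in A$, because in a left-handed primitive algebra $x\wedge y=x$ whenever both are in $D$ and $\sqcap A$ selects the common value of the pointed block $A$. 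Next, subtracting $\vee Q$ removes this value precisely when some element of $Q$ attains it: the part $X\setminus Y$ kills $\varphi(p)$ if any generator \emph{outside} $Y$ shares the value of $A$, and the terms $\sqcap(A_i\cup A_j)$ kill it if the values of two distinct blocks $A_i,A_j$ happen to coincide. Summarizing, $\varphi(e(X,\alpha,A))\neq 0$ if and only if $\varphi$ sends exactly the generators in $Y$ into $D$, does so in blocks matching $\alpha$ with \emph{distinct} values on distinct blocks, and then $\varphi(e(X,\alpha,A))$ equals the common value of the pointed block $A$; otherwise it is $0$. This description is the crux, and I would state it as an auxiliary claim before proving the three formulas.

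With this in hand the three computations follow by comparing, for an arbitrary $\varphi$, the value of the left-hand side with that of the right-hand side. For $\wedge$: if $\alpha=\beta$ then the ``non-zero patterns'' coincide and, when both are non-zero, $\varphi(e(X,\alpha,A))\wedge\varphi(e(X,\beta,B))$ equals the left factor by left-handedness ($x\wedge y=x$ on $D$), giving $e(X,\alpha,A)$; if $\alpha\neq\beta$ the two factors can never be simultaneously non-zero (their defining partition constraints on $\varphi$ are incompatible), so the product is $0$. For $\sqcap$: again both are non-zero only when $\alpha=\beta$, and then $\varphi(e(X,\alpha,A))=\varphi(a)$ for $a\in A$ while $\varphi(e(X,\beta,B))=\varphi(b)$ for $b\in B$; since $\sqcap$ tests equality of values and distinct blocks of the \emph{same} partition receive distinct non-zero values, these agree exactly when $A=B$, yielding $e(X,\alpha,A)$, and give $0$ otherwise. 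For $\setminus$: when $\alpha\neq\beta$ the subtrahend is never simultaneously non-zero with the minuend, so $x\setminus 0=x$ returns $e(X,\alpha,A)$ on the non-zero patterns and $0\setminus(\cdots)=0$ elsewhere; when $\alpha=\beta$ the two elements are non-zero on the same pattern and share a value in $D$, so $x\setminus y=0$ there (as $x\wedge y\neq 0$), giving $0$ throughout.

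The main obstacle is not any single computation but packaging the case analysis cleanly: the $\alpha\neq\beta$ case splits into subcases according to \emph{why} the patterns are incompatible (different domains, or the same domain with different block structures), and one must check in each that the two elementary elements cannot both be non-zero under any $\varphi$. I expect the cleanest route is to prove the auxiliary evaluation claim once, phrased as ``$\varphi(e(X,\alpha,A))\neq 0$ iff $\varphi$ realizes the pattern of $(X,\alpha)$ with pointed value $A$'', and then to observe that two distinct partitions $\alpha\neq\beta$ (as set partitions of subsets of $X$) can never be simultaneously realized, since realizing $\alpha$ forces the kernel of $\varphi$ restricted to the generators to be exactly $\alpha$ (with a prescribed domain), and likewise for $\beta$. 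Once that incompatibility is isolated, all three formulas drop out uniformly from the primitive-algebra description together with the identities $x\wedge y=x$, $x\sqcap x=x$, and $x\setminus y=0$ valid on the non-zero ${\mathcal D}$-class of a primitive left-handed SBIA.
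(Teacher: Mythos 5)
Your proposal is correct, but it takes a genuinely different route from the paper. The paper proves the formulas intrinsically, in an arbitrary left-handed SBIA: it merges the two $p\setminus (\vee Q)$ forms via part \eqref{sba5} of Lemma~\ref{lem:basic_sba} into $(p_1\wedge p_2)\setminus (\vee (Q_1\cup Q_2))$, and then, by a case analysis on \emph{why} $\alpha\neq\beta$ (different domains, or equal domains with differently grouped elements), exhibits an element of $Q_2$ whose $\mathcal{D}$-class dominates $[p_1\wedge p_2]$ --- using Lemma~\ref{lem:compat} to convert $\wedge$ of intersections of overlapping blocks into $\sqcap$ --- and concludes with Lemma~\ref{lem:useful}; part \eqref{i:2} is handled analogously via part \eqref{sbia3} of Lemma~\ref{lem:basic_sbia}. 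You instead note that, for each fixed pair of pointed partitions, each case of the statement is an identity, embed $S$ into a product of primitive SBIAs by \cite[Theorem 3.5]{BL}, and check everything componentwise through an evaluation criterion: $e(X,\alpha,A)$ is non-zero in a primitive algebra iff the assignment realizes the pattern of $(X,\alpha)$ (non-zero exactly on $\mathrm{dom}(\alpha)$, constant on blocks, distinct on distinct blocks), and then its value is the common value of $A$. This is legitimate --- it is the same device the paper itself uses for Lemmas~\ref{lem:basic_sba}, \ref{lem:aux1} and \ref{lem:basic_sbia}, and your criterion is precisely the parenthetical observation made later in the proof of Proposition~\ref{prop:free}. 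Your route buys uniformity: the incompatibility of distinct patterns kills all off-diagonal cases at once, and part \eqref{i:3}, which the paper never argues explicitly, comes for free. What the paper's computation buys in exchange is independence from the representation theorem, plus the explicit $\mathcal{D}$-class estimates that are reused in the proof of Theorem~\ref{th:branching}.

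One sentence in your derivation of the evaluation criterion is wrong and should be repaired, although your final summarized claim is the correct one. You write that the part $X\setminus Y$ of $Q$ kills $\varphi(p)$ if a generator outside $Y$ \emph{shares the value of} $A$. In a primitive left-handed SBIA one has $a\setminus b=0$ whenever $b\neq 0$, irrespective of any comparison of values; so a generator outside $\mathrm{dom}(\alpha)$ taking \emph{any} non-zero value annihilates $e(X,\alpha,A)$, which is exactly what the word ``exactly'' in your summary encodes --- as written, the sentence and the summary contradict each other. The same imprecision recurs when you say the two elements ``share a value in $D$'' in the $\setminus$ computation for $\alpha=\beta$, $A\neq B$ (they do not: distinct blocks carry distinct values); the parenthetical reason you give there, namely that $x\setminus y=0$ because both factors are non-zero, is the right one and suffices.
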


\begin{proof}
\eqref{i:1} Suppose  $e(X,\alpha,A)=p_1\setminus (\vee Q_1)$ and $e(X,\beta,B)=p_2\setminus (\vee Q_2)$, see \eqref{eq:pq}.
By part \eqref{sba5} of Lemma \ref{lem:basic_sba},  $e(X,\alpha, A)\wedge e(X,\beta,B)= (p_1\wedge p_2)\setminus \vee (Q_1\cup Q_2)$. We consider two possible cases.

{\bf Case 1.} Suppose first  $\alpha=\beta$.  Then $Q_1=Q_2$. By left normality,  $p_1\wedge p_2=p_1$. The needed equality follows.

{\bf Case 2.} Suppose now  $\alpha\neq \beta$. We divide this case into two subcases.

{\bf Subcase 2.1.} Suppose ${\mathrm{dom}}(\alpha)\neq {\mathrm{dom}}(\beta)$. Since $x\wedge y=0$ implies $y\wedge x=0$, we may assume, without loss of generality, that there is $x\in {\mathrm{dom}}(\alpha)\setminus {\mathrm{dom}}(\beta)$. Let $A_x$ be the block of $\alpha$ containing $x$. Then $\sqcap A_x\leq x$ and thus $[p_1]\leq [x]$. On the other hand, we have $[\vee Q_2]\geq [x]$ by the construction of $Q_2$. Applying Lemma \ref{lem:useful}, it follows that $e(X,\alpha, A)\wedge e(X,\beta,B) =0$.

{\bf Subcase 2.2.} Suppose ${\mathrm{dom}}(\alpha)={\mathrm{dom}}(\beta)$. Since $\alpha\neq \beta$, there are some $x,y\in {\mathrm{dom}}(\alpha)$ such that $x$ and $y$ are in the same block of $\alpha$ but in different blocks of $\beta$, or dually. We assume the former (and the dual case is similar). By $C_{xy}$ we denote the block of $\alpha$ containing $x$ and $y$, and by $C_x$ and $C_y$ the blocks of $\beta$ containing $x$ and $y$, respectively. Then $p_1\wedge p_2$ equals  $(\sqcap C_1)\wedge (\sqcap C_2) \wedge \dots \wedge (\sqcap C_l)$ where the first several $C_i$ are precisely all the blocks of $\alpha$ which are followed by all of the blocks of $\beta$. In particular, $C_{xy}$, $C_x$ and $C_y$ are among these blocks $C_i$. By left normality, $p_1\wedge p_2=(\sqcap C_1)\wedge (\sqcap C_{\sigma(2)}) \wedge \dots \wedge (\sqcap C_{\sigma(l)})$ where $\sigma$ is any permutation of the set $\{2,\dots, l\}$. We may thus assume that the blocks $C_x$, $C_{xy}$ and $C_y$ are some $C_k, C_{k+1}$ and $C_{k+2}$, respectively (in fact, $k$ can be even chosen equal $1$ or $2$). Since $\sqcap C_x,\sqcap C_{xy}\leq x$, applying Lemma \ref{lem:compat} we get
$$
(\sqcap C_x)\wedge (\sqcap C_{xy})=(\sqcap C_x)\sqcap (\sqcap C_{xy})=\sqcap (C_x\cup C_{xy}).
$$
Similarly, $(\sqcap C_{xy})\wedge (\sqcap C_y) =  \sqcap (C_y\cup C_{xy})$. Hence,
$$
(\sqcap C_x)\wedge (\sqcap C_{xy}) \wedge (\sqcap C_y)= \sqcap (C_x\cup C_{xy}\cup C_y).
$$
It follows that $[p_1\wedge p_2]\leq [\sqcap (C_x\cup C_{xy}\cup C_y)]\leq [\sqcap (C_x \cup C_y)]$.
But since one of the elements of $Q_2$ is  $\sqcap (C_x \cup C_y)$,  $[\vee (Q_1\cup Q_2)]\geq [\vee Q_2]\geq [\sqcap (C_x \cup C_y)]$. By Lemma \ref{lem:useful}, we obtain $(p_1\wedge p_2)\setminus (\vee (Q_1\cup Q_2))=0$. This finishes the proof of part (1).

\eqref{i:2}  In the case where $\alpha\neq \beta$, the needed equality follows from \eqref{i:1}, because $x\sqcap y \leq x\wedge y$. Also, the case $\alpha=\beta$ and $A=B$ is obvious. So it is enough to consider only the case where $\alpha=\beta$ and $A\neq B$. Assume that $\alpha=\beta=\{C_1,\dots, C_m\}$. Reindexing the blocks, if needed, we may assume that $A=C_1$ and $B=C_2$. By left normality, we have $p_1=(\sqcap C_1)\wedge (\sqcap C_2) \wedge (\sqcap C_3) \wedge \dots \wedge (\sqcap C_m)$ and $p_2=(\sqcap C_2)\wedge (\sqcap C_1) \wedge (\sqcap C_3) \wedge \dots \wedge (\sqcap C_m)$. Applying part \eqref{sbia3} of Lemma \ref{lem:basic_sbia}, we obtain
\begin{align*}
p_1\sqcap p_2 = & (\sqcap (C_1\cup C_2))\wedge (\sqcap C_1) \wedge (\sqcap C_2) \wedge \dots \wedge (\sqcap C_m) \\
= & (\sqcap (C_1\cup C_2))\wedge (\sqcap C_3) \wedge \dots \wedge (\sqcap C_m).
\end{align*}
Therefore, $[p_1\sqcap p_2]\leq  [(\sqcap (C_1\cup C_2))]$. On the other hand, $\sqcap (C_1\cup C_2)$ belongs to $Q_1$, so that $[\vee Q_1]\geq [(\sqcap (C_1\cup C_2))]$. Thus, in view of  Lemma \ref{lem:useful}, $e(X,\alpha,A)\sqcap  e(X,\alpha,B)= 0$.
\end{proof}

\begin{corollary}\label{cor:d} Let $e(X,\alpha, A)$ and $e(X,\beta, B)$ be two non-zero elementary elements.
Then
\begin{enumerate}
\item \label{dec1} $e(X,\alpha, A)=e(X,\beta, B)$ if and only if $\alpha=\beta$ and $A=B$.
\item \label{dec2} $e(X,\alpha,A)\mathrel{\mathcal{D}} e(X,\beta,B)$ if and only if $\alpha=\beta$.
\end{enumerate}
\end{corollary}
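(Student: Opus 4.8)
The plan is to derive both equivalences directly from Proposition \ref{prop:crucial}, using the standing hypothesis that the two elementary elements are non-zero to rule out the degenerate (zero) branches of that proposition.

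For part \eqref{dec1}, the implication that $\alpha = \beta$ and $A = B$ force equality is immediate from the defining formula \eqref{eq:pq}, since in that case the two elements are built from identical data. For the converse I would exploit idempotency of $\sqcap$: assuming $e(X,\alpha,A) = e(X,\beta,B)$, I compute
$$e(X,\alpha,A)\sqcap e(X,\beta,B) = e(X,\alpha,A)\sqcap e(X,\alpha,A) = e(X,\alpha,A),$$
which is non-zero by hypothesis. Part \eqref{i:2} of Proposition \ref{prop:crucial} then forces $\alpha = \beta$ and $A = B$, as a non-zero intersection is possible only in that case.

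For part \eqref{dec2}, I would first recall that in a left-handed skew lattice the $\mathcal D$-classes are left flat, so $a \mathrel{\mathcal D} b$ holds exactly when $a\wedge b = a$ and $b \wedge a = b$. If $\alpha = \beta$, then two applications of part \eqref{i:1} of Proposition \ref{prop:crucial} (once in each order) yield $e(X,\alpha,A)\wedge e(X,\alpha,B) = e(X,\alpha,A)$ and $e(X,\alpha,B)\wedge e(X,\alpha,A) = e(X,\alpha,B)$, giving the $\mathcal D$-relation. Conversely, if the elements are $\mathcal D$-related, then $e(X,\alpha,A)\wedge e(X,\beta,B) = e(X,\alpha,A)$ is non-zero, and part \eqref{i:1} of Proposition \ref{prop:crucial} shows this is possible only when $\alpha = \beta$.

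I expect no genuine obstacle here: the corollary is essentially a bookkeeping consequence of Proposition \ref{prop:crucial} combined with the left-flat description of $\mathcal D$. The one point requiring care is the consistent use of the non-zeroness assumption, which is exactly what licenses discarding the ``otherwise'' alternatives in both parts of that proposition.
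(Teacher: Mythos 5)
Your proposal is correct and follows essentially the same route as the paper: part \eqref{dec1} via idempotency of $\sqcap$ together with part \eqref{i:2} of Proposition \ref{prop:crucial}, and part \eqref{dec2} via the left-flat characterization of $\mathcal{D}$ together with part \eqref{i:1} of that proposition. The paper merely phrases part \eqref{dec1} as a proof by contradiction and compresses part \eqref{dec2} into one line, so the only difference is your (welcome) extra explicitness about checking $a\wedge b=a$ and $b\wedge a=b$ in both orders.
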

\begin{proof} (1) Assume $e(X,\alpha, A)=e(X,\beta, B)$ but $\alpha=\beta$ and $A=B$ does not hold. Then by part~\eqref{i:2} of Proposition \ref{prop:crucial} we get
$0=e(X,\alpha, A)\sqcap e(X,\beta, B)=e(X,\alpha, A)\sqcap e(X,\alpha, A)=e(X,\alpha, A)$, which is impossible by our assumption.

(2) By part \eqref{i:1} of Proposition \ref{prop:crucial}  $e(X,\alpha, A)\mathrel{\mathcal D}e(X,\beta,B)$ is equivalent to $e(X,\alpha,A)\wedge e(X,\beta,B)=e(X,\alpha,A)$.
\end{proof}

The next corollary witnesses certain `rigidity' of the behaviour of elementary elements, particularly of those which are  ${\mathcal D}$-related.

\begin{corollary}\label{cor:12}\mbox{}
\begin{enumerate}
\item \label{iii1} $e(X,\alpha, A)=0$ if and only if $e(X,\alpha,B)=0$ for all $B\in \alpha$.
\item \label{iii3} Suppose $e(X,\alpha, A)\neq 0$. Then  $e(X,\alpha, B)\neq 0$ for all $B\in \alpha$. Furthermore,  for any $B,C\in \alpha$, $e(X,\alpha, B)=e(X,\alpha, C)$ if and only if $B=C$. Consequently, the cardinality of the ${\mathcal D}$-class $[e(X,\alpha, A)]$ equals ${\mathrm{rank}}(\alpha)$. 
\item \label{iii4} $e(X,\alpha, A)=e(X,\beta, B)$ if and only if either $e(X,\alpha,A)=e(X,\beta,B)=0$, or, otherwise, $\alpha=\beta$ and $A=B$.
\end{enumerate}
\end{corollary}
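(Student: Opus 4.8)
The plan is to obtain all three parts as formal consequences of Proposition~\ref{prop:crucial} and Corollary~\ref{cor:d}, the only genuinely new point being the bookkeeping of the zero element that Corollary~\ref{cor:d} set aside by hypothesis.

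For part~\eqref{iii1} I will invoke part~\eqref{i:1} of Proposition~\ref{prop:crucial}. Fix a block $B\in\alpha$; since $e(X,\alpha,B)$ and $e(X,\alpha,A)$ share the support $\alpha$, that identity gives $e(X,\alpha,B)\wedge e(X,\alpha,A)=e(X,\alpha,B)$. One implication is trivial because $A\in\alpha$; for the other, if $e(X,\alpha,A)=0$ then $e(X,\alpha,B)=e(X,\alpha,B)\wedge 0=0$ by the absorbing property of the zero. Alternatively, one checks that the ${\mathcal D}$-class $[e(X,\alpha,B)]$ does not depend on the marked block $B$, while $\{0\}$ is a singleton ${\mathcal D}$-class, so the elements $e(X,\alpha,B)$ vanish or not simultaneously.

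Part~\eqref{iii3} follows next. Its first clause is the contrapositive of part~\eqref{iii1}. For the second clause, all the $e(X,\alpha,B)$ with $B\in\alpha$ are now non-zero, so part~\eqref{dec1} of Corollary~\ref{cor:d} applies and shows that $e(X,\alpha,B)=e(X,\alpha,C)$ exactly when $B=C$. For the cardinality, the blocks of $\alpha$ therefore index ${\mathrm{rank}}(\alpha)$ pairwise distinct non-zero elements, pairwise ${\mathcal D}$-related by part~\eqref{dec2} of Corollary~\ref{cor:d}; conversely, again by part~\eqref{dec2}, any non-zero elementary element lying in $[e(X,\alpha,A)]$ must have support $\alpha$ and hence coincide with one of the $e(X,\alpha,B)$. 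This identifies the class with the family $\{e(X,\alpha,B):B\in\alpha\}$ and yields the count.

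Finally, part~\eqref{iii4} simply removes the non-vanishing hypothesis from part~\eqref{dec1} of Corollary~\ref{cor:d}. If both elements are $0$, or if $\alpha=\beta$ and $A=B$, they are equal. Conversely, if $e(X,\alpha,A)=e(X,\beta,B)$ and they are not both $0$, then equality forces both to be non-zero and part~\eqref{dec1} of Corollary~\ref{cor:d} delivers $\alpha=\beta$ and $A=B$. The one step that I expect to require care is the exhaustion argument in part~\eqref{iii3}: establishing that the ${\mathcal D}$-class contains no elements beyond the listed elementary ones relies on reading part~\eqref{dec2} of Corollary~\ref{cor:d} as a statement about elementary elements, and it is here that the interaction between the ambient algebra and the subalgebra generated by $X$ must be handled honestly.
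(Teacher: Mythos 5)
Your proof is correct and takes essentially the same route as the paper: part \eqref{iii1} is the paper's exact computation $e(X,\alpha,B)=e(X,\alpha,B)\wedge e(X,\alpha,A)$ via part \eqref{i:1} of Proposition \ref{prop:crucial}, and parts \eqref{iii3} and \eqref{iii4} are, as in the paper, formal consequences of part \eqref{iii1} and Corollary \ref{cor:d} (the paper's proof of \eqref{iii3} is literally ``part (1) plus Corollary \ref{cor:d}\eqref{dec1}'', and of \eqref{iii4} ``follows from \eqref{iii1} and \eqref{iii3}'').

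The one place you flag --- the exhaustion step in the cardinality clause of \eqref{iii3} --- is a genuine subtlety, and you are right that your argument only identifies the \emph{elementary} elements of the ${\mathcal D}$-class with the family $\{e(X,\alpha,B)\colon B\in\alpha\}$; but note that the paper's own proof does not close this either. Read literally in the ambient algebra $S$ the clause is in fact false: take $S={\bf 3}_L$ and $X=\{1\}$; then by Remark \ref{rem:1} $e(X,\{\{1\}\},\{1\})=1$, whose ${\mathcal D}$-class in $S$ is $\{1,2\}$ of cardinality $2$, while ${\mathrm{rank}}(\alpha)=1$. The claim must be read inside $\langle X\rangle$ (where the class is $\{1\}$, of the right size), which is consistent with its only later use in Theorem \ref{th:combinatorial}, where $S$ itself is generated by $X_n$. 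The honest completion of the exhaustion then comes from Theorem \ref{th:normal_forms}, which is proved \emph{after} this corollary: every element of $\langle X\rangle$ is a normal form $\vee{\mathcal E}$, and if $\vee{\mathcal E}$ is ${\mathcal D}$-related to $e(X,\alpha,A)$ then, by left flatness, $(\vee{\mathcal E})\wedge e(X,\alpha,A)=\vee{\mathcal E}$, which by the normal-form calculus forces every clause of ${\mathcal E}$ to have support $(X,\alpha)$; admissibility then leaves a single clause $e(X,\alpha,B)$. So your instinct is sound: at the point where the corollary is stated one should either interpret the ``consequently'' clause as counting the elementary elements with support $(X,\alpha)$, or defer the identification of this set with the full ${\mathcal D}$-class of $\langle X\rangle$ to the normal-form theorem; your write-up, which proves everything else and isolates exactly this point, is if anything more careful than the paper's two-line proof.
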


\begin{proof}
\eqref{iii1} If $e(X,\alpha, A)=0$ then $e(X,\alpha,B)=e(X,\alpha,B)\wedge e(X,\alpha, A)=e(X,\alpha,B)\wedge 0=0$ for any $B\in \alpha$.

\eqref{iii3} This follows from part (1) above and part \eqref{dec1} of Corollary \ref{cor:d}.

\eqref{iii4} follows from \eqref{iii1} and  \eqref{iii3}.
\end{proof}

\subsection{The Decomposition Rule}

Let $e_1,\dots, e_n\in S$ and assume that $e_i\wedge e_j=0$ (and thus $e_j\wedge e_i=0$) for each $1\leq i < j\leq n$.
Then these elements are said to form an {\em orthogonal family} and the join $e_1\vee\dots\vee e_n$ is called an {\em orthogonal join}. Since $e_i\vee e_j = e_j\vee e_i$ for each $1\leq i < j\leq n$,  for every permutation $\sigma$ of $\{1,2,\dots, n\}$,  $e_{\sigma(a)}\vee \dots \vee e_{\sigma(n)}=e_1\vee\dots\vee e_n$.  It is thus correct to denote the orthogonal join $e_1\vee\dots\vee e_n$ by $\vee \{e_1,\dots,e_n\}$.

The following is one of the main result of the paper.

\begin{theorem}[Decomposition Rule] \label{th:branching} Let $S$ be a left-handed SBIA, $X,Y$  be finite non-empty subsets of $S$  such that $X\subseteq Y$. Let, further, $\alpha$ be a partition of a non-empty subset of $X$ and $A\in\alpha$.
Then 
\begin{equation}\label{eq:branching_d}
e(X,\alpha, A) = \vee\{e\left(Y,\beta, A\!\uparrow_{\alpha}^{\beta}\right)\colon (X,\alpha)\preceq (Y,\beta)\},
\end{equation}
the latter join  being orthogonal.  
\end{theorem}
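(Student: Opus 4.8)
The plan is to first dispose of orthogonality and then reduce the displayed equality to a computation in primitive algebras. Orthogonality is immediate: the partitions $\beta$ indexing the join on the right-hand side of \eqref{eq:branching_d} are pairwise distinct partitions of subsets of $Y$, so part \eqref{i:1} of Proposition~\ref{prop:crucial} gives $e(Y,\beta,A\!\uparrow_\alpha^\beta)\wedge e(Y,\beta',A\!\uparrow_\alpha^{\beta'})=0$ whenever $\beta\neq\beta'$. Hence the join is genuinely orthogonal and the notation $\vee\{\cdots\}$ is justified. For the equality itself, I observe that, once the combinatorial data $X\subseteq Y$, $\alpha$ and $A$ are fixed, both sides of \eqref{eq:branching_d} are \emph{terms} in the finitely many elements of $Y$ (the join having been fixed in some order). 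Since equations are preserved under products and subalgebras, and every left-handed SBIA embeds into a product of primitive left-handed SBIAs by \cite[Theorem 3.5]{BL}, it suffices to verify \eqref{eq:branching_d} in an arbitrary primitive left-handed SBIA $P=M\cup\{0\}$ under an arbitrary assignment $v\colon Y\to P$ of values to the elements of $Y$.

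The heart of the argument is a clean description of how an elementary element evaluates in $P$. Recall that in $P$ one has $x\sqcap y=x$ if $x=y$ and $0$ otherwise, $x\wedge y=x$ if $x,y\neq 0$ and $0$ otherwise, and (for $x\neq0$) $x\setminus y=x$ if $y=0$ and $0$ otherwise. Feeding these into the definition \eqref{eq:pq}, I compute, for any pointed partition $(Z,\gamma,C)$ with $Z\subseteq Y$: writing $c(B)$ for the common $v$-value of a block $B$ all of whose elements receive the same value, the factor $p$ is nonzero exactly when every block of $\gamma$ is \emph{monochromatic} (constant and nonzero under $v$), in which case $p=c(C)$; and, assuming this, $p\setminus(\vee Q)$ is nonzero exactly when moreover the blocks of $\gamma$ carry pairwise distinct colours and every element of $Z\setminus\mathrm{dom}(\gamma)$ is sent to $0$. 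Summarising, $e(Z,\gamma,C)\neq 0$ if and only if the nonzero part of $v|_Z$ has support exactly $\mathrm{dom}(\gamma)$ and its equal-value classes are precisely the blocks of $\gamma$; in that case $e(Z,\gamma,C)=c(C)$.

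With this in hand the verification is short. Fix $v$ and let $\beta_0$ be the partition of $\mathrm{supp}(v|_Y)=\{y\in Y\colon v(y)\neq0\}$ into equal-value classes. By the description above, among \emph{all} pointed partitions over $Y$ only those with underlying partition $\beta_0$ can evaluate to something nonzero, so at most one summand on the right of \eqref{eq:branching_d} is nonzero. If $e(X,\alpha,A)\neq0$, then $v|_X$ realises $\alpha$, i.e.\ $\mathrm{supp}(v|_X)=\mathrm{dom}(\alpha)$ with equal-value classes the blocks of $\alpha$; reading off the defining conditions of $\preceq$ then shows $(X,\alpha)\preceq(Y,\beta_0)$, that $\beta_0$ restricts to $\alpha$ on $\mathrm{dom}(\alpha)$, and that the block $A\!\uparrow_\alpha^{\beta_0}$ of $\beta_0$ containing $A$ has colour $c(A)$; hence the right-hand side equals $e(Y,\beta_0,A\!\uparrow_\alpha^{\beta_0})=c(A)=e(X,\alpha,A)$. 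Conversely, if some summand $e(Y,\beta,A\!\uparrow_\alpha^\beta)$ were nonzero, then $\beta=\beta_0$ and $(X,\alpha)\preceq(Y,\beta_0)$; tracing the conditions of $\preceq$ backwards forces $\mathrm{supp}(v|_X)=\mathrm{dom}(\alpha)$ with equal-value classes the blocks of $\alpha$, whence $e(X,\alpha,A)\neq0$. Thus both sides vanish simultaneously and agree otherwise.

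The step I expect to require the most care is the evaluation lemma of the second paragraph: getting the bookkeeping of the three regimes ($p=0$; $p\neq0$ but $\vee Q$ too large; $p\neq0$ and $\vee Q=0$) exactly right, and correctly translating the containment conditions defining $\preceq$ into the support-and-colour data of $v$. Everything else is formal, and the reduction to primitive algebras is precisely the device already used in the proofs of Lemmas~\ref{lem:basic_sba} and \ref{lem:basic_sbia}.
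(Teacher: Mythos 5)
Your proof is correct, but it takes a genuinely different route from the paper's. The paper proves \eqref{eq:branching_d} purely equationally: it first treats the case $|Y\setminus X|=1$, splitting $e(X,\alpha,A)$ as $(e(X,\alpha,A)\setminus t)\vee(e(X,\alpha,A)\wedge t)$, identifying the first piece with $e(Y,\alpha,A)$, and then peeling the piece $e(X,\alpha,A)\wedge t$ into the $k+1$ summands that come from adjoining $t$ either to an existing block or as a new singleton block, all via the identities of Lemmas~\ref{lem:basic_sba}, \ref{lem:aux1} and \ref{lem:basic_sbia}; the general case follows by adjoining one element at a time, using that the intermediate partition over $X\cup\{t\}$ below a given $(Y,\gamma)$ is unique. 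You instead observe that, once the combinatorial data are fixed, \eqref{eq:branching_d} is an identity of the variety, reduce to primitive algebras by the embedding theorem of Bignall--Leech (the same device the paper itself uses for Lemmas~\ref{lem:basic_sba} and \ref{lem:basic_sbia}), and verify the identity pointwise through a semantic description of elementary elements: $e(Z,\gamma,C)$ evaluates nontrivially exactly when the support of the assignment on $Z$ is $\mathrm{dom}(\gamma)$ with equal-value classes precisely the blocks of $\gamma$, the value then being the common value on $C$. That description is sound --- it agrees with the parenthetical remark in the paper's own proof of Proposition~\ref{prop:free} --- and your bookkeeping with the containment order $\preceq$ checks out, including the degenerate assignment vanishing on all of $Y$, where both sides of \eqref{eq:branching_d} are $0$. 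Two small points are worth flagging: by Birkhoff's completeness theorem for equational logic your semantic argument does yield derivability from the defining identities, so nothing is lost for later applications such as the implication (3)$\Rightarrow$(1) in Proposition~\ref{prop:free}; and, strictly, \cite[Theorem 3.5]{BL} gives primitive factors not stated to be left-handed, so you should either note that the projections of a left-handed algebra land in left-handed primitive subalgebras, or simply run your (equally easy) computation in an arbitrary primitive SBIA. As for what each approach buys: yours is shorter and lays bare the ``colouring'' semantics of elementary elements that implicitly drives the whole paper, while the paper's computation is self-contained within the algebra at hand, provides explicit identity manipulations (illustrated by its worked example), and its one-element-at-a-time decomposition directly mirrors the branching of the partition tree exploited in the final section.
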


\begin{proof}   That the family $\{e\left(Y,\beta, A\!\uparrow_{\alpha}^{\beta}\right)\colon (X,\alpha)\preceq (Y,\beta)\}$ is orthogonal follows from part~\eqref{i:1} of Proposition~\ref{prop:crucial}. We turn to proving equality \eqref{eq:branching_d}.

Suppose first that $|Y\setminus X|=1$. Then $Y=X\cup\{t\}$ where $t\not\in X$. Assume that $\alpha=\{A_1,\dots, A_k\}$. The partitions of subsets of $Y$ which contain $(X,\alpha)$ are $(Y,\alpha)$ and the  $k+1$ partitions $(Y,\beta_{i})$, $1\leq i\leq k+1$, with domain ${\mathrm{dom}}(\alpha)\cup \{t\}$ where
$$\beta_i = \{A_1,\dots, A_{i-1}, A_i\cup \{t\}, A_{i+1},\dots, A_k\} \text{ for } 1\leq i\leq k \text{ and }
$$
$$
\beta_{k+1}=\{A_1,\dots, A_k,\{t\}\}.
$$

If $A=A_j$ then
$$A\!\uparrow_{\alpha}^{\beta_i}=\left\lbrace \begin{array}{ll} A & \text{if } i\neq j;\\
A\cup\{t\}& \text{otherwise}.\end{array}\right.
$$

Denote the $(p\setminus (\vee Q))$-form of the element $e(Y,\beta_i,A\!\uparrow_{\alpha}^{\beta_i})$, where $1\leq i\leq k+1$, by
$p_i\setminus (\vee Q_i)$. We also write $(p\setminus (\vee Q))$ for the $(p\setminus (\vee Q))$-form of $e(X,\alpha,A)$.
We divide the following considerations into several steps.

{\bf Step 1.} By part \eqref{sba3} of Lemma \ref{lem:basic_sba},
\begin{equation}\label{eq:step1.1}
e(X,\alpha, A)=(e(X,\alpha, A)\setminus t)\vee  (e(X,\alpha, A)\wedge t)
\end{equation}
and by part \eqref{sba4} of Lemma \ref{lem:basic_sba},  $e(X,\alpha, A)\setminus t=e(Y,\alpha,A)$. Thus it remains to show that
\begin{equation}\label{eq:step1.2}
e(X,\alpha, A)\wedge t = \vee_{i=1}^{k+1} e(Y,\beta_i, A\!\uparrow_{\alpha}^{\beta_i}).
\end{equation}

{\bf Step 2.}
By part \eqref{sba2} of Lemma \ref{lem:basic_sba},
\begin{equation}\label{eq:step2.1}
e(X,\alpha, A)\wedge t=(p\wedge t)\setminus (\vee Q).
\end{equation}
Put $R=\{\sqcap(A_1\cup \{t\}), \dots, \sqcap(A_k\cup\{t\})\}$. Since, by part \eqref{sba2} of Lemma \ref{lem:basic_sba},
\begin{equation}\label{eq:step2.1a}(p\wedge t)\setminus (\vee (Q\cup R))=\big((p\wedge t)\setminus (\vee Q)\big)\setminus (\vee R),
\end{equation}
applying part \eqref{sba3} of the same lemma, we obtain
\begin{equation}\label{eq:step2.2}
(p\wedge t)\setminus (\vee Q)=\big((p\wedge t)\setminus (\vee (Q\cup R))\big)\vee \big(((p\wedge t)\setminus (\vee Q))\wedge (\vee R)\big).
\end{equation}
Observe that $Q\cup R= Q_{k+1}$, so that
\begin{equation}\label{eq:step2.3}
(p\wedge t)\setminus (\vee (Q\cup R))=e(Y,\beta_{k+1},A).
\end{equation}
So we are left to show that
\begin{equation}\label{eq:step2.4}
\left((p\wedge t)\setminus (\vee Q)\right)\wedge (\vee R) = \vee_{i=1}^{k} e(Y,\beta_i, A\!\uparrow_{\alpha}^{\beta_i}).
\end{equation}

{\bf Step 3.}
By distributivity and the definition of $R$, it is enough to show that, for each $i=1,\dots, k$,
\begin{equation}\label{eq:step3.1}
((p\wedge t)\setminus (\vee Q))\wedge (\sqcap(A_i\cup \{t\}))=e(Y,\beta_i, A\!\uparrow_{\alpha}^{\beta_i}).
\end{equation}
By part \eqref{sba2} of Lemma \ref{lem:basic_sba}, the left-hand side of the  expression above equals
\begin{equation}\label{eq:step3.2}
\big(p\wedge t\wedge (\sqcap(A_i\cup \{t\}))\big)\setminus (\vee Q).
\end{equation}
But since $A_i\subseteq A_i\cup \{t\}$ and $\{t\}\subseteq A_i\cup \{t\}$ and also $x\wedge y=y\wedge x =x$ whenever $x\leq y$, we obtain
\begin{equation}\label{eq:step3.3}
p\wedge t\wedge (\sqcap(A_i\cup \{t\}))=p_i.
\end{equation}
It remains to show that
\begin{equation}\label{eq:remains1}
p_i\setminus (\vee Q)=p_i\setminus (\vee Q_i).
\end{equation}

{\bf Step 4.}
 Note that the set $Q_i$ can be obtained from $Q$ by replacing each $\sqcap (A_i\cup A_j)$, where $i\neq j$, by $\sqcap (A_i\cup A_j\cup \{t\})$.
By part \eqref{sbia4} of Lemma \ref{lem:basic_sbia},
\begin{equation}\label{eq:step4.1}
(\sqcap(A_i\cup \{t\}))\setminus (\sqcap (A_i\cup A_j)) = (\sqcap(A_i\cup \{t\}))\setminus (\sqcap (A_i\cup A_j\cup \{t\}))
\end{equation}
and thus, after $k-1$ applications of Lemma \ref{lem:aux1}, \eqref{eq:remains1} follows.

Consider now the case where $|Y\setminus X|>1$. It is enough to assume that $Y=X\cup \{t,s\}$ where $t,s\not\in X$.
Let $\alpha$ be a partition of a non-empty subset of $X$ and $A\in\alpha$.  By \eqref{eq:branching_d},
$$
e(X,\alpha,A)=\vee \{e(X\cup \{t\}, \beta, A\!\uparrow_{\alpha}^{\beta})\colon (X,\alpha)\preceq (X\cup \{t\},\beta)\}$$
and for each $(X\cup \{t\},\beta)$ satisfying  $(X,\alpha)\preceq (X\cup \{t\},\beta)$, again by \eqref{eq:branching_d},
$$
e(X\cup \{t\}, \beta, A\!\uparrow_{\alpha}^{\beta}) = \vee\{e(Y, \gamma, (A\!\uparrow_{\alpha}^{\beta})\!\uparrow_{\beta}^{\gamma})\colon  (X\cup \{t\},\beta)\preceq (Y,\gamma)\}.
$$
But for any $(Y,\gamma)$  satisfying  $(X,\alpha)\preceq (Y,\gamma)$ there a unique partition $\beta$ of a subset of $X\cup \{t\}$ satisfying $(X,\alpha)\preceq (X\cup \{t\},\beta)\preceq (Y,\gamma)$: this is the restriction of $\gamma$ to $X\cup \{t\}$. Furthermore, $A\!\uparrow_{\alpha}^{\gamma}=(A\!\uparrow_{\alpha}^{\beta})\!\uparrow_{\beta}^{\gamma}$. Therefore
$$
e(X,\alpha,A)=\vee \{e(Y, \gamma, A\!\uparrow_{\alpha}^{\gamma})\colon (X,\alpha)\preceq (Y,\gamma)\},
$$
as required.
\end{proof}

We illustrate the proof of Theorem \ref{th:branching} by an example.

\begin{example}{\em
Let $X=\{x_1,x_2\}$ and $Y=\{x_1,x_2,x_3\}$. Let $\alpha=x_1|x_2$ and $\{x_1\}$ be the marked block. Following the proof of Theorem \ref{th:branching}, we show that
\begin{equation}\label{eq:aim}
e(X,\alpha, \{x_1\}) = e(Y,\alpha, \{x_1\})\vee e(Y,\beta_1, \{x_1,x_3\})\vee e(Y,\beta_2, \{x_1\})\vee e(Y,\beta_3, \{x_1\}),
\end{equation}
where $\beta_1=x_1x_3|x_2$, $\beta_2=x_1|x_2x_3$ and $\beta_3=x_1|x_2|x_3$. Note that
$$
\begin{array}{lcl}
e(X,\alpha, \{x_1\})& = & (x_1\wedge x_2)\setminus (x_1\sqcap x_2);\\
e(Y,\alpha, \{x_1\})& = & (x_1\wedge x_2)\setminus ((x_1\sqcap x_2)\vee x_3);\\
e(Y,\beta_1, \{x_1,x_3\}) & = & ((x_1\sqcap x_3)\wedge x_2)\setminus (x_1\sqcap x_2\sqcap x_3);\\
e(Y,\beta_2, \{x_1\})& = & (x_1\wedge (x_2\sqcap x_3))\setminus (x_1\sqcap x_2\sqcap x_3);\\
e(Y,\beta_3, \{x_1\})& = &(x_1\wedge x_2\wedge x_3)\setminus ((x_1\sqcap x_2)\vee (x_1\sqcap x_3)\vee (x_2\sqcap x_3)).
\end{array}
$$

{\bf Step 1.} Equation \eqref{eq:step1.1} is written as
$$e(X,\alpha, \{x_1\})=(e(X,\alpha, \{x_1\})\setminus x_3) \vee  (e(X,\alpha, \{x_1\})\wedge x_3).$$
Observe that $e(X,\alpha, \{x_1\})\setminus x_3 = e(Y,\alpha, \{x_1\})$. So we are left to show \eqref{eq:step1.2}, which in our case is
$$e(X,\alpha, \{x_1\})\wedge x_3 = e(Y,\beta_1, \{x_1,x_3\})\vee e(Y,\beta_2, \{x_1\})\vee e(Y,\beta_3, \{x_1\}).$$

{\bf Step 2.} Equation \eqref{eq:step2.1} is in our case
$$e(X,\alpha, \{x_1\})\wedge x_3 = (x_1\wedge x_2\wedge x_3)\setminus (x_1\sqcap x_2).$$
We set $R=\{x_1\sqcap x_3, x_2\sqcap x_3\}$. We then write \eqref{eq:step2.1a}:
$$
 (x_1\wedge x_2\wedge x_3)\setminus ((x_1\sqcap x_2)\vee (x_1\sqcap x_3)\vee (x_2\sqcap x_3))=(x_1\wedge x_2\wedge x_3)\setminus (x_1\sqcap x_2))\setminus  ((x_1\sqcap x_2)\vee (x_2\sqcap x_3)).
$$
Equation \eqref{eq:step2.2} is in our case
\begin{multline*}
 (x_1\wedge x_2\wedge x_3)\setminus (x_1\sqcap x_2) = \\ \Big((x_1\wedge x_2\wedge x_3)\setminus \big((x_1\sqcap x_2)\vee (x_1\sqcap x_2)\vee (x_2\sqcap x_3)\big)\Big) \vee \\ \Big(\big((x_1\wedge x_2\wedge x_3)\setminus (x_1\sqcap x_2)\big)\wedge \big((x_1\sqcap x_3)\vee (x_2\sqcap x_3)\big)\Big).
\end{multline*}
Equation \eqref{eq:step2.3} is in our case
$$
(x_1\wedge x_2\wedge x_3)\setminus \big((x_1\sqcap x_2)\vee (x_1\sqcap x_2)\vee (x_2\sqcap x_3)\big)=e(Y,\beta_3, \{x_1\}),
$$
so that we are left to show equation \eqref{eq:step2.4}:
$$
\big((x_1\wedge x_2\wedge x_3)\setminus (x_1\sqcap x_2)\big)\wedge \big((x_1\sqcap x_3)\vee (x_2\sqcap x_3)\big)=e(Y,\beta_1, \{x_1,x_3\}) \vee e(Y,\beta_2, \{x_1\}).
$$

{\bf Step 3.} In this step we observe that it is enough to show equalities \eqref{eq:step3.1} which in our case are
$$
\big((x_1\wedge x_2\wedge x_3)\setminus (x_1\sqcap x_2)\big)\wedge (x_1\sqcap x_3) =  ((x_1\sqcap x_3)\wedge x_2)\setminus (x_1\sqcap x_2\sqcap x_3) \text{ and }
$$
$$
\big((x_1\wedge x_2\wedge x_3)\setminus (x_1\sqcap x_2)\big)\wedge (x_2\sqcap x_3) =  (x_1\wedge (x_2\sqcap x_3))\setminus (x_1\sqcap x_2\sqcap x_3).
$$
We show the first equality, the second one being similar. The expression equal to its left-hand side, given in \eqref{eq:step3.2}, is $\big((x_1\wedge x_2\wedge x_3)\wedge (x_1\sqcap x_3)\big) \setminus (x_1\sqcap x_2).$
 In \eqref{eq:step3.3} we observe that $(x_1\wedge x_2\wedge x_3)\wedge (x_1\sqcap x_3)=(x_1\sqcap x_3)\wedge x_2$. It remains to show \eqref{eq:remains1} which in our case is
$$
\big((x_1\sqcap x_3)\wedge x_2\big)\setminus (x_1\sqcap x_2) =  \big((x_1\sqcap x_3)\wedge x_2\big)\setminus (x_1\sqcap x_2\sqcap x_3).
$$
{\bf Step 4.} Finally, in this step we notice that the above equality holds due to equality \eqref{eq:step4.1} which tells us that $(x_1\sqcap x_3)\setminus (x_1\sqcap x_2) =  (x_1\sqcap x_3)\setminus (x_1\sqcap x_2\sqcap x_3).$  Equation \eqref{eq:aim} is verified.
}
\end{example}

The following is an important consequence of the Decomposition Rule.

\begin{corollary}\label{cor:x} Let $x\in S$ and $Y\subseteq S$ be a finite subset containing $x$.
Then
\begin{equation}\label{eq:branching_x}
x=\vee\left\{e(Y,\beta, \{x\}\!\uparrow_{\{\{x\}\}}^{\beta})\colon x\in  \mathrm{dom}(\beta)\right\}.
\end{equation}
\end{corollary}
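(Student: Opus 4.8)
The plan is to exhibit the single generator $x$ as the simplest possible elementary element, namely the one supported on the singleton partition of $\{x\}$, and then read off the statement from the Decomposition Rule.

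First I would take $X = \{x\}$. The only partition of a non-empty subset of $X$ is $\alpha = \{\{x\}\}$, whose unique (and necessarily marked) block is $A = \{x\}$, and I would check that $e(\{x\}, \{\{x\}\}, \{x\}) = x$. Indeed, from \eqref{eq:pq} one has $\sqcap A = x$, so $p = x \wedge x = x$ by idempotency; and since $\mathrm{rank}(\alpha) = 1$ with $\mathrm{dom}(\alpha) = X$, Remark \ref{rem:1} gives $Q = \varnothing$, whence $e(\{x\}, \{\{x\}\}, \{x\}) = p \setminus (\vee \varnothing) = x$.

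The main step is then a direct application of Theorem \ref{th:branching} to this $X = \{x\}$, the partition $\alpha = \{\{x\}\}$, the block $A = \{x\}$, and the given finite set $Y \supseteq X$, which rewrites $x$ as the orthogonal join $\vee\{e(Y, \beta, \{x\}\!\uparrow_{\{\{x\}\}}^{\beta}) : (\{x\}, \{\{x\}\}) \preceq (Y, \beta)\}$.

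It then remains to verify that this index set is exactly the one appearing in \eqref{eq:branching_x}, i.e.\ that $(\{x\}, \{\{x\}\}) \preceq (Y, \beta)$ is equivalent to $x \in \mathrm{dom}(\beta)$. Unwinding the definition of the containment order with $Z = \mathrm{dom}(\alpha) = \{x\}$, the nesting condition becomes $\{x\} \subseteq \mathrm{dom}(\beta) \subseteq \{x\} \cup (Y \setminus \{x\}) = Y$; the right inclusion holds automatically because $\beta$ is a partition of a subset of $Y$, and the left inclusion says precisely $x \in \mathrm{dom}(\beta)$, while the equivalence-relation clause is vacuous since $\mathrm{dom}(\alpha)$ is a singleton. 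I do not anticipate any genuine obstacle here; the only point that deserves a moment's attention is this reduction of the order $\preceq$ to plain membership of $x$ in $\mathrm{dom}(\beta)$.
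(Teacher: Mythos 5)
Your proposal is correct and follows essentially the same route as the paper: the paper's entire proof consists of the single observation that $(\{x\},\{\{x\}\})\preceq (Y,\beta)$ if and only if $x\in \mathrm{dom}(\beta)$, with the identity $e(\{x\},\{\{x\}\},\{x\})=x$ and the appeal to Theorem~\ref{th:branching} left implicit. Your explicit verifications of these two implicit steps (via Remark~\ref{rem:1} and idempotency, and the unwinding of the containment order) are accurate and simply fill in details the paper omits.
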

\begin{proof} It is enough to observe that a partition $(Y,\beta)$ contains the partition $(\{x\}, \{\{x\}\})$  if and only if $x\in {\mathrm{dom}}(\beta)$.
\end{proof}

\begin{example} Let $x_1,x_2,x_3\in S$. Then
\begin{align*}
x_1 = & \,\, (x_1\setminus (x_2\vee x_3))  \vee
 ((x_1\sqcap x_2)\setminus x_3) \vee \\
& ((x_1\wedge x_2)\setminus ((x_1\sqcap x_2)\vee x_3))\vee
 ((x_1\sqcap x_3)\setminus x_2) \vee \\
& ((x_1\wedge x_3)\setminus ((x_1\sqcap x_3)\vee x_2))\vee
 ((x_1\sqcap x_2\sqcap x_3)) \vee \\
& (((x_1\sqcap x_2)\wedge x_3)\setminus (x_1\sqcap x_2\sqcap x_3)) \vee
  (((x_1\sqcap x_3)\wedge x_2)\setminus (x_1\sqcap x_2\sqcap x_3)) \vee \\
&  ((x_1\wedge (x_2\sqcap x_3))\setminus (x_1\sqcap x_2\sqcap x_3)) \vee
 ((x_1\wedge x_2\wedge x_3)\setminus ((x_1\sqcap x_2)\vee (x_1\sqcap x_3)\vee (x_2\sqcap x_3))).
\end{align*}
\end{example}

By Corollary \ref{cor:d}, the ${\mathcal D}$-class $[e(X,\alpha, A)]$ of $e(X,\alpha, A)$ does not depend on $A$. We thus denote it by $e(X,\alpha)$.  The following is a consequence of Theorem \ref{th:branching}, since ${\mathcal D}$ is a congruence on the SBA reduct of $S$.

\begin{corollary}[Decomposition Rule for $S/{\mathcal D}$]\label{cor:21}
Let $S$ be a left-handed SBIA and $X\subseteq Y$ be  finite non-empty subsets of $S$. Let, further, $\alpha$ be a partition of a non-empty subset of $X$. Then, in $S/{\mathcal D}$,

\begin{equation}\label{eq:branching1}
e(X,\alpha)=\vee\{e(Y,\beta)\colon (X,\alpha)\preceq (Y,\beta)\},
\end{equation}
the latter join being orthogonal.
\end{corollary}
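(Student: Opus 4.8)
The plan is to derive \eqref{eq:branching1} by simply applying the canonical projection $\pi\colon S\to S/{\mathcal D}$ to the identity \eqref{eq:branching_d} of Theorem~\ref{th:branching}. The whole argument rests on the fact, recorded in Section~\ref{s:prelim}, that $\pi$ is a homomorphism of skew Boolean algebras onto $S/{\mathcal D}$; in particular it preserves the operations $\wedge$, $\vee$, $\setminus$ and the constant $0$. I would begin by recalling that the symbol $e(X,\alpha)$ on the left of \eqref{eq:branching1} is, by definition, the ${\mathcal D}$-class $[e(X,\alpha,A)]=\pi(e(X,\alpha,A))$, which is independent of the marked block $A$ by Corollary~\ref{cor:d}; the same remark applies to each $e(Y,\beta)$.

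Next I would apply $\pi$ to both sides of \eqref{eq:branching_d}. The left-hand side becomes $\pi(e(X,\alpha,A))=e(X,\alpha)$. Since $\pi$ commutes with finite joins, the right-hand side becomes $\vee\{\pi(e(Y,\beta,A\!\uparrow_{\alpha}^{\beta}))\colon (X,\alpha)\preceq (Y,\beta)\}$, and, again invoking that the ${\mathcal D}$-class of an elementary element does not depend on its marked block, every summand $\pi(e(Y,\beta,A\!\uparrow_{\alpha}^{\beta}))$ equals $e(Y,\beta)$. The index set $\{\beta\colon (X,\alpha)\preceq (Y,\beta)\}$ is purely combinatorial and thus unaffected by $\pi$, so we arrive at \eqref{eq:branching1}.

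For orthogonality, I would use that the family in \eqref{eq:branching_d} is already orthogonal in $S$ by Theorem~\ref{th:branching} (equivalently by part~\eqref{i:1} of Proposition~\ref{prop:crucial}): whenever $\beta\neq\beta'$ we have $e(Y,\beta,A\!\uparrow_{\alpha}^{\beta})\wedge e(Y,\beta',A\!\uparrow_{\alpha}^{\beta'})=0$. Applying $\pi$, which preserves $\wedge$ and $0$, gives $e(Y,\beta)\wedge e(Y,\beta')=0$ in $S/{\mathcal D}$, so the join on the right of \eqref{eq:branching1} is orthogonal. I do not expect a genuine obstacle here: the content is entirely the functoriality of $\pi$ together with the well-definedness of $e(X,\alpha)$. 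The only point deserving a moment's care is to confirm that passing to the quotient neither merges nor loses summands in a way that alters the statement; this is settled by the observation that the index set is fixed under $\pi$ and that orthogonality forces distinct nonzero classes to remain distinct, since an idempotent element equal to its own orthogonal partner would have to be $0$.
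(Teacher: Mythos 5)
Your proof is correct and is exactly the paper's argument: the paper derives Corollary~\ref{cor:21} in one line from Theorem~\ref{th:branching} by noting that ${\mathcal D}$ is a congruence on the SBA reduct of $S$, which is precisely your observation that $\pi$ preserves $\wedge$, $\vee$ and $0$ (note that only these operations occur in \eqref{eq:branching1}, so it is harmless that ${\mathcal D}$ fails to respect $\sqcap$). Your closing worry about merged or lost summands is unnecessary, since \eqref{eq:branching1} asserts only an equality of joins over a fixed combinatorial index set, but it does no harm.
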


\subsection{Normal forms}  \label{subs:normal_forms}   Let ${\mathcal E}$ be a family of elementary elements over $X$. It is {\em admissible} if all elements in ${\mathcal E}$ are non-zero and also $e(X,\alpha,A), e(X,\alpha, B)\in {\mathcal E}$ implies that $A=B$, that is, ${\mathcal E}$ contains at most one elementary element with support $(X,\alpha)$ for each partition $(X,\alpha)$.
 By part \eqref{i:1} of Proposition \ref{prop:crucial} ${\mathcal E}$ is admissible of and only if all its elements are non-zero and it is orthogonal. A {\em normal form} {\em over} $X$ is an element which can be written as
$\vee {\mathcal E}$,
where  ${\mathcal E}$ is an admissible family of elementary elements over $X$. 
The elements of ${\mathcal E}$ are the {\em clauses} of the normal form. We take a convention that $0$ is a normal form, corresponding to the empty family ${\mathcal E}$.

\begin{theorem}[Normal Forms]\label{th:normal_forms}
Let ${\mathcal E}$ and ${\mathcal F}$ be two admissible families of elementary elements over $X$ and let $e= \vee {\mathcal E}$ and $f = \vee {\mathcal F}$ be the respective normal forms over $X$. Then 
\begin{enumerate}
\item \label{ii1} Any subfamily of ${\mathcal E}$ is admissible as is the family $\{e\in  {\mathcal{E}}\colon \pi(e)\not\in \pi({\mathcal{F}})\}\cup {\mathcal F}$. Moreover, the elements $e\vee f$, $e\wedge f$, $e\sqcap f$ and $e\setminus f$ can be written as normal forms over $X$, namely:
$$
\begin{array}{lcl} e\vee f & = &\vee \{e\in  {\mathcal{E}}\colon \pi(e)\not\in \pi({\mathcal{F}})\} \vee f;\\
e\wedge f &= &\vee\{e\in {\mathcal{E}}\colon \pi(e)\in \pi({\mathcal{F}})\};\\
e\setminus f & = & \vee \{e\in {\mathcal{E}}\colon \pi(e)\not \in\pi({\mathcal{F}})\};\\
e\sqcap f & = & \vee ({\mathcal{E}}\cap {\mathcal{F}}).
\end{array}
$$
\item \label{ii2} Any element of the subalgebra $\langle X\rangle$, generated by $X$, can be written as a normal form over $X$.
\item \label{ii3a} $e\leq f$ if and only if  ${\mathcal{E}}\subseteq {\mathcal{F}}$.
\item \label{ii3b} $e=f$ if and only if ${\mathcal{E}}={\mathcal{F}}$. 
\item \label{ii3} The algebra $\langle X\rangle$ is finite.
\item \label{ii3c}  For any $x\in \langle X\rangle$ the atoms below $x$ are precisely the elements of ${\mathcal{E}}$ where $x= \vee {\mathcal E}$ is the normal form of $x$.
\item \label{ii4} The elementary elements over $X$, which are not equal to zero, are precisely the atoms of $\langle X\rangle$.\end{enumerate}
\end{theorem}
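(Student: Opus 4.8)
The plan is to reduce every claim to the multiplication table for elementary elements in Proposition~\ref{prop:crucial}, combined with the distributive laws and the left-handed structure of $S$. Set $\mathcal E'=\{g\in\mathcal E\colon\pi(g)\notin\pi(\mathcal F)\}$ and $\mathcal E''=\mathcal E\setminus\mathcal E'$, and recall from Corollary~\ref{cor:d} that two non-zero elementary elements are $\mathcal D$-related exactly when they share a support and are equal exactly when they share support and marked block. I would begin with part~\eqref{ii1}, which carries the technical weight. Admissibility of a subfamily of $\mathcal E$ is immediate, and $\mathcal E'\cup\mathcal F$ is admissible because it consists of non-zero elements with at most one clause per support (the $\mathcal E'$-clauses avoid every support occurring in $\mathcal F$, and $\mathcal F$ is already admissible), hence is orthogonal by part~\eqref{i:1} of Proposition~\ref{prop:crucial}. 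For the identities I would expand each operation across the orthogonal joins: for $\wedge$ by the distributive axiom on both sides, for $\sqcap$ by part~\eqref{sbia5} of Lemma~\ref{lem:basic_sbia} and commutativity of $\sqcap$, and for $\setminus$ by peeling $\mathcal F$ off one clause at a time through part~\eqref{sba4} of Lemma~\ref{lem:basic_sba} while distributing over $\mathcal E$ via the identity $(x\vee y)\setminus z=(x\setminus z)\vee(y\setminus z)$, valid in every left-handed SBA since it holds in ${\bf 3}_L$. In each case a product $g\circ h$ with $g\in\mathcal E$ and $h\in\mathcal F$ collapses to $g$ or to $0$ by Proposition~\ref{prop:crucial}, and admissibility of $\mathcal F$ lets at most one $h$ contribute; this yields the stated formulas for $e\wedge f$, $e\sqcap f$ and $e\setminus f$.

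The delicate case is $e\vee f$, which I expect to be the main obstacle: since $\vee$ is non-commutative, one must show that the clauses of $\mathcal E''$ are absorbed by $\mathcal F$. The key lemma is that in a left-handed SBA, $[a]\le[w]$ forces $a\vee w=w$. Indeed $[a\vee w]=[a]\vee[w]=[w]$, so $a\vee w\mathrel{\mathcal D}w$; left-flatness of the $\mathcal D$-classes then yields $(a\vee w)\wedge w=a\vee w$, while distributivity and absorption give $(a\vee w)\wedge w=(a\wedge w)\vee w=w$, whence $a\vee w=w$. Writing $W=\vee(\mathcal E'\cup\mathcal F)=(\vee\mathcal E')\vee f$, each $g\in\mathcal E''$ satisfies $[g]=[h_g]\le[W]$ for its unique same-support partner $h_g\in\mathcal F$ (as $h_g\le W$), so $g\vee W=W$; peeling the clauses of $\mathcal E''$ off the front of the orthogonal join $\vee\mathcal E$ (permissible, an orthogonal join being independent of the order of its terms) gives $e\vee f=(\vee\mathcal E'')\vee W=W=(\vee\mathcal E')\vee f$, the asserted formula.

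The remaining parts are then formal. For the uniqueness part~\eqref{ii3b}, if $\vee\mathcal E=\vee\mathcal F$ then each $g\in\mathcal E$ satisfies $g=g\sqcap(\vee\mathcal E)=g\sqcap(\vee\mathcal F)$ by distributivity of $\sqcap$, and by part~\eqref{i:2} of Proposition~\ref{prop:crucial} the right side equals $g$ if $g\in\mathcal F$ and $0$ otherwise; since $g\neq0$ we get $\mathcal E\subseteq\mathcal F$, and symmetrically equality. Part~\eqref{ii3a} follows because $e\le f$ is equivalent to $e\sqcap f=e$ ($\sqcap$ being the meet), i.e.\ $\vee(\mathcal E\cap\mathcal F)=\vee\mathcal E$, which by uniqueness means $\mathcal E\subseteq\mathcal F$. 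For part~\eqref{ii2}, observe that $0$ and, by Corollary~\ref{cor:x} with $Y=X$ (after discarding zero clauses), each generator $x\in X$ are normal forms; by part~\eqref{ii1} the normal forms are closed under all operations and are clearly contained in $\langle X\rangle$, so they exhaust it. Part~\eqref{ii3} is then immediate, since there are only finitely many admissible families over the finite set $X$.

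Finally, for part~\eqref{ii4}, a non-zero $e(X,\alpha,A)$ has normal form the singleton $\{e(X,\alpha,A)\}$, so part~\eqref{ii3a} shows the only elements below it are $0$ and itself, making it an atom; conversely, if $a$ is an atom with normal form $\vee\mathcal E$, any clause $g\in\mathcal E$ satisfies $0\neq g\le a$, forcing $g=a$, so $a$ is a non-zero elementary element. Part~\eqref{ii3c} combines these: every clause of the normal form $\vee\mathcal E$ of $x$ lies below $x$ (as $g\sqcap x=g$) and is an atom by~\eqref{ii4}, while any atom $a\le x$ is a non-zero elementary element with $a=a\sqcap x=\vee\{a\sqcap g\colon g\in\mathcal E\}$, which by part~\eqref{i:2} of Proposition~\ref{prop:crucial} forces $a\in\mathcal E$.
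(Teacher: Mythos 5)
Your proof is correct and follows essentially the same route as the paper: the paper's one-sentence argument for part~\eqref{ii1} reduces everything to Proposition~\ref{prop:crucial} together with distributivity, commutation of orthogonal clauses and idempotency/absorption, which is exactly the computation you carry out in detail, and the remaining parts are derived from part~\eqref{ii1}, Corollary~\ref{cor:x} and the finiteness of the set of elementary elements, just as you do. Your explicit lemma that $[a]\leq [w]$ forces $a\vee w=w$ in a left-handed SBA is a clean way of supplying the absorption step needed for the $e\vee f$ formula, a step the paper leaves implicit in its appeal to ``easily established.''
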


\begin{proof} \eqref{ii1} This is easily established applying the fact that clauses of a normal form commute under $\vee$, distributivity of $\wedge$ and $\sqcap$ over $\vee$ (the former distributivity is a part of the definition of an SBA, and the latter one is part \eqref{sbia5} of Lemma \ref{lem:basic_sbia}), idempotency of the operations $\wedge$, $\sqcap$ and $\vee$, and Proposition \ref{prop:crucial}.

\eqref{ii2} According to Corollary \ref{cor:x}, any $x\in X$ can be written as a normal form. This and part \eqref{ii1} imply the needed claim.

\eqref{ii3a} and \eqref{ii3b}  follow from \eqref{ii1}.

\eqref{ii3} Since $X$ is finite, there are only finitely many elementary elements over $X$ and thus finitely many admissible families of elementary elements.

\eqref{ii3c} follows from  \eqref{ii3a}, and \eqref{ii4}  follows from \eqref{ii2} and~\eqref{ii3a}.

\end{proof}

\subsection{The commutative case} Here we show that in the case where $S$ is commutative, that is, satisfies the identity $x\sqcap y=x\wedge y$,  our theory reduces to the classical theory of elementary conjunctions and disjunctive normal forms in GBAs. 

\begin{lemma} Assume that $S$ satisfies the identity $x\sqcap y=x\wedge y$. Then $e(X,\alpha, A)=0$ for every partition $(X,\alpha)$ with ${\mathrm{rank}}(\alpha)\geq 2$.
\end{lemma}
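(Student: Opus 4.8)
The plan is to use the hypothesis $x \sqcap y = x \wedge y$ to collapse the defining expression for $e(X,\alpha,A)$ into an expression that is manifestly zero whenever $\alpha$ has at least two blocks. Recall from \eqref{eq:pq} that $e(X,\alpha,A) = p \setminus (\vee Q)$, where $p = (\sqcap A)\wedge(\sqcap A_1)\wedge\dots\wedge(\sqcap A_k)$ and $Q$ contains, among its elements, all the pairwise ``fused'' intersections $\sqcap(A_i\cup A_j)$ for $1\leq i<j\leq k$. Since ${\mathrm{rank}}(\alpha)=k\geq 2$, there is at least one such fused term, say $\sqcap(A_1\cup A_2)$, sitting inside $Q$. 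The strategy is to show that $[p]\leq [\sqcap(A_1\cup A_2)]$ in $S/{\mathcal D}$ and then invoke Lemma~\ref{lem:useful} to conclude $e(X,\alpha,A)=p\setminus(\vee Q)=0$.

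The key step is to rewrite $p$ using commutativity. First I would observe that under $x\sqcap y = x\wedge y$, each block-intersection $\sqcap A_i$ is just the $\wedge$-product $\wedge A_i$ of the elements of $A_i$, so by left normality $p$ is simply the $\wedge$-product (in some fixed order) of all elements of ${\mathrm{dom}}(\alpha) = A_1\cup\dots\cup A_k$, with the elements of the marked block $A$ possibly repeated; by idempotency of $\wedge$ repetitions may be discarded, so $[p] = [\wedge(A_1\cup\dots\cup A_k)]$. On the other hand $\sqcap(A_1\cup A_2) = \wedge(A_1\cup A_2)$, again by the hypothesis together with Lemma~\ref{lem:compat} or directly from commutativity, and its ${\mathcal D}$-class is $[\wedge(A_1\cup A_2)]$. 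Since $A_1\cup A_2 \subseteq A_1\cup\dots\cup A_k$, the meet over the larger set lies below the meet over the smaller set in the underlying order, giving $[p]\leq[\sqcap(A_1\cup A_2)]$.

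The main obstacle, though a mild one, is bookkeeping the noncommutative $\wedge$ carefully: one must be sure that passing from the ordered $\wedge$-product $p$ to its ${\mathcal D}$-class is legitimate, and that the inclusion of underlying sets really does yield the order relation $[p]\leq[\sqcap(A_1\cup A_2)]$ at the level of $S/{\mathcal D}$. This is clean because $S/{\mathcal D}$ is the maximal GBA quotient, where $\wedge$ is the genuine commutative meet; the projection $\pi$ sends $p$ to the meet of $\pi(a)$ over all $a\in{\mathrm{dom}}(\alpha)$ and sends $\sqcap(A_1\cup A_2)$ to the meet over $A_1\cup A_2$, and meets of larger sets are smaller. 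Once $[p]\leq[\sqcap(A_1\cup A_2)]$ is in hand, since $\sqcap(A_1\cup A_2)\in Q$ forces $[\vee Q]\geq[\sqcap(A_1\cup A_2)]\geq[p]$, Lemma~\ref{lem:useful} applied to $a=p$ and $b=\vee Q$ yields $p\setminus(\vee Q)=0$, i.e. $e(X,\alpha,A)=0$, completing the argument.
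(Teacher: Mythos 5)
Your proof is correct and follows essentially the same route as the paper: both arguments pass to $S/{\mathcal D}$, identify $[p]$ with the meet of all of $\mathrm{dom}(\alpha)$ (using $\sqcap=\wedge$), observe that a fused term $\sqcap(A_i\cup A_j)\in Q$ then satisfies $[\sqcap(A_i\cup A_j)]\geq[p]$, and conclude $p\setminus(\vee Q)=0$ via Lemma~\ref{lem:useful}. Your write-up merely spells out the bookkeeping (idempotency, left normality, the projection $\pi$) that the paper leaves implicit.
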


\begin{proof} Suppose ${\mathrm{rank}}(\alpha)\geq 2$ and let $e(X,\alpha, A)=p\setminus (\vee Q)$, as in \eqref{eq:pq}.
Then $[p]=\wedge [\{a\colon a\in {\mathrm{dom}}({\alpha})\}]$ and for every two distinct blocks $A_i$ and $A_j$ of $\alpha$ we have that $[\wedge (A_i \cup  A_j)]\geq [p]$. In follows that $[\vee Q]\geq [p]$ which yields $p\setminus (\vee Q)=0$.
\end{proof}

Let $(X,\alpha, A)$  a pointed partition where ${\mathrm{rank}}(\alpha)=1$. Then $A$ is the only block of $\alpha$ and thus the set of all such pointed partitions is in a bijection with the set of pairs $(X,A)$ where $A$ is a non-empty subset of $X$. Applying this bijection, we can write $e(X,A)$ for $e(X,\alpha, A)$ (of course, some of the $e(X,A)$ may be equal $0$, if $S$ satisfies some additional identities). The element $e(X,A)$ equals 
$$\sqcap \,\,{\mathrm{dom}}(\alpha)\setminus \vee \,\, (X\setminus {\mathrm{dom}}(\alpha)).$$
Thus non-zero elementary elements are reduced to elementary conjunctions in GBAs. It now follows that the Decomposition Rule in this case reduces to the usual atom decomposition rule in GBAs. For example, if $x\not\in X$ we have
$e(X\cup\{x\}, A)=e(X,A)\vee e(X,A\cup\{x\})$. The normal forms of Subsection  \ref{subs:normal_forms} then reduce to usual full disjunctive normal forms for GBAs. We leave the details to the reader.

\section{Free algebras}\label{s:free}

Let $_{\mathcal L}{\bf SBIA}_{X}$ be the {\em free left-handed SBIA over the generator set } $X$, that is, the algebra of terms in $X$ where two terms are equal if and only if one can be obtained from the another one by a finite number of applications of identities defining the variety of left-handed SBIAs \cite[II.10]{BS}. Let, further, ${\bf GBA}_{X}$ denote the free GBA over $X$.

\subsection{Finite free algebras} Let $X_n=\{x_1,x_2,\dots, x_n\}$.  We denote $_{\mathcal L}{\bf SBIA}_{X_n}$ by $_{\mathcal L}{\bf SBIA}_{n}$ and ${\bf GBA}_{X_n}$ by ${\bf GBA}_n$.

\begin{proposition}\label{prop:free} Let $S$ be a left-handed SBIA and assume that $X_n\subseteq S$. The following statements are equivalent:
\begin{enumerate} 
\item \label{free1}
The subalgebra $\langle X_n\rangle$ is free over $X_n$.
\item \label{free2}All elementary elements over $X_n$ are non-zero.
\item \label{free3} All elementary elements over $X_n$ are non-zero and pairwise distinct.
\end{enumerate}
\end{proposition}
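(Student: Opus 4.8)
The three conditions will be shown equivalent by establishing $(2)\Leftrightarrow(3)$ and $(1)\Leftrightarrow(2)$, the only real content lying in $(1)\Rightarrow(2)$. The equivalence of $(2)$ and $(3)$ is immediate: $(3)\Rightarrow(2)$ is a weakening, and for $(2)\Rightarrow(3)$ one only invokes part~\eqref{dec1} of Corollary~\ref{cor:d}, which tells us that two \emph{nonzero} elementary elements coincide precisely when their pointed partitions coincide; hence once every elementary element over $X_n$ is nonzero, distinct pointed partitions necessarily give distinct elements, i.e. the elements are pairwise distinct.

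For $(2)\Rightarrow(1)$ I would argue that the canonical surjective homomorphism $\phi\colon {}_{\mathcal L}{\bf SBIA}_n\tto\langle X_n\rangle$ with $\phi(x_i)=x_i$ (which always exists and is onto) is in fact injective, since freeness of $\langle X_n\rangle$ is exactly the statement that $\phi$ is an isomorphism. Because $\phi$ fixes the generators and each $e(X_n,\alpha,A)$ is a term in them, $\phi$ carries the elementary element $e(X_n,\alpha,A)$ of the free algebra to the elementary element with the same label in $S$. Assumption $(2)$ guarantees the latter is nonzero, and by part~\eqref{dec1} of Corollary~\ref{cor:d} (applied in both algebras) $\phi$ restricts to a bijection between the nonzero elementary elements of the two algebras. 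Now take $u=\vee\mathcal{E}$ and $v=\vee\mathcal{F}$ in normal form in the free algebra (Theorem~\ref{th:normal_forms}); applying $\phi$ and using part~\eqref{sbia5} of Lemma~\ref{lem:basic_sbia} together with the fact that $\phi(\mathcal{E})$ and $\phi(\mathcal{F})$ remain admissible families in $S$ (their members are nonzero, and orthogonality depends only on supports, by part~\eqref{i:1} of Proposition~\ref{prop:crucial}), the images $\phi(u)=\vee\phi(\mathcal{E})$ and $\phi(v)=\vee\phi(\mathcal{F})$ are themselves normal forms in $S$. Hence $\phi(u)=\phi(v)$ forces $\phi(\mathcal{E})=\phi(\mathcal{F})$ by part~\eqref{ii3b} of Theorem~\ref{th:normal_forms}, and the bijection above yields $\mathcal{E}=\mathcal{F}$, i.e. $u=v$. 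Thus $\phi$ is injective.

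The substantive direction is $(1)\Rightarrow(2)$, and here the plan is to separate each pointed partition by a homomorphism into a primitive algebra. Fix a pointed partition $(X_n,\alpha,A)$ with blocks $A_1,\dots,A_k$ and marked block $A=A_{i_0}$. Define a map $X_n\to{\bf (n+1)}_L$ by a \emph{block colouring}: send every generator lying in $X_n\setminus{\mathrm{dom}}(\alpha)$ to $0$, and send every generator in a block $A_i$ to a colour $v_i\in\{1,\dots,n\}$, chosen distinct for distinct blocks (possible since ${\mathrm{rank}}(\alpha)\le n$). Since $\langle X_n\rangle$ is free by $(1)$, this map extends to a homomorphism $h$. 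The heart of the argument is then the evaluation of the form $e(X_n,\alpha,A)=p\setminus(\vee Q)$ from \eqref{eq:pq} under $h$: in ${\bf (n+1)}_L$ each $\sqcap A_i$ evaluates to $v_i$, so $h(p)=v_{i_0}\neq 0$ by left-handedness of $\wedge$, while every member of $Q$ evaluates to $0$ (the generators outside ${\mathrm{dom}}(\alpha)$ vanish, and each $\sqcap(A_i\cup A_j)$ vanishes because $v_i\neq v_j$); consequently $h(e(X_n,\alpha,A))=v_{i_0}\setminus 0=v_{i_0}\neq 0$. As a homomorphism sends $0$ to $0$, this forces $e(X_n,\alpha,A)\neq 0$ in $\langle X_n\rangle$, and since $(X_n,\alpha,A)$ was arbitrary we obtain $(2)$.

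The main obstacle is precisely this last evaluation: one must verify in the primitive algebra ${\bf (n+1)}_L$ that the chosen colouring makes $p$ survive (so the marked block contributes a nonzero value) while simultaneously annihilating every element of $Q$ (so the relative complement does not collapse). Everything else is bookkeeping with the normal-form calculus of Theorem~\ref{th:normal_forms} and the behaviour of elementary elements recorded in Proposition~\ref{prop:crucial} and Corollary~\ref{cor:d}.
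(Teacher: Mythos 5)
Your proposal is correct and follows essentially the same route as the paper: your (1)$\Rightarrow$(2) is the paper's own separating evaluation into a primitive algebra (the paper colours block $A_i$ by $i$ in ${\bf (k+1)}_L$ while you choose distinct colours in ${\bf (n+1)}_L$, an immaterial difference), and your (2)$\Rightarrow$(3) is the same appeal to Corollary~\ref{cor:d}. Your (2)$\Rightarrow$(1), proving injectivity of the canonical epimorphism from $_{\mathcal L}{\bf SBIA}_n$ onto $\langle X_n\rangle$, is only a repackaging of the paper's (3)$\Rightarrow$(1): both arguments expand the generators via Corollary~\ref{cor:x} and compare the resulting normal forms clause by clause using Proposition~\ref{prop:crucial} and Theorem~\ref{th:normal_forms}.
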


\begin{proof} \eqref{free1} $\Rightarrow$ \eqref{free2} We verify that the equality $e(X_n,\alpha,A)=0$, where $e(X_n,\alpha,A)$ is an arbitrary elementary element, does not follow from the identities defining the variety of left-handed SBIAs. For this, we provide an example of a left-handed SBIA, in which such an equality does not hold. So suppose $e(X_n,\alpha,A)$ is an elementary element. Let $Y={\mathrm{dom}}(\alpha)$ and  $\alpha=\{A_1,\dots, A_k\}$. We show that the equality $e(X_n,\alpha,A)=0$ does not hold in ${\bf{(k+1)}}_L$. For every $i=1,\dots, k$ and every $x\in A_i$ we take the evaluation of $x$ in ${\bf{(k+1)}}_L$ to be equal~$i$. For every $x\in X_n\setminus Y$ we take the evaluation of $x$ to be equal $0$. Then
the evaluation of $e(X_n,\alpha,A)$ equals $m$ where $A=A_m$, so that $e(X_n,\alpha,A)\neq 0$. (Remark that  any evaluation of $e(X_n,\alpha,A)$ in ${\bf m}_L$ with $m\leq k+1$ equals zero. Indeed, in order that the evaluation be non-zero, the elements inside each block must have the same value, and elements from different blocks must have different values.)

 \eqref{free2} $\Rightarrow$ \eqref{free3} follows from part \eqref{iii4} of Corollary \ref{cor:12}. 
 
  \eqref{free3} $\Rightarrow$ \eqref{free1}   Let $x=y$ be an identity holding in $\langle X_n\rangle$. We write every $z\in X_n$ as an orthogonal join of elementary elements, as in Corollary \ref{cor:x}. Since the clauses of such a join commute under $\vee$, applying distributivity of $\wedge$ and $\sqcap$ over $\vee$, idempotency of the operations $\wedge$, $\sqcap$ and $\vee$, and Proposition \ref{prop:crucial}, we rewrite $x$ and $y$ as orthogonal joins of elementary elements. By assumption, all elementary elements are distinct and non-zero.  It follows that any clause appearing in the expression of $x$ appears in the expression of $y$ and wise versa. Thus the equality $x=y$ follows  from the identities defining the variety of left-handed SBIAs.  
  \end{proof}

\begin{example} {\em  We illustrate the proof of the implication \eqref{free1} $\Rightarrow$ \eqref{free2} above. Let $n=5$. Evaluating $x=e(X_5, x_1|x_2x_3|{x_4}, \{x_4\})$ in ${\bf 4}_L$ with
$x_1=1$, $x_2=x_3=2$, $x_4=3$, $x_5=0$ gives us
\begin{multline*}
x= (x_4 \wedge x_1\wedge (x_2\sqcap x_3))\setminus (x_5\vee (x_1\sqcap x_2\sqcap x_3) \vee (x_1\sqcap x_4)\vee (x_2\sqcap x_3\sqcap x_4))=\\
(3\wedge 1\wedge 2)\setminus (0\vee 0\vee 0\vee 0) = 3.
\end{multline*}
At the same time, any evaluation of $x$ in ${\bf 3}_L$ or in ${\bf 2}$ equals $0$.}
\end{example}

\begin{corollary}\label{cor:26} \begin{enumerate}
\item There is a bijective correspondence between atoms of  $_{\mathcal L}{\bf SBIA}_{n}$ and pointed partitions of non-empty subsets of $X_n$. 
\item There is a bijective correspondence between atoms of $_{\mathcal L}{\bf SBIA}_{n}/{\mathcal D}$ and partitions of non-empty subsets of $X_n$.
\end{enumerate}
\end{corollary}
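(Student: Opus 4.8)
The plan is to derive Corollary \ref{cor:26} directly from Proposition \ref{prop:free} together with the normal-form theory assembled in Theorem \ref{th:normal_forms}. The crucial observation is that $_{\mathcal L}{\bf SBIA}_{n}$ is, by definition, the free left-handed SBIA over $X_n$, so taking $S = {}_{\mathcal L}{\bf SBIA}_{n}$ and $\langle X_n\rangle = S$, the equivalent conditions \eqref{free2} and \eqref{free3} of Proposition \ref{prop:free} apply: all elementary elements over $X_n$ are non-zero and pairwise distinct. This is exactly what is needed to turn the abstract bookkeeping of partitions into a genuine bijection with atoms.

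For part (1), I would argue as follows. By part \eqref{ii4} of Theorem \ref{th:normal_forms}, the atoms of $\langle X_n\rangle = {}_{\mathcal L}{\bf SBIA}_{n}$ are precisely the non-zero elementary elements over $X_n$. Since the algebra is free, every elementary element $e(X_n,\alpha,A)$ is non-zero by Proposition \ref{prop:free}\eqref{free2}, so the atoms are exactly the elements $e(X_n,\alpha,A)$ as $(X_n,\alpha,A)$ ranges over pointed partitions of non-empty subsets of $X_n$. It then remains to check that the assignment $(X_n,\alpha,A)\mapsto e(X_n,\alpha,A)$ is injective. This is supplied by Proposition \ref{prop:free}\eqref{free3}, or equivalently by part \eqref{dec1} of Corollary \ref{cor:d}: two non-zero elementary elements coincide if and only if their supports and marked blocks coincide, i.e.\ $\alpha=\beta$ and $A=B$. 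Surjectivity onto the set of atoms is immediate from Theorem \ref{th:normal_forms}\eqref{ii4}. Hence the map is a bijection.

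For part (2), I would pass to the maximal lattice quotient $S/{\mathcal D}$. By Corollary \ref{cor:d}\eqref{dec2}, two non-zero elementary elements are ${\mathcal D}$-related if and only if their supporting partitions agree, so the ${\mathcal D}$-class of $e(X_n,\alpha,A)$ depends only on $(X_n,\alpha)$ and is the object denoted $e(X_n,\alpha)$ in the text. An atom of $S/{\mathcal D}$ is an atomic ${\mathcal D}$-class, i.e.\ the ${\mathcal D}$-class of an atom of $S$; by part (1) these are exactly the classes $e(X_n,\alpha)$. The remaining point is injectivity of $(X_n,\alpha)\mapsto e(X_n,\alpha)$, which is again Corollary \ref{cor:d}\eqref{dec2}: distinct partitions give ${\mathcal D}$-inequivalent, hence distinct, classes. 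This yields the claimed bijection between atoms of $_{\mathcal L}{\bf SBIA}_{n}/{\mathcal D}$ and partitions of non-empty subsets of $X_n$.

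I do not expect a genuine obstacle here, since the statement is essentially a repackaging of results already proved; the whole content has been front-loaded into Proposition \ref{prop:free} and Corollary \ref{cor:d}. The only point requiring a little care is to be explicit that freeness is what guarantees \emph{all} elementary elements are non-zero (so that no pointed partition is lost), as opposed to the general-algebra situation of Section \ref{s:normal_forms} where some $e(X,\alpha,A)$ may vanish and the correspondence degenerates. With that invoked, both bijections follow formally.
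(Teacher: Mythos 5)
Your proof is correct and follows essentially the same route as the paper, whose own proof is just the citation ``(1) follows from Proposition \ref{prop:free}, and (2) from the same proposition and Corollary \ref{cor:d}''; you have merely made explicit the implicit ingredients (Theorem \ref{th:normal_forms}\eqref{ii4} identifying atoms with non-zero elementary elements, and the standard fact that atoms of $S/{\mathcal D}$ are exactly the ${\mathcal D}$-classes of atoms of $S$). No gaps.
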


\begin{proof} (1) follows from Proposition \ref{prop:free}, and (2) from the same proposition and Corollary~\ref{cor:d}.
\end{proof}

Since, for ${\bf GBA}_n$, there is a bijective correspondence between its atoms and non-empty subsets of $X_n$, part (2) of Corollary \ref{cor:26}  shows that it is reasonable to call $_{\mathcal L}{\bf SBIA}_{n}/{\mathcal D}$ the  {\em partition analogue} of ${\bf GBA}_n$. It follows that $_{\mathcal L}{\bf SBIA}_{n}$ may be viewed as an `upgrade' of the partition analogue of ${\bf GBA}_n$.

\begin{remark}{\em Proposition \ref{prop:free} tells us that the assignment $(X,\alpha, A)\mapsto e(X,\alpha, A)$ is a bijection if and only the algebra $\langle X \rangle$ is freely generated by $X$. Let $X\subseteq Y \subseteq X_n$ and assume that the algebra $\langle X_n \rangle$ is free.  Then the algebras $\langle X\rangle$ and $\langle Y\rangle$ are free, too. Fix a pointed partition $(X,\alpha,A)$. Then all the elementary elements appearing in equation \eqref{eq:branching_d} of the Decomposition Rule are non-zero and pairwise distinct, so that \eqref{eq:branching_d} models the the passage from $(X,\alpha, A)$ to all pointed partitions $(Y,\beta, A\!\uparrow_{\alpha}^{\beta})$ where $(X,\alpha)\preceq (Y,\beta)$. Similarly, \eqref{eq:branching1} of Corollary \ref{cor:21} models the passage from $(X,\alpha)$ to all $(Y,\beta)$ satisfying $(X,\alpha)\preceq (Y,\beta)$.}
\end{remark}

We are now able to determine the structure and calculate various combinatorial characteristics of $_{\mathcal L}{\bf SBIA}_n$. But first recall that for $n\geq 1$ the $n$th {\em Bell number}, denoted $B_n$,  equals the number of partitions of an $n$-element set. Further,  for $n\geq 1$ and $1\leq k\leq n$,  the {\em Stirling number of the second kind}, $\stirling{n}{k}$, equals the number of partitions of an $n$-element set into $k$ non-empty subsets.

\begin{theorem} \label{th:combinatorial}\mbox{}
\begin{enumerate}
\item \label{c1} $_{\mathcal L}{\mathbf{SBIA}}_n$ has precisely $B_{n+1}-1$ atomic ${\mathcal D}$-classes.
\item \label{c2} $$_{\mathcal L}{\mathbf{SBIA}}_n \simeq {\bf 2}^{\stirling{n+1}{2}} \times {\bf 3}_L^{\stirling{n+1}{3}} \times \dots \times {\bf{(n+1)}}_L^{\stirling{n+1}{n+1}},
$$
Consequently,
$$
|_{\mathcal L}{\mathbf{SBIA}}_n| = 2^{\stirling{n+1}{2}}3^{\stirling{n+1}{3}} \cdots  (n+1)^{\stirling{n+1}{n+1}}.
$$

\item \label{c3}
$_{\mathcal L}{\mathbf{SBIA}}_n$ has  precisely $B_{n+2}-2B_{n+1}$ atoms.

\item \label{c4} There are precisely $2^{\stirling{n+1}{2}}=2^{2^n-1}$ singleton ${\mathcal D}$-classes, so that the center of $_{\mathcal L}{\mathbf{SBIA}}_n$ is isomorphic to ${\bf 2}^{2^n-1}$ which is isomorphic to ${\mathbf{GBA}}_n$.
\end{enumerate}
\end{theorem}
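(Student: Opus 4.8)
The plan is to reduce all four assertions to counting set partitions and to feed the resulting counts into the structural decomposition already at our disposal. By Proposition~\ref{prop:free} the algebra ${}_{\mathcal L}\mathbf{SBIA}_n=\langle X_n\rangle$ is free, so every elementary element over $X_n$ is non-zero; hence by Corollary~\ref{cor:26} the atomic ${\mathcal D}$-classes correspond bijectively to partitions $\alpha$ of non-empty subsets of $X_n$, and the atoms to the pointed such partitions. Proposition~\ref{prop:structure} writes ${}_{\mathcal L}\mathbf{SBIA}_n$ as the product $\prod_i D_i^0$ over its atomic ${\mathcal D}$-classes, and part~\eqref{iii3} of Corollary~\ref{cor:12} tells us that the class attached to $\alpha$ has cardinality $\mathrm{rank}(\alpha)$; thus a rank-$k$ partition contributes precisely one factor ${\bf (k+1)}_L$. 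Everything then reduces to tabulating partitions of non-empty subsets of an $n$-set by their rank.

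For part~\eqref{c1} I would count partitions over all subsets $Y\subseteq X_n$: a subset of size $m$ carries $B_m$ partitions, so the total is $\sum_{m=0}^n\binom{n}{m}B_m$, which equals $B_{n+1}$ by the standard Bell recurrence. Subtracting the lone empty partition of $\varnothing$ leaves $B_{n+1}-1$ partitions of non-empty subsets, i.e.\ $B_{n+1}-1$ atomic ${\mathcal D}$-classes.

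For part~\eqref{c2} I refine the same count by rank. The number of rank-$k$ partitions of non-empty subsets of $X_n$ is $\sum_{m}\binom{n}{m}\stirling{m}{k}$, and the convolution identity $\stirling{n+1}{k+1}=\sum_{m}\binom{n}{m}\stirling{m}{k}$ (seen by isolating the block of an adjoined $(n+1)$-st point) turns this into $\stirling{n+1}{k+1}$. Substituting these multiplicities into $\prod_i D_i^0$ gives the claimed isomorphism, whence the cardinality follows from $|{\bf (k+1)}_L|=k+1$. For part~\eqref{c3} the number of atoms is the number of pointed partitions, i.e.\ $\sum_{k=1}^n k\,\stirling{n+1}{k+1}$. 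Re-indexing by $j=k+1$ and using $\sum_j\stirling{m}{j}=B_m$ together with $\sum_j j\,\stirling{m}{j}=B_{m+1}-B_m$ (the count of pointed partitions of an $m$-set) collapses this to $(B_{n+2}-B_{n+1}-1)-(B_{n+1}-1)=B_{n+2}-2B_{n+1}$.

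Finally, for part~\eqref{c4} a ${\mathcal D}$-class is a singleton exactly when its rank is $1$, i.e.\ when $\alpha$ is a single-block partition of a non-empty subset of $X_n$; these are counted by $\stirling{n+1}{2}=2^n-1$, matching the $2^n-1$ non-empty subsets. These singleton classes are precisely the ${\bf 2}$-factors in the decomposition of part~\eqref{c2}, so part~(3) of Proposition~\ref{prop:10}, together with the fact that the center of $S$ is ${\bf 2}^{m_2}$, identifies the center of ${}_{\mathcal L}\mathbf{SBIA}_n$ with ${\bf 2}^{\stirling{n+1}{2}}={\bf 2}^{2^n-1}$; since ${\bf GBA}_n$ is finite with $2^n-1$ atoms it too is ${\bf 2}^{2^n-1}$, giving the stated isomorphism. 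I expect no serious conceptual obstacle, as the algebraic content is already packaged in the earlier results; the only real work is the combinatorial bookkeeping, chiefly getting the index shifts in the Bell--Stirling identities $\stirling{n+1}{k+1}=\sum_m\binom{n}{m}\stirling{m}{k}$ and $\sum_j j\,\stirling{m}{j}=B_{m+1}-B_m$ to produce the clean closed forms.
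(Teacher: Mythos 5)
Your proposal is correct and takes essentially the same approach as the paper: the identical structural reduction via Proposition~\ref{prop:free}, Corollary~\ref{cor:26}, part~\eqref{iii3} of Corollary~\ref{cor:12}, Proposition~\ref{prop:structure} and Proposition~\ref{prop:10}, followed by counting partitions of non-empty subsets of $X_n$ by rank. The only (immaterial) difference is in the combinatorial bookkeeping: the paper establishes the counts through an explicit bijection with partitions of an $(n+1)$-set of rank at least two (adjoining a point whose block absorbs the complement of the domain) and, for part~\eqref{c3}, through Bell-polynomial derivatives, whereas you invoke the equivalent standard identities $\sum_m\binom{n}{m}B_m=B_{n+1}$, $\sum_m\binom{n}{m}\stirling{m}{k}=\stirling{n+1}{k+1}$ and $\sum_j j\,\stirling{m}{j}=B_{m+1}-B_m$, all of which are correct and correctly applied.
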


\begin{proof}
(1)  As observed in Corollary \ref{cor:26}, the number of atomic ${\mathcal D}$-classes of $_{\mathcal L}{\mathbf{SBIA}}_n$ equals the number of partitions of non-empty subsets of $X_n$. The latter partitions are in a bijective correspondence with partitions of $X_n\cup \{a\}$, $a\not\in X_n$, of rank at least two:  to a partition $\alpha=\{A_1,\dots, A_k\}$ of $Y\subseteq X_n$, $Y\neq \varnothing$, we assign
the partition $\{A_1,\dots, A_k, (X_n\cup \{a\})\setminus Y\}$ of  $X_n\cup \{a\}$. This assignment is injective and any partition of $X_n\cup \{a\}$ of rank at least two arises this way. Clearly, there are $B_{n+1}-1$ such partitions.

(2)  We apply Proposition \ref{prop:structure}  to $_{\mathcal L}{\mathbf{SBIA}}_n$ and the fact that a finite primitive left-handed SBA $D^0$ is isomorphic to ${\bf{(k+1)}}_L$ where $k=|D|$. Thus, for each $k\geq 1$,  the factorization of $_{\mathcal L}{\mathbf{SBIA}}_n$ into a product of primitive algebras contains as many direct factors ${\bf{(k+1)}}_L$ as there are atomic ${\mathcal D}$-classes of cardinality $k$. By part \eqref{iii3} of Corollary \ref{cor:12}, the latter number equals the number of partitions of non-empty subsets of $X_n$ of rank $k$. Applying the same assignment as in the proof of part (1) above, we see that this equals the number of partitions of $X_n\cup \{a\}$, where $a\not\in X_n$, of rank $k+1$. Since latter number equals $\stirling{n+1}{k+1}$, \eqref{c2} is proved.

(3) Since ${\bf{(k+1)}}_L$ has precisely $k$ atoms, part (2) above implies that the number of atoms in  $_{\mathcal L}{\mathbf{SBIA}}_n$ equals
\begin{equation}\label{eq:atoms}
\sum_{k=2}^{n+1}(k-1)\stirling{n+1}{k}.
\end{equation}
We show that this number equals $B_{n+2}-2B_{n+1}$.
Recall \cite{W} that a {\em Bell Polynomial} $B_n(x)$ is defined by
$B_n(x)=\sum_{k=0}^n \stirling{n}{k}x^k$ for $x\in {\mathbb R}$. In particular, $B_n(1)=B_n$. We rewrite \eqref{eq:atoms}:
$$
\sum_{k=2}^{n+1}(k-1)\stirling{n+1}{k}=\sum_{k=2}^{n+1}k\stirling{n+1}{k} - \sum_{k=2}^{n+1}\stirling{n+1}{k}.
$$
The first of the two sums in the right-hand side above equals $B_{n+1}'(1)-1$ (where $B_{n+1}'(1)$ is the derivative of $B_{n+1}(x)$ evaluated at $x=1$), and the second sum equals $B_{n+1}-1$. It follows that the  sum in \eqref{eq:atoms} equals $B_{n+1}'(1) - B_{n+1}$. But from the recurrence relation for Bell polynomials \cite{W} we have
$B_{n+1}'(1) + B_{n+1}=B_{n+2}$. Thus the needed sum equals $B_{n+2}-2B_{n+1}$, as required.

(4) By part (2) of Corollary \ref{cor:12}, a ${\mathcal D}$-class $[(X_n,\alpha, A)]$ is singleton if and only $\alpha$ is of rank one (so that $A$ is the only block of $\alpha$). The number of such partitions equals the number of non-empty subsets of $X_n$, $2^n-1$. (and (2) above gives the known equality $2^n-1={\stirling{n+1}{2}}$.)  By Proposition \ref{prop:10}, the center of the product $D_1^0\times \dots \times D_m^0$ equals  the products of those factors of $D_1^0, \dots, D_m^0$ which are isomorphic to ${\bf 2}$. Thus the center of $_{\mathcal L}{\mathbf{SBIA}}_n$ is isomorphic to ${\bf 2}^{2^n-1}$ which is isomorphic to ${\mathbf{GBA}}_n$.
 \end{proof}
 
 Part (1) of the following result is not surprising, given that ${\mathcal D}$ is not a congruence on $_{\mathcal L}{\mathbf{SBIA}}_n$ but only on its SBA reduct. Let $\gamma \colon  {_{\mathcal L}{\mathbf{SBIA}}_n}\to \,{_{\mathcal L}{\mathbf{SBIA}}_n}/\theta$ be the universal morphism to the maximal commutative quotient.  As $_{\mathcal L}{\mathbf{SBIA}}_n$ is free over $X_n$, ${_{\mathcal L}{\mathbf{SBIA}}_n}/\theta$ is a free  GBA over $\{\gamma(x)\colon x\in X_n\}$. Part (2) of the result below shows that $\gamma(x)$, $x\in X_n$, are all pairwise distinct.

 \begin{corollary} \label{cor:free11} \mbox{}
 \begin{enumerate}
 \item $_{\mathcal L}{\mathbf{SBIA}}_n/{\mathcal D}$, the partition analogue of  ${\mathbf{GBA}}_n$, is isomorphic to ${\bf 2}^{B_{n+1}-1}$ and  thus is not isomorphic to a free GBA.
 \item The maximal commutative quotient of $_{\mathcal L}{\mathbf{SBIA}}_n$ is isomorphic to ${\bf 2}^{2^n-1}$ which is isomorphic to ${\mathbf{GBA}}_n$. 
 \end{enumerate}
 \end{corollary}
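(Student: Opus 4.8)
The plan is to read off both isomorphisms from the combinatorial data already assembled in Theorem~\ref{th:combinatorial} together with the structure theory of Proposition~\ref{prop:10}, so that the only point requiring genuinely new work is the non-freeness assertion in part~(1), which I would reduce to an arithmetic property of Bell numbers.

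For part~(1) I would first recall that, since $_{\mathcal L}{\mathbf{SBIA}}_n$ is finite, its quotient $_{\mathcal L}{\mathbf{SBIA}}_n/{\mathcal D}$ is a finite GBA, hence isomorphic to ${\bf 2}^m$ where $m$ is its number of atoms; and these atoms are, by definition, exactly the atomic ${\mathcal D}$-classes of $_{\mathcal L}{\mathbf{SBIA}}_n$. Theorem~\ref{th:combinatorial}\eqref{c1} counts them as $B_{n+1}-1$, giving $_{\mathcal L}{\mathbf{SBIA}}_n/{\mathcal D}\simeq {\bf 2}^{B_{n+1}-1}$ at once. To conclude non-freeness I would use that a finite free GBA on $k$ generators is ${\bf 2}^{2^k-1}$, so its atom count is always one less than a power of two; thus ${\bf 2}^{B_{n+1}-1}$ is free precisely when $B_{n+1}$ is itself a power of two.

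The main obstacle is therefore the number-theoretic statement that $B_{n+1}$ is not a power of two for $n\geq 2$ (note that $n=1$ is a genuine exception, since $B_2=2$). I would attack it through the parity of Bell numbers: Touchard's congruence modulo $2$ gives $B_{m+2}\equiv B_m+B_{m+1}\pmod 2$, whence $(B_m\bmod 2)$ is periodic of period $3$ with pattern $1,1,0$, so $B_m$ is odd unless $m\equiv 2\pmod 3$. When $B_{n+1}$ is odd it exceeds $1$ for $n\geq 2$ and is trivially not a power of two; the delicate residue class is $n+1\equiv 2\pmod 3$, where $B_{n+1}$ is even, and there one must still exclude a \emph{pure} power of two by showing that the odd part of $B_{n+1}$ is strictly larger than $1$ (for instance by exhibiting a fixed odd prime dividing $B_{n+1}$ through a suitable congruence, the smallest such case being $B_5=52=2^2\cdot 13$). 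Isolating and settling this even case is where the real effort of part~(1) lies.

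For part~(2) essentially no new computation is needed. By Proposition~\ref{prop:10}(3) the maximal commutative quotient of $_{\mathcal L}{\mathbf{SBIA}}_n$ coincides with its center, and Theorem~\ref{th:combinatorial}\eqref{c4} identifies that center with ${\bf 2}^{2^n-1}$, which is ${\mathbf{GBA}}_n$ because the free GBA on $n$ generators has exactly $2^n-1$ atoms, one for each non-empty subset of $X_n$. I would moreover record the conceptual reason: the maximal commutative quotient of a free algebra is free in the commutative subvariety, and commutative left-handed SBIAs are exactly GBAs (the case $\sqcap=\wedge$); hence the quotient is free over the images $\gamma(x)$ of the generators, and matching the cardinality ${\bf 2}^{2^n-1}$ forces these images to be pairwise distinct, so the quotient is $\mathbf{GBA}_n$ on all $n$ generators rather than a free GBA on fewer.
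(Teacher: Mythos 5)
Your route coincides with the paper's own: part (2) is exactly the paper's proof (Proposition~\ref{prop:10}(3) combined with Theorem~\ref{th:combinatorial}\eqref{c4}), and the identification $_{\mathcal L}{\mathbf{SBIA}}_n/{\mathcal D}\simeq {\bf 2}^{B_{n+1}-1}$ in part (1) is read off from Theorem~\ref{th:combinatorial}\eqref{c1}, just as the paper does. Where you diverge is in taking the non-freeness claim seriously: the paper's proof of (1) is the bare citation of Theorem~\ref{th:combinatorial}\eqref{c1}, i.e.\ it treats as immediate that $B_{n+1}-1$ is never of the form $2^k-1$. You are right that this is the actual mathematical content of the assertion, and your observation that $n=1$ is a genuine exception ($B_2=2$, so $_{\mathcal L}{\mathbf{SBIA}}_1/{\mathcal D}\simeq{\bf 2}\simeq{\mathbf{GBA}}_1$ \emph{is} free) identifies a point the paper silently elides: as stated, the corollary needs $n\geq 2$.

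However, your own argument for part (1) is incomplete precisely where you say the real effort lies: nothing in the proposal disposes of the residue class $n+1\equiv 2\pmod 3$, where $B_{n+1}$ is even, and the fix you float --- a fixed odd prime dividing $B_{n+1}$ exhibited by a congruence --- cannot work as stated, since no single odd prime serves: $B_5=2^2\cdot 13$ is divisible by neither $3$ nor $5$, while $B_8=2^2\cdot 3^2\cdot 5\cdot 23$ is not divisible by $13$. The robust way to close the gap is through the $2$-adic valuation: the sequence $(B_m \bmod 8)$ is known to be periodic (its minimal period is $24$, by Lunnon--Pleasants--Stephens type results on Bell numbers modulo composite integers), and inspection of one period shows that $8\nmid B_m$ for all $m$, i.e.\ $\nu_2(B_m)\leq 2$; hence $B_m=2^k$ would force $B_m\in\{1,2,4\}$, which happens only for $m\leq 2$. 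With that (known but nontrivial) input your argument is complete for all $n\geq 2$. So the verdict is: same approach as the paper throughout, part (2) correct and complete, but part (1) as written has an acknowledged unfinished step --- one which, it should be said, the published proof does not address at all, so your proposal, once patched as above, is more rigorous than the paper's own two-line proof rather than merely equivalent to it.
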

 
 \begin{proof} (1) follows from part \eqref{c1} of Theorem \ref{th:combinatorial}.
 (2) is a consequence of part (3) of Proposition \ref{prop:10} and part \eqref{c4} of Theorem \ref{th:combinatorial}.
 \end{proof}

 \begin{proposition}\label{prop:gen}
 Let $S$ be a left-handed SBIA and $n\geq 1$. Then $S$ can be generated by $n$ its elements (as an SBIA) if and only if $S$ is a isomorphic to a direct factor of $_{\mathcal L}{\mathbf{SBIA}}_n$. In other words, if $S\simeq {\bf 2}^{k_2}\times {\bf 3}_L^{k_3}\dots \times {\bf m}_L^{k_m}$
 with $k_2,k_3,\dots,k_m\geq 0$, then $S$ can be generated by $n$ elements of $S$ if and only if $m\leq n+1$ and
 $k_i\leq \stirling{n+1}{i}$ for each $i=1,\dots, m$.
 \end{proposition}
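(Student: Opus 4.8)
The plan is to recast the generation condition as a statement about quotients of $_{\mathcal L}{\mathbf{SBIA}}_n$ and then read off the answer from the classification of quotients in Proposition~\ref{prop:10}. The link between the two viewpoints is the defining universal property of the free algebra: every map from the free generating set $X_n$ into a left-handed SBIA extends uniquely to a homomorphism of SBIAs out of $_{\mathcal L}{\mathbf{SBIA}}_n$.

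For the forward implication, suppose $S$ is generated by elements $s_1,\dots,s_n$. Sending $x_i\mapsto s_i$ and extending yields a homomorphism $\varphi\colon {_{\mathcal L}{\mathbf{SBIA}}_n}\to S$ whose image is the subalgebra generated by the $s_i$, hence all of $S$; so $\varphi$ is onto and $S$ is a quotient of $_{\mathcal L}{\mathbf{SBIA}}_n$. Because $_{\mathcal L}{\mathbf{SBIA}}_n$ is finite by Theorem~\ref{th:combinatorial}, $S$ is finite, and Proposition~\ref{prop:structure} writes it as ${\bf 2}^{k_2}\times\dots\times {\bf m}_L^{k_m}$. The description of congruences in Proposition~\ref{prop:cong} shows that a quotient of $_{\mathcal L}{\mathbf{SBIA}}_n$ only involves primitive factors already occurring in $_{\mathcal L}{\mathbf{SBIA}}_n$, so $m\le n+1$; expressing both algebras in the common form with top factor ${\bf (n+1)}_L$ and applying Proposition~\ref{prop:10}(1), I conclude that $S$ is a direct factor of $_{\mathcal L}{\mathbf{SBIA}}_n$. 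Comparing with the explicit factorization in Theorem~\ref{th:combinatorial}(2) turns the direct-factor condition into exactly $k_i\le\stirling{n+1}{i}$ for all $i$.

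The converse runs the same chain backwards. If $S$ is isomorphic to a direct factor of $_{\mathcal L}{\mathbf{SBIA}}_n$, then Proposition~\ref{prop:10}(1) makes $S$ a quotient of $_{\mathcal L}{\mathbf{SBIA}}_n$, so there is a surjection $\psi\colon {_{\mathcal L}{\mathbf{SBIA}}_n}\to S$; the images $\psi(x_1),\dots,\psi(x_n)$ then generate $S$, which is therefore generated by $n$ elements.

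There is no genuine computation in this argument; the only subtlety is the finiteness under which Proposition~\ref{prop:10} operates, which I regard as the main point to get right rather than a real obstacle. In the forward direction finiteness of $S$ is inherited from $_{\mathcal L}{\mathbf{SBIA}}_n$ via the surjection $\varphi$, and the bound $m\le n+1$ (as opposed to merely the inequalities $k_i\le\stirling{n+1}{i}$) must be read off from the congruence structure of Proposition~\ref{prop:cong} before Proposition~\ref{prop:10}(1) can be applied. In the backward direction the finite product form is built into the hypothesis. Once these reductions are in place, the equivalence between being a quotient and being a direct factor furnished by Proposition~\ref{prop:10}(1) delivers the entire statement, so the proof amounts to the universal property of $_{\mathcal L}{\mathbf{SBIA}}_n$ plus a single application of that proposition in each direction.
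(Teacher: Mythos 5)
Your proof is correct, but at the decisive step it takes a different route from the paper. Both arguments begin identically: by the universal property of the free algebra, $S$ is generated by $n$ elements if and only if there is a surjective SBIA homomorphism $\psi\colon {_{\mathcal L}{\mathbf{SBIA}}_n}\to S$. From there the paper does \emph{not} invoke Proposition~\ref{prop:10}(1); instead it argues via the non-commutative Stone duality of \cite{Kud, BCV} in the discrete (finite) case: the surjection $\psi$ induces an injective map from atoms of $S/{\mathcal D}$ into atoms of ${_{\mathcal L}{\mathbf{SBIA}}_n}/{\mathcal D}$, hence an injection $\varphi$ on atomic ${\mathcal D}$-classes, and injectivity of the dual \'etale-space cohomomorphism together with surjectivity of $\psi$ yields a bijection $\varphi(D)\to D$ for each atomic ${\mathcal D}$-class $D$ of $S$ --- in effect re-deriving, factor by factor, that $S$ sits inside the factorization of Theorem~\ref{th:combinatorial}(2). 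You instead route everything through the already-available quotient classification: Proposition~\ref{prop:cong} (equivalently Proposition~\ref{prop:10}(1)) shows that quotients of a finite left-handed SBIA are exactly its direct factors, and comparison with Theorem~\ref{th:combinatorial}(2) gives $m\le n+1$ and $k_i\le\stirling{n+1}{i}$; the converse is the projection onto a direct factor followed by the observation that images of generators generate. Your version is shorter and stays entirely inside the paper's finite structure theory, whereas the paper's duality argument exhibits explicitly how each atomic ${\mathcal D}$-class of $S$ is matched bijectively with one of the free algebra, a formulation that does not presuppose writing $S$ in product normal form and adapts beyond this finite setting. One small streamlining of your write-up: you do not really need a separate appeal to Proposition~\ref{prop:cong} to get $m\le n+1$ first --- padding both factorizations with zero exponents up to a common top index $\max(m,n+1)$ and applying Proposition~\ref{prop:10}(1) once forces $k_i\le 0$ for all $i>n+1$, which yields $m\le n+1$ and the inequalities simultaneously.
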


 \begin{proof} $S$ can be generated by $n$ elements if and only if there is a surjective homomorphism, $\psi$, from $_{\mathcal L}{\mathbf{SBIA}}_n$ onto $S$. From the duality theorem for left-handed SBIAs \cite{Kud, BCV} applied for the finite case where topologies are discrete, it follows that there is an injective map from atoms of $S/{\mathcal D}$ to atoms of $_{\mathcal L}{\mathbf{SBIA}}_n/{\mathcal D}$. (This is the map $\overline{\psi}^{-1}$, where $\overline{\psi}\colon _{\mathcal L}{\mathbf{SBIA}}_n/{\mathcal D}\to S/{\mathcal D}$ is the surjective GBA homomorphism induced by $\psi$.) This gives rise to an injective map, $\varphi$, from  atomic ${\mathcal D}$-classes of $S$ to atomic ${\mathcal D}$-classes of $_{\mathcal L}{\mathbf{SBIA}}_n$.
Moreover, for every atomic ${\mathcal D}$-class class $D$ of $S$ there is a bijection $\varphi(D)\to D$.  Indeed, by the duality theory \cite{Kud}, the corresponding \'etale space cohomomorphism is injective, so that each map $\varphi(D)\to D$ is injective. Each such a map must be also surjective, as $\psi$ is surjective. The result follows.
 \end{proof}
 
 The construction of a surjective homomorphism from $_{\mathcal L}{\mathbf{SBIA}}_n$ to any given $S$, whenever it exists, can be tracked back from our theory. An example of such a construction is given below.
 
 \begin{example}
 {\em Let $S={\bf 3}_L^6$. Since $\stirling{4}{3}=6$, $S$ is isomorphic to a direct factor of $_{\mathcal L}{\mathbf{SBIA}}_3$. The two-element atomic ${\mathcal D}$-classes of $_{\mathcal L}{\mathbf{SBIA}}_3$ are of the  form $\{(X_3,\alpha, A), (X_3,\alpha, B)\}$ where $\alpha=\{A,B\}$. Note that ${\bf 3}_L^6$, considered as a direct factor of $_{\mathcal L}{\mathbf{SBIA}}_3$, is generated by respective restrictions, $y_1,y_2$ and $y_3$, of $x_1,x_2$ and $x_3$. Applying Corollary \ref{cor:x}, these restrictions can be written, for each $i=1,2,3$, as
 $$y_i=\vee \{(X_3,\alpha, A)\colon \mathrm{rank}(\alpha)=2 \text{ and } y_i\in A\}.$$ To write down an explicit isomorphism between the direct factor ${\bf 3}_L^6$ of $_{\mathcal L}{\mathbf{SBIA}}_3$ and $S$, we make the following assignments:
$$\begin{array}{llllll}
(X_3, x_1|x_2, \{x_1\}) & \mapsto & (1,0,0,0,0,0),\, & (X_3, x_1|x_2, \{x_2\}) & \mapsto & (2,0,0,0,0,0),\\
(X_3, x_1|x_3, \{x_1\}) & \mapsto & (0,1,0,0,0,0),\, & (X_3, x_1|x_3, \{x_3\}) & \mapsto & (0,2,0,0,0,0),\\
(X_3, x_2|x_3, \{x_2\}) & \mapsto & (0,0,1,0,0,0),\, & (X_3, x_2|x_3, \{x_3\})& \mapsto & (0,0,2,0,0,0),\\
(X_3, x_1x_2|x_3, \{x_1,x_2\})& \mapsto & (0,0,0,1,0,0),\, & (X_3, x_1x_2|x_3, \{x_3\})& \mapsto & (0,0,0,2,0,0),\\
(X_3, x_1x_3|x_2, \{x_1,x_3\})& \mapsto & (0,0,0,0,1,0),\, & (X_3, x_1x_3|x_2, \{x_2\})& \mapsto & (0,0,0,0,2,0),\\
(X_3, x_1|x_2x_3, \{x_1\})& \mapsto & (0,0,0,0,0,1),\, & (X_3, x_1|x_2x_3, \{x_2,x_3\})& \mapsto & (0,0,0,0,0,2).
 \end{array}
 $$
 Then $y_1\mapsto (1,1,0,1,1,1)$, $y_2\mapsto (2,0,1,1,2,2)$, $y_3\mapsto (0,2,2,2,1,2)$, and the obtained three elements generate $S={\bf 3}_L^6$ as an SBIA. Note that $S$ can not be generated by three generators as an SBA, but at least by four generators, see \cite{KL}.}
 \end{example}

 \subsection{Infinine free algebras} Let now $X$ be an infinite set. If $a$ is a term over a finite subset $Y$ of $X$, and $b$ is a term over a finite subset $Z$ of $X$, Theorem \ref{th:normal_forms} allows us to rewrite them as normal forms over $Y$ and $Z$, respectively, and then, using the Decomposition Rule, rewrite each as a normal form over the finite set $Y\cup Z$.  Then the results of applications of operations $\wedge$, $\vee$, $\setminus$ and $\sqcap$ to $a$ and $b$ can be calculated using Theorem \ref{th:normal_forms} and have natural interpretations in terms of pointed partitions and their containments.

 The following is a standard consequence of Proposition \ref{prop:free} and the fact that a term over any alphabet $X$ involves only a finite number of variables.

\begin{proposition}\label{prop:free_infinite} Let $S$ be a left-handed SBIA and assume that $X\subseteq S$. The following are equivalent:
\begin{enumerate}
\item The subalgebra $\langle X\rangle$ is free over $X$.
\item  All elementary elements over all finite subsets of $X$ are non-zero.
\item For any finite subset $Y\subseteq X$ the subalgebra $\langle Y\rangle$ is free over $Y$.
\end{enumerate}
\end{proposition}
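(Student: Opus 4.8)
The plan is to reduce everything to the finite case treated in Proposition~\ref{prop:free}, exploiting the fact that every term over the alphabet $X$ involves only finitely many letters. I would organize the argument as the two equivalences $(2)\Leftrightarrow(3)$ and $(1)\Leftrightarrow(3)$, after which the three-way equivalence is immediate.

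The equivalence $(2)\Leftrightarrow(3)$ is essentially formal. For each fixed finite subset $Y\subseteq X$, Proposition~\ref{prop:free} (applied with $Y$ in the role of $X_n$) states that $\langle Y\rangle$ is free over $Y$ precisely when all elementary elements over $Y$ are non-zero. Statement~(3) asserts that the former holds for every finite $Y$, while statement~(2) asserts that the latter holds for every finite $Y$; so the two are the same assertion, quantified over finite subsets of $X$.

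For $(1)\Leftrightarrow(3)$ I would invoke the standard finitary behaviour of free algebras in a variety. Two facts underlie the argument. First, since each term uses only finitely many generators, both ${}_{\mathcal L}{\bf SBIA}_{X}$ and $\langle X\rangle$ are the directed unions, over finite $Y\subseteq X$, of ${}_{\mathcal L}{\bf SBIA}_{Y}$ and $\langle Y\rangle$ respectively, and the canonical surjection $\varphi\colon {}_{\mathcal L}{\bf SBIA}_{X}\to\langle X\rangle$ restricts on each piece to the canonical surjection $\varphi_Y\colon {}_{\mathcal L}{\bf SBIA}_{Y}\to\langle Y\rangle$. Second, for non-empty finite $Y\subseteq X$ the canonical map ${}_{\mathcal L}{\bf SBIA}_{Y}\to{}_{\mathcal L}{\bf SBIA}_{X}$ is injective, since any retraction $X\to Y$ fixing $Y$ induces a left inverse, cf.\ \cite{BS}; this is what legitimizes treating ${}_{\mathcal L}{\bf SBIA}_{Y}$ as a subalgebra of ${}_{\mathcal L}{\bf SBIA}_{X}$ in the directed union. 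Granting these, $\langle X\rangle$ is free over $X$ iff $\varphi$ is injective (it is always surjective), and $\varphi$ is injective iff each restriction $\varphi_Y$ is injective: any two elements of the domain lie together in a common piece ${}_{\mathcal L}{\bf SBIA}_{Y}$, and on that piece $\varphi$ agrees with $\varphi_Y$. By definition $\varphi_Y$ is injective exactly when $\langle Y\rangle$ is free over $Y$, which is statement~(3). Concretely: if (1) holds and $s=t$ in $\langle Y\rangle$ with variables in $Y$, the same equation holds in $\langle X\rangle$, hence is a consequence of the defining identities, hence already holds in ${}_{\mathcal L}{\bf SBIA}_{Y}$; conversely, if (3) holds and $s=t$ in $\langle X\rangle$, taking $Y$ to be the finite set of variables occurring in $s$ and $t$ reduces the equation to $\langle Y\rangle$, whence it is a consequence of the identities.

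I do not expect a serious obstacle here; the substance is carried entirely by Proposition~\ref{prop:free} together with the routine compatibility of the free-algebra construction with directed unions and with inclusions of generating sets. The only points meriting a moment's care are that an equation whose variables lie in $Y$ holds in $\langle X\rangle$ if and only if it holds in the subalgebra $\langle Y\rangle$ (both sides being computed by the same evaluation in $S$), and the injectivity of ${}_{\mathcal L}{\bf SBIA}_{Y}\to{}_{\mathcal L}{\bf SBIA}_{X}$, so that the phrase ``is a consequence of the defining identities'' may be read consistently at the $Y$-level and at the $X$-level. These are standard facts about varieties \cite{BS}, which is exactly why the proposition is justly described as a standard consequence of the finite case. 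The empty-$Y$ case is vacuous and may be ignored.
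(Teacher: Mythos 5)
Your proof is correct and follows exactly the route the paper intends: the paper gives no detailed argument, stating only that the proposition is ``a standard consequence of Proposition~\ref{prop:free} and the fact that a term over any alphabet $X$ involves only a finite number of variables,'' and your write-up is precisely a careful elaboration of that standard argument (the $(2)\Leftrightarrow(3)$ equivalence via Proposition~\ref{prop:free} applied to each finite $Y$, and $(1)\Leftrightarrow(3)$ via directed unions and the retraction-based injectivity of ${}_{\mathcal L}{\bf SBIA}_{Y}\to{}_{\mathcal L}{\bf SBIA}_{X}$). No gaps; the points you flag for care, including the vacuous empty-$Y$ case, are handled appropriately.
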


The following result is proved similarly as the corresponding result of \cite[Section 6]{KL}.

\begin{proposition}\label{prop:center_atoms} Let $X$ be an infinite set.
\begin{enumerate}
\item $_{\mathcal L}{\bf SBIA}_{X}$ is atomless.
\item The center of $_{\mathcal L}{\bf SBIA}_{X}$ is trivial.
\end{enumerate}
\end{proposition}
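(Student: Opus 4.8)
The plan is to prove both parts by exhibiting, for each potential atom or central element, an explicit refinement that witnesses its failure to be atomic or central, using the Decomposition Rule (Theorem~\ref{th:branching}) as the engine. Since $X$ is infinite, any element $a\in {_{\mathcal L}{\bf SBIA}_{X}}$ is a term involving only finitely many generators, so by Proposition~\ref{prop:free_infinite} it lies in a free finite subalgebra $\langle Y\rangle$ over some finite $Y\subseteq X$, and by Theorem~\ref{th:normal_forms}\eqref{ii2} it can be written as a normal form $\vee\,\mathcal{E}$ over $Y$ whose clauses, by Theorem~\ref{th:normal_forms}\eqref{ii4}, are exactly the atoms of $\langle Y\rangle$ below $a$.

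For part~(1), I would show no nonzero element is an atom. It suffices to show that a single elementary element $e(Y,\alpha,A)$, which is an atom of $\langle Y\rangle$, splits into strictly smaller nonzero pieces inside $\langle Z\rangle$ for a suitable larger finite $Z=Y\cup\{t\}$ with $t\in X\setminus Y$ (such $t$ exists as $X$ is infinite). By the Decomposition Rule,
\begin{equation*}
e(Y,\alpha,A)=\vee\{e(Z,\beta,A\!\uparrow_{\alpha}^{\beta})\colon (Y,\alpha)\preceq (Z,\beta)\},
\end{equation*}
an orthogonal join over at least two distinct partitions $(Z,\beta)$ (for instance $(Z,\alpha)$ itself versus a $\beta$ in which $t$ is adjoined to a block or forms a singleton). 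By Proposition~\ref{prop:free_infinite} all these elementary elements over the finite set $Z$ are nonzero and, by Corollary~\ref{cor:12}\eqref{iii4}, pairwise distinct, so $e(Y,\alpha,A)$ strictly dominates each summand and hence is not an atom. Since every nonzero $a$ dominates some such clause, nothing is an atom, proving atomlessness.

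For part~(2), recall that the center of a (left-handed) SBIA is the set of elements $c$ with $c\wedge x = x\wedge c$ for all $x$, equivalently $c\sim x$ for all $x$; by Lemma~\ref{lem:compat} this means $c\wedge x = c\sqcap x$ for every $x$. I would argue that if $c\neq 0$ then $c$ fails this for some generator. Writing $c$ as a normal form over a finite $Y$, pick a clause $e(Y,\alpha,A)$ below $c$ and a generator $t\in X\setminus Y$; the two summands $e(Z,\beta,\cdot)$ and $e(Z,\beta',\cdot)$ arising above sit in a common $\mathcal{D}$-class precisely when their partitions coincide, and one checks via Proposition~\ref{prop:crucial} that $c\wedge t\neq c\sqcap t$, so $c\not\sim t$ and $c$ is noncentral. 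Alternatively, and more cleanly, I would invoke the structural picture: centrality of $c$ forces $[c]$ to be a union of singleton $\mathcal{D}$-classes, but every nonzero $c$ lies above a clause whose $\mathcal{D}$-class has rank equal to $\mathrm{rank}(\alpha)$, and refining by adjoining $t$ produces clauses of rank $\geq 2$ (e.g. splitting a singleton block into a two-block partition containing $t$), contradicting the singleton requirement.

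The main obstacle I anticipate is part~(2): making precise the claim that a nonzero element cannot be central requires carefully choosing the refinement so that the resulting clause genuinely sits in a nonsingleton $\mathcal{D}$-class, and then translating ``noncentral'' into an explicit inequality $c\wedge t\neq t\wedge c$ that I can verify through Proposition~\ref{prop:crucial} and Lemma~\ref{lem:compat}. Part~(1) is comparatively routine once the Decomposition Rule guarantees a nontrivial orthogonal splitting with distinct nonzero clauses. The overall strategy mirrors the treatment of free SBAs in \cite[Section~6]{KL}, with set partitions and the containment order $\preceq$ replacing pointed subsets throughout.
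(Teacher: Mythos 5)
Your proposal is correct, and it is worth noting that the paper itself gives no written proof of this proposition: it only remarks that the result ``is proved similarly as the corresponding result of \cite[Section 6]{KL}''. What you have written is precisely the partition-analogue of that argument made explicit with this paper's machinery, so you have in effect reconstructed the intended proof rather than diverged from it. Part (1) is airtight as you state it (for strictness of the splitting: if one summand of the orthogonal join equalled the join, any other summand $s'$ would satisfy $s'=s'\wedge s=0$, contradicting freeness via Proposition \ref{prop:free_infinite}). Two small points would tighten part (2). First, your route (a) closes most cleanly not by testing $c$ against the generator $t$ but against the sibling atom: with $Z=Y\cup\{t\}$, $\beta=\alpha\cup\{\{t\}\}$, $f=e(Z,\beta,A)$ a clause of $c$ over $Z$ and $f'=e(Z,\beta,\{t\})$, Theorem \ref{th:normal_forms}\eqref{ii1} gives $c\wedge f'=f$ (the unique clause of $c$ with support $\beta$) while $f'\wedge c=f'$, and $f\neq f'$ by freeness and Corollary \ref{cor:d}; testing against $t$ itself also works via Corollary \ref{cor:x} and the uniqueness of normal forms (Theorem \ref{th:normal_forms}\eqref{ii3b}), but then you must compare two clause families rather than two atoms. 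Second, your route (b) is misphrased: $[c]$ is a single $\mathcal{D}$-class, so it cannot be ``a union of singleton $\mathcal{D}$-classes''; the correct structural statement is that centrality of $c$ in $_{\mathcal L}{\bf SBIA}_X$ implies centrality in the finite free subalgebra $\langle Z\rangle$, whose center by Proposition \ref{prop:10}(3) and Corollary \ref{cor:12}\eqref{iii3} consists exactly of the elements all of whose clauses have rank-one supports --- and the clause $e(Z,\alpha\cup\{\{t\}\},A)$ of rank $\mathrm{rank}(\alpha)+1\geq 2$ produced by the Decomposition Rule violates this. With those repairs both of your routes are complete proofs.
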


We now present a construction of $_{\mathcal L}{\bf SBIA}_{X}$. Let
$${\mathcal X} = \{(X,\alpha)\colon \alpha\in {\mathcal P}(Y) \text{ where } Y\subseteq X \text{ and } Y \neq\varnothing\}$$
be the set of all partitions of all non-empty subsets of $X$ and
$$
\Omega = \{(X,\alpha, A)\colon (X,\alpha) \in {\mathcal X} \text{ and } A \in \alpha\}.
$$
be the set of pointed such partitions.
We define $p\colon \Omega\to {\mathcal X}$ by $p(X,\alpha, A)=(X,\alpha)$. Let ${\bf S}_{\Omega}$ be the class of subsets $U$ of $\Omega$ for which the restriction of the map $p$ to $U$ is injective. On ${\bf S}_{\Omega}$ we define the binary operations  $\vee$, $\wedge$, $\setminus$ and $\sqcap$ by:
{
$$
\begin{array}{lcl}
U \wedge V & = & \{(X,\alpha, A) \in U\colon (X,\alpha) \in  p(U) \cap p(V)\},\\
U\vee V & = & (U\setminus V)\cup V = \{(X,\alpha,A)\in U\cup V\colon (X,\alpha) \in p(U)\setminus p(V) \text{ or } (X,\alpha) \in p(V)\},\\
U \setminus  V & = &\{(X,\alpha, A) \in U\colon (X,\alpha) \in p(U)\setminus  p(V)\},\\
U\sqcap V & =  & U\cap V.
\end{array}
$$
}
It is easy to verify that
$({\bf S}_{\Omega};  \wedge,\vee \setminus, \varnothing, \sqcap)$ is a left-handed SBIA.
We next define $i\colon X \to {\bf S}_{\Omega}$ by
$$i(x) = \{(X,\alpha, A)\colon x\in {\mathrm{dom}}(\alpha) \text{ and } x\in A\}.$$

 This map is clearly injective. We next let
 $\overline{X} = \{i(x) \colon x \in X\}$ and let ${\bf S}_X = \langle \overline{X}\rangle$ be the subalgebra of ${\bf S}_{\Omega}$ generated by $\overline{X}$.

\begin{theorem}\label{th:5.8}
${\bf S}_X$ is freely generated by $\overline{X}$.
\end{theorem}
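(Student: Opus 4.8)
The plan is to invoke Proposition \ref{prop:free_infinite}. Since $i$ is injective, $\overline{X}$ is in bijection with $X$, and every finite subset of $\overline{X}$ has the form $\overline{Y}=\{i(y)\colon y\in Y\}$ for a finite $Y\subseteq X$. Thus it suffices to show that every elementary element $e(\overline{Y},\alpha,A)$ over every such $\overline{Y}$ is non-zero, which in $\mathbf{S}_{\Omega}$ (whose zero is $\varnothing$) means non-empty. Taking for granted the paper's assertion that $(\mathbf{S}_{\Omega};\wedge,\vee,\setminus,\varnothing,\sqcap)$ is a left-handed SBIA, I would fix such a $\overline{Y}$ and transport $\alpha$ along the bijection $i$ to a partition $\widehat{\alpha}$ of a non-empty subset $\widehat{W}\subseteq Y$, with marked block $\widehat{A}$ corresponding to $A$; each block of $\alpha$ then reads as a set of generators $\{i(y)\colon y\in B\}$ with $B$ a block of $\widehat{\alpha}$.

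First I would record how the support map $p$ interacts with the operations: from the defining formulas one reads off $p(U\vee V)=p(U)\cup p(V)$, that $U\wedge V=\{u\in U\colon p(u)\in p(V)\}$ is a restriction, and, crucially, that $U\setminus V=\{u\in U\colon p(u)\notin p(V)\}$. Next I would compute the building blocks of $e(\overline{Y},\alpha,A)=p\setminus(\vee Q)$: since $\sqcap$ is intersection, $\sqcap\{i(y)\colon y\in B\}=\{(X,\beta,C)\colon B\subseteq\mathrm{dom}(\beta),\ B\subseteq C\}$, whose support consists of those $(X,\beta)$ in which $B$ lies inside a single block. Feeding these into the formulas for $p$ and $Q$ and unwinding the conditions, I expect the clean description
\begin{equation*}
e(\overline{Y},\alpha,A)=\bigl\{(X,\beta,C)\in\Omega\colon (Y,\widehat{\alpha})\preceq(X,\beta)\ \text{and}\ C=\widehat{A}\!\uparrow_{\widehat{\alpha}}^{\beta}\bigr\},
\end{equation*}
where the subtracted clauses of $Q$ translate exactly into the containment order: the terms $i(y)$ with $y\in Y\setminus\widehat{W}$ force $\mathrm{dom}(\beta)\cap Y=\widehat{W}$, while the terms $\sqcap(A_i\cup A_j)$ force distinct blocks of $\widehat{\alpha}$ to lie in distinct blocks of $\beta$; together with membership in $p$ this says precisely that $\beta$ restricts to $\widehat{\alpha}$ on $\widehat{W}$. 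This identification also confirms that the result is a legitimate element of $\mathbf{S}_{\Omega}$ ($p$ is injective on it, as $C$ is determined by $\beta$) and that it refines under the Decomposition Rule of Theorem \ref{th:branching} exactly as the abstract theory predicts.

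For non-emptiness I do not even need the full description: it is enough to exhibit one witness, and the natural candidate is $(X,\widehat{\alpha},\widehat{A})$ itself, viewed as a pointed partition of the subset $\widehat{W}\subseteq X$. I would check directly that it lies in $p$ (each block $B$ of $\widehat{\alpha}$ trivially sits in a single block of $\widehat{\alpha}$, and $\widehat{A}$ is the marked block) and that its support $(X,\widehat{\alpha})$ avoids every $p(V)$ with $V\in Q$ (no $y\in Y\setminus\widehat{W}$ lies in $\mathrm{dom}(\widehat{\alpha})=\widehat{W}$, and no union $B_i\cup B_j$ of two distinct blocks of $\widehat{\alpha}$ fits inside one block of $\widehat{\alpha}$). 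Hence $(X,\widehat{\alpha},\widehat{A})\in p\setminus(\vee Q)=e(\overline{Y},\alpha,A)$, so the elementary element is non-empty. Applying Proposition \ref{prop:free_infinite} then yields that $\mathbf{S}_{X}=\langle\overline{X}\rangle$ is freely generated by $\overline{X}$.

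The routine but error-prone heart of the argument is the bookkeeping of the second paragraph: correctly propagating $\wedge,\vee,\setminus,\sqcap$ through $p$, matching the clauses of $Q$ against the definition of $\preceq$, and keeping straight the translation between blocks of $\alpha$ (sets of generators) and blocks of $\widehat{\alpha}$ (subsets of $Y$), while handling the degenerate cases $\widehat{W}=Y$ and $\mathrm{rank}(\widehat{\alpha})=1$, in which $Q$ loses some of its terms. If one wishes to sidestep computing the whole set, the witness verification of the third paragraph is self-contained and already establishes the theorem, so the genuine obstacle reduces to confirming that the candidate $(X,\widehat{\alpha},\widehat{A})$ survives every relative complement appearing in $p\setminus(\vee Q)$.
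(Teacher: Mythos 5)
Your proposal is correct and follows essentially the same route as the paper: you evaluate the elementary elements in ${\bf S}_{\Omega}$, arrive at exactly the paper's identification $e(\overline{Y},\alpha,A)=\{(X,\beta,\widehat{A}\!\uparrow_{\widehat{\alpha}}^{\beta})\in\Omega\colon (Y,\widehat{\alpha})\preceq(X,\beta)\}$, conclude non-emptiness, and close via Proposition \ref{prop:free_infinite}. Your observation that the single witness $(X,\widehat{\alpha},\widehat{A})$ suffices without the full set-level computation is a minor but sound economy over the paper's argument, which asserts the full description and reads off non-emptiness from it.
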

\begin{proof} Given a finite subset $X_n=\{x_1, x_2, \dots, x_n\}$ of $X$, we show that each elementary element over  $X_n$, when evaluated on $\{i(x_1), i(x_2), \dots, i(x_n)\}$, is non-empty. We observe that:
{$$
\begin{array}{lcl}
i(x)\wedge i(y) & = & \{(X,\alpha, A) \in \Omega\colon x,y\in {\mathrm{dom}}(\alpha) \text{ and } x\in A\},\\
i(x) \setminus i(y)& = & \{(X,\alpha, A) \in \Omega\colon x\in {\mathrm{dom}}(\alpha) \text{ and } y\not\in {\mathrm{dom}}(\alpha) \},\\
i(x)\sqcap i(y) & = & \{(X,\alpha, A) \in \Omega\colon x,y\in {\mathrm{dom}}(\alpha) \text{ and } x,y\in A\}.

\end{array}
$$
}
As a consequence we obtain that the evaluation of $e(X_n,\alpha,A)$ on $\{i(x_1), i(x_2), \dots, i(x_n)\}$ equals
\begin{equation*}
\{(X,\beta, A\!\uparrow_{\alpha}^{\beta}) \in \Omega \colon  (X_n,\alpha)\preceq (X,\beta)\}
\end{equation*}
which is non-empty.
By Proposition \ref{prop:free_infinite}, $\langle \overline{X}\rangle$ is isomorphic to $_{\mathcal L}{\bf SBIA}_{X}$. \end{proof}

\begin{corollary}\label{cor:over_d}
Any element of $_{\mathcal L}{\bf SBIA}_{X}/{\mathcal D}$ is a join of a finite number of 
elements of the form $\mathrm{i}(Y,\alpha)$ with $Y$ being a finite non-empty subset of $X$ and $\alpha$ a partition of a non-empty subset of $Y$ where
$$
\mathrm{i}(Y,\alpha)=\{(X,\beta)\in {\mathcal{X}}\colon (Y,\alpha)\preceq (X,\beta)\}.
$$
\end{corollary}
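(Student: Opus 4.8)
The plan is to work in the concrete model ${\bf S}_X$, which by Theorem~\ref{th:5.8} is freely generated by $\overline{X}$ and hence isomorphic to $_{\mathcal L}{\bf SBIA}_X$, and to combine the theory of normal forms with a description of the $\mathcal D$-relation on ${\bf S}_X$. First I would pin down $\mathcal D$ on ${\bf S}_{\Omega}$ directly from the definition of $\wedge$: since $U\wedge V=\{(X,\alpha,A)\in U\colon (X,\alpha)\in p(U)\cap p(V)\}$, the equality $U\wedge V=U$ holds exactly when $p(U)\subseteq p(V)$, so that $U\mathrel{\mathcal D}V$ if and only if $p(U)=p(V)$. Consequently the canonical projection $\pi\colon {\bf S}_X\to {\bf S}_X/{\mathcal D}$ may be identified with the assignment $U\mapsto p(U)$, the quotient is identified with the family $\{p(U)\colon U\in {\bf S}_X\}$ of subsets of ${\mathcal X}$, and the GBA operations become the set operations, because a routine check of the definitions gives $p(U\vee V)=p(U)\cup p(V)$, $p(U\wedge V)=p(U)\cap p(V)$ and $p(U\setminus V)=p(U)\setminus p(V)$.

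Next I would identify the projection of an elementary element. From the computation carried out in the proof of Theorem~\ref{th:5.8}, the evaluation of $e(X_n,\alpha,A)$ on $\{i(x_1),\dots,i(x_n)\}$ is
$$
\{(X,\beta, A\!\uparrow_{\alpha}^{\beta})\in \Omega\colon (X_n,\alpha)\preceq (X,\beta)\},
$$
whose image under $p$ is precisely $\mathrm{i}(X_n,\alpha)$. Hence, under the identification above,
$$
\pi\big(e(X_n,\alpha,A)\big)=\mathrm{i}(X_n,\alpha),
$$
in agreement with the fact, recorded after Corollary~\ref{cor:12}, that the $\mathcal D$-class of $e(X_n,\alpha,A)$ does not depend on the marked block $A$.

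With these identifications in hand, the main argument is short. Take an arbitrary element $t$ of $_{\mathcal L}{\bf SBIA}_X/{\mathcal D}$ and write $t=\pi(s)$ for some $s\in {}_{\mathcal L}{\bf SBIA}_X$. As a term, $s$ involves only finitely many generators, so $s\in \langle X_n\rangle$ for some finite non-empty $X_n\subseteq X$. By part~\eqref{ii2} of Theorem~\ref{th:normal_forms} we may write $s=\vee{\mathcal E}$ for an admissible family ${\mathcal E}$ of elementary elements over $X_n$. Since $\pi$ is a homomorphism of the SBA reduct and thus preserves $\vee$, we obtain
$$
t=\pi(s)=\vee\{\pi(e)\colon e\in {\mathcal E}\}=\vee\{\mathrm{i}(X_n,\alpha)\colon e(X_n,\alpha,A)\in {\mathcal E}\},
$$
which exhibits $t$ as a finite join of elements of the required form, with $Y=X_n$ finite and $\alpha$ a partition of a non-empty subset of $Y$.

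The only genuinely technical point — and it is a mild one — is the first step: correctly computing the $\mathcal D$-relation on the concrete algebra ${\bf S}_X$ and checking that $\pi$ intertwines $\vee$ with the union of $p$-images, so that the image of the normal form of $s$ is literally the union $\bigcup_{e(X_n,\alpha,A)\in{\mathcal E}}\mathrm{i}(X_n,\alpha)$. Once this identification is in place, the result is an immediate consequence of the finiteness of the support of any term together with Theorem~\ref{th:normal_forms}; everything else is bookkeeping.
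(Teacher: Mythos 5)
Your proposal is correct and follows essentially the route the paper intends: the corollary is stated without proof as an immediate consequence of Theorem~\ref{th:5.8}, whose proof already exhibits the evaluation of $e(X_n,\alpha,A)$ in ${\bf S}_X$ as $\{(X,\beta,A\!\uparrow_{\alpha}^{\beta})\colon (X_n,\alpha)\preceq (X,\beta)\}$, so that combining this with the normal-form decomposition of Theorem~\ref{th:normal_forms}\eqref{ii2} and projecting to the quotient gives the claim. Your explicit verification that $\mathcal D$ on ${\bf S}_{\Omega}$ is exactly $p(U)=p(V)$ and that $p$ intertwines $\vee$, $\wedge$, $\setminus$ with the corresponding set operations is a correct filling-in of the details the paper leaves to the reader.
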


\subsection{Countable generating set} Let $X$ be a countable set. It was shown in \cite{KL} that $_{\mathcal L}{\bf SBA}_{X}/{\mathcal D}$, the free left-handed SBA over $X$, is isomorphic to ${\bf GBA}_{X}$. We now show, that in contrast to Corollary \ref{cor:free11},  also $_{\mathcal L}{\bf SBIA}_{X}/{\mathcal D}\simeq {\bf GBA}_{X}$.

A {\em Cantor set} is a totally disconnected metrizable compact space without isolated points. It is well-known that any two such spaces are homeomorphic and hence any of these spaces can be called `the Cantor set'.  A classification of all ultrametrics on a Cantor set was given by Michon \cite{Michon} (see also \cite{PB}). This result implies that the boundary of any Cantorian tree is homeomorphic to the Cantor set where a tree is {\em Cantorian} if it is rooted, locally finite (that is, each vertex has a finite number of children), has no dangling vertices (that is, vertices without children) and each vertex has a descendant with more than one child. The basis of the topology on the boundary of a Cantorian tree is formed by the sets $[v]$, where $v$ runs through the vertices of the tree, the set $[v]$ consisting of all points of the boundary which pass through $v$ (recall that points of the boundary are infinite paths $v_0v_1v_2......$ where $v_i$ belongs to the $i$th level of the tree for each $i$).

The Decomposition Rule for $_{\mathcal L}{\bf SBIA}_{X}/{\mathcal D}$, for $X={\mathbb N}$, leads to the construction of the {\em infinite partition tree} which can be looked at as the `partition analogue' of the Cantor tree.  Its level $0$ is just the empty partition of the empty set, and for each $i\geq 1$ vertices of level $i$ are partitions of subsets of $[i]=\{1,2,\dots, i\}$. A vertex $([i],\alpha)$ of level $i$ is connected with a vertex $([i+1],\beta)$ of level $i+1$ if and only if $([i],\alpha)\preceq ([i+1],\beta)$. The first four levels of the infinite partition tree are shown on Figure 1. Elements of the boundary of this tree may be identified with partitions of subsets of $\mathbb{N}$. The set $[([i],\alpha)]$, where $([i],\alpha)$ is a vertex, consists of all partitions $({\mathbb N},\beta)$ such that $([i],\alpha)\preceq ({\mathbb N},\beta)$. 

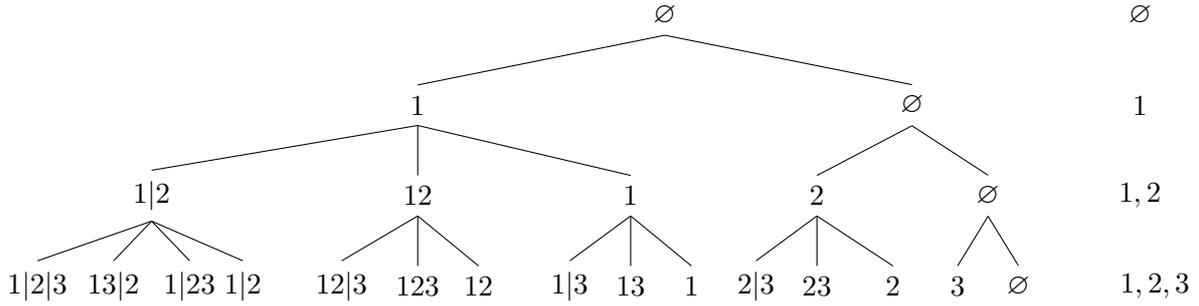
\begin{figure}
\begin{center}
\begin{tikzpicture}[level distance=1.2cm]

    \node (Root)  {$\varnothing$}
        child  [sibling distance=65 mm] {
        node {$1$}
        child [sibling distance=35 mm]{ node {$1|2$}
           child [sibling distance=10 mm] { node {$1|2|3$}}
           child [sibling distance=10 mm] { node {$13|2$}}
           child [sibling distance=10 mm] { node {$1|23$}}
           child [sibling distance=8 mm]{node {$1|2$}}
        }
        child [sibling distance=25 mm] { node {$12$}
           child  [sibling distance=10 mm] {node {$12|3$}}
           child [sibling distance=10 mm] {node {$123$}}
           child [sibling distance=8 mm] {node {$12$}}
        }
        child  [sibling distance=28 mm] { node {$1$}
            child  [sibling distance=8 mm] {node {$1|3$}}
           child [sibling distance=8 mm] {node {$13$}}
           child [sibling distance=8 mm] {node {$1$}}}
    }
    child  [sibling distance=65 mm] {
        node {$\varnothing$}
        child  [sibling distance=25 mm]{ node {$2$}
           child  [sibling distance=8 mm] {node {$2|3$}}
           child [sibling distance=8 mm] {node {$23$}}
           child [sibling distance=10 mm] {node {$2$}}
        }
        child  [sibling distance=20 mm] { node {$\varnothing$}
        child  [sibling distance=8 mm]  {node{$3$}}
        child  [sibling distance=8 mm]  {node {$\varnothing$}}
        }
    };
   
   \begin{scope}
     \path (Root    -| Root-2-2) ++(20mm,0) node {$\varnothing$};
     \path (Root-1  -| Root-2-2) ++(20mm,0) node {$1$};
     \path (Root-1-1-| Root-2-2) ++(20mm,0)  node {$1,2$};
     \path (Root-1-1-1-| Root-2-2-2) ++(18mm,0)  node {$1,2,3$};
   \end{scope}

\end{tikzpicture}
\end{center}
\caption{The first four levels of the infinite partition tree}
\end{figure}

A {\em locally compact Cantor set} is a Cantor set  with one point removed, with respect to the subspace topology. It is well known that the dual (under the classical Stone duality) Boolean algebra of the Cantor set is isomorphic to the free Boolean algebra on countably many generators, and the dual  generalized Boolean algebra of the locally compact Cantor set is isomorphic to the free generalized Boolean algebra on countably many generators. Thus, removing the rightmost path from the boundary of the infinite partition tree (which corresponds to the empty partition of the empty subset of ${\mathbb N}$) we obtain precisely the set ${\mathcal X}$ for $X={\mathbb N}$ and, moreover, the sets $[([i],\alpha)]$ are precisely the sets $\mathrm{i}([i],\alpha)$ from Corollary~\ref{cor:over_d}. Since, applying the Decomposition Rule, any element $\mathrm{i}(Y,\alpha)$ from Corollary~\ref{cor:over_d} can be written as an orthogonal join of elements $\mathrm{i}([i], \gamma)$ (with $i$ being the maximum element of $Y$), it follows that $_{\mathcal L}{\bf SBIA}_{X}/{\mathcal D}\simeq {\bf GBA}_{X}$.

\end{document}